\newcommand{\N}{\mathbb{N}}
\newcommand{\cF}{\mathcal{F}}
\newcommand{\cP}{\mathcal{P}}
\newcommand{\aut}{\mathrm{Aut}}
\newcommand{\se}{\subseteq}
\newcommand{\sm}{\setminus}
\newcommand{\id}{\mathrm{id}}
\newcommand{\per}{\mathrm{per}}
\newtheorem{thm}{Theorem}
\numberwithin{thm}{section}
\newtheorem{lemma}[thm]{Lemma}
\newtheorem{prop}[thm]{Proposition}
\newtheorem{cor}[thm]{Corollary}
\newtheorem*{conj}{Conjecture}
\theoremstyle{definition}
\newtheorem{defn}{Definition}
\numberwithin{defn}{section}
\newtheorem{alg}{Algorithm}
\numberwithin{alg}{section}
\theoremstyle{remark}
\newtheorem{rmk}{Remark}
\numberwithin{rmk}{section}
\begin{document}

\title{The road problem and homomorphisms of directed graphs}
\author{Sophie MacDonald}

\maketitle

\begin{abstract}
We make progress on a generalization of the road (colouring) problem. The road problem was posed by Adler-Goodwyn-Weiss and solved by Trahtman. The generalization was posed, and solved in certain special cases, by Ashley-Marcus-Tuncel. We resolve two new families of cases, of which one generalizes the road problem and follows Trahtman's solution, and the other generalizes a result of Ashley-Marcus-Tuncel with a proof quite different from theirs. Along the way, we prove a universal property for the fiber product of certain graph homomorphisms, which may be of independent interest. We provide polynomial-time algorithms for relevant constructions and decision problems.
\end{abstract}


\section{Introduction}\label{sec-intro}

\subsection{Background: from toral automorphisms to finite automata}

In this paper, we are concerned with a generalization of the road (colouring) problem. That problem, posed by Adler-Goodwyn-Weiss in \cite{agw-77-tms} and solved by Trahtman in \cite{ant-09-ijm}, asked whether every strongly connected, aperiodic directed graph of constant out-degree is the underlying graph of a synchronizing deterministic finite automaton (DFA). Trahtman's road colouring theorem (\cref{thm-rct-aper} below) gives an affirmative answer, and \cite{bf-11-aaecc,bp-14-dam} give a generalization to periodic graphs (\cref{thm-prct}).

The motivation for the road problem comes from ergodic theory. Specifically, a weak form of the road colouring theorem was used to prove the main theorem of \cite{agw-77-tms}, which gives a criterion for measure-theoretic isomorphism of certain Markov chains. This was the first time the road problem was explicitly posed, although the real origin of the problem is the earlier paper \cite{aw-67-pnas}, which concerns the analogous isomorphism problem for hyperbolic automorphisms of the two-dimensional torus. 

The present paper concerns a generalization of the road problem motivated by graph-theoretic invariants for a different, but related, isomorphism relation in ergodic theory. Specifically, Ashley-Marcus-Tuncel \cite{amt-95-etds} identified a graph-theoretic criterion for isomorphism of one-sided stationary Markov chains, implicit in \cite{bt-90-etds}, and gave a complete, effectively computable set of isomorphism invariants. They observed that a certain conjectural uniqueness result (the $O(G)$ conjecture, below in \S \ref{subsec-trans-stab-sync}) would, if proven, simplify the set of invariants, and proved the conjecture in a family of special cases. The conjecture reduces to a generalization of the road problem involving certain right-resolving graph homomorphisms (which we call \textit{right-resolvers}; see \S \ref{subsec-graph-homo-basic-defns}). For graphs of constant out-degree, these homomorphisms coincide with road colourings.

\subsection{Main ideas and contributions}

The main purpose of this paper is to present new results toward the $O(G)$ conjecture. There are two main ideas in the paper. The first idea concerns the \textit{stability relation} of a right-resolver, which was introduced in \cite{ckk-02-lncs} for DFAs or road colourings. For context, Kari \cite{jk-03-tcs} solved the road problem in the Eulerian case by finding, for a given graph, a road colouring with nontrivial (i.e. not merely diagonal) stability relation, then recursively finding a synchronizing road colouring of the strictly smaller quotient graph (in which the states are stability classes), and lifting it to the original graph. Trahtman's solution of the full road problem uses the same inductive strategy, paired with a more sophisticated technique for obtaining a colouring with a nontrivial stability relation. 

By determining how the stability relation behaves with respect to composition of right-resolvers, we are able to study the aforementioned recursive lifting constructions more systematically, allowing us to apply them toward the $O(G)$ conjecture. In particular, we adapt Trahtman's proof of the road colouring theorem to cover a larger family of cases (\cref{thm-bfc-cyc-bunch}), in which the out-degrees of states are allowed to vary cyclically. To do so, we generalize, to the setting of right-resolvers, a sufficient condition for a road colouring to have a nontrivial stable pair, based on the idea of a function graph with a unique tallest tree. This condition is at the heart of all proofs of the road colouring theorem to date, and its importance has motivated detailed analysis \cite{ber-16-lncs}.

The second main idea in this paper is a graph property that we call \textit{bunchiness}, along with a weaker property called \textit{almost bunchiness}. Bunchy and almost bunchy graphs are characterized by the property that the right-resolvers they admit are unique up to automorphisms in a certain sense (\cref{prop-alm-bunchy-equiv}). We highlight the implicit role of bunchiness both in \cite{amt-95-etds} and in the road colouring literature, and prove the $O(G)$ conjecture for bunchy and almost bunchy graphs (\cref{thm-alm-og-exists}). Furthermore, we show that the fiber product of right-resolving homomorphisms satisfies a universal property (\cref{thm-univ-prop}) that further highlights the essential role of bunchy graphs.  

Motivated by these results, we introduce a new conjecture, which we call the bunchy factor conjecture (see \S \ref{sec-conjec}), asserting essentially that the $O(G)$ conjecture can be proved using the stability approach that Trahtman used to prove the road colouring theorem, with bunchy graphs as the base of the recursion. Another way of articulating this conjecture is that the barrier to proving the $O(G)$ conjecture is our lack of a sufficiently general method of producing homomorphisms with nontrivial stability relation. The fact that the bunchy factor conjecture implies the $O(G)$ conjecture is made explicit in \cref{prop-bunchy-implies-o}, which relies primarily on \cref{prop-bunchy-same-o}, a uniqueness result that uses the universal property of the fiber product in an essential way.

\subsection{Organization of the paper}

In \S \ref{sec-graph-homo}, we recall and adapt standard material on graphs and homomorphisms. In \S \ref{sec-stab-sync}, we define the stability relation for a right-resolver, give its main structural properties, and relate it to synchronization. In \S \ref{sec-og-road}, we recall from \cite{amt-95-etds} the connection between the road problem and the $O(G)$ conjecture, and state our generalization of the road colouring theorem. 

In \S \ref{sec-bunchy}, we introduce the concepts of bunchiness and almost bunchiness and present results involving them, including the $O(G)$ conjecture for bunchy and almost bunchy graphs and the universal property of the fiber product. In \S \ref{sec-conjec}, we pose the bunchy factor conjecture, which has several equivalent formulations, and discuss its relation to the $O(G)$ conjecture and the road problem. In \S \ref{sec-comput}, we give polynomial-time algorithms for construction and decision problems involving right-resolvers, and discuss the algorithmic implications of the $O(G)$ and bunchy factor conjectures.

The proofs of many results in \S \S \ref{sec-graph-homo}--\ref{sec-comput}, comprising the structural properties of graphs and right-resolvers, are deferred to \S \ref{sec-proofs}. The proof of our generalization of the road colouring theorem is deferred to \S \ref{sec-traht}.


\section{Graphs and graph homomorphisms}\label{sec-graph-homo}

\subsection{Basic definitions}\label{subsec-graph-homo-basic-defns}

We take all graphs to be finite and directed. A graph $G$ consists of a set $V(G)$ of states, or vertices, and a set $E(G)$ of edges, together with a pair of maps $s, t: E(G) \to V(G)$ giving the source and target of each edge. Loops (edges $e$ with $s(e) = t(e)$) and parallel edges (distinct edges $e,e'$ with $s(e) = s(e')$, $t(e) = t(e')$) are allowed. For $I \in V(G)$, we write $E_I(G) = s^{-1}(I)$ for the set of outgoing edges from $I$. We write $F(I) = t(E_I(G))$ for the set of follower states of $I$, and we write $E_{IJ}(G) = s^{-1}(I) \cap t^{-1}(J)$ for the set of edges from $I$ to $J$. We write $L(G)$ for the language of $G$, i.e. the set of finite edge paths in $G$, i.e. $e_1 e_2 \dots e_n$ where $t(e_i) = s(e_{i+1})$, $1 \leq i \leq n-1$. We also refer to elements of $L(G)$ as \textit{words}. The maps $s,t$ extend to $s,t: L(G) \to V(G)$ by $s(e_1 \dots e_n) = s(e_1)$, $t(e_1 \dots e_n) = t(e_n)$. We define $L_I(G) = \{ u \in L(G) \, | \, s(u) = I  \}$, and $L_{IJ}(G) = \{ u \in L_I(G) \, | \, t(u) = J \}$. A \textit{cycle} in $G$ is a path $u \in L(G)$ with $s(u) = t(u)$, i.e. an element of $L_{II}(G)$ for some $I \in V(G)$.

A \textit{graph homomorphism} $\Phi: G \to H$ is a pair of maps $\Phi: E(G) \to E(H)$, $\partial \Phi: V(G) \to V(H)$ such that $s \circ \Phi = \partial \Phi \circ s$ and $t \circ \Phi = \partial \Phi \circ t$. If there is a surjective homomorphism from $G$ to $H$, then we say that $H$ is a \textit{factor} of $G$, and that $G$ is an \textit{extension} of $H$. Observe that every factor of a strongly connected graph is strongly connected. A graph homomorphism $\Phi: G \to H$ induces a map $L(G) \to L(H)$ (also written $\Phi$) in the obvious way.  

A graph isomorphism is a homomorphism that is injective and surjective (i.e. on both edges and states), and an automorphism of a graph $G$ is an isomorphism from $G$ to itself. We denote the group of automorphisms of $G$ by $\aut(G)$, and by $P(G)$ the (normal) subgroup of $\aut(G)$ which acts trivially on states and permutes parallel edges. We generally identify isomorphic graphs, and use the symbol $=$ to denote isomorphism, except when discussing algorithms for deciding isomorphism, or confirming that the automorphism group of a given graph is trivial.

We now introduce the class of homomorphisms with which we are concerned.

\begin{defn}[right-resolver]\label{defn-rr}
Let $G,H$ be graphs. Let $\Phi: G \to H$ be a surjective graph homomorphism. We say that $\Phi$ is \textit{right-resolving}, or is a \textit{right-resolver}, if, for each $I \in V(G)$, the restriction $\Phi|_{E_I(G)} : E_I(G) \to E_{\partial \Phi(I)} (H)$ is a bijection. We denote the set of right-resolvers $G \to H$ by $\hom_R(G,H)$, and we write $H \leq_R G$ if $\hom_R(G,H) \neq \emptyset$. 
\end{defn}

\begin{rmk}
The term ``right-resolving'' comes from symbolic dynamics, where the words are of primary importance and the actual graph is secondary. A graph homomorphism $\Phi: G \to H$ is right-resolving if and only if the associated map $\Phi: L(G) \to L(H)$ satisfies a certain condition on the symbols (edges) appearing to the right of a given symbol in a word. See \cite{amt-95-etds}, \S 8.2 for details if interested.
\end{rmk}

\begin{rmk}
The class of right-resolving graph homomorphisms is closed under composition. This reduces to the fact, applied to the outgoing edges from each state, that a composition of bijections is a  bijection. This means that the relation $\leq_R$ is transitive, and since the graphs are finite, it is clearly antisymmetric, so it is indeed a partial order on the set of all graphs (really on the set of equivalence classes of graphs up to isomorphism).
\end{rmk}

The following lemma is evident but we state it explicitly for future reference.

\begin{lemma}\label{lemma-fiber-determ-img}
The image of a right-resolver is determined up to graph isomorphism by the partition of the domain into fibers. That is, if $H_1, H_2 \leq_R G$ via $\Phi_i \in \hom_R(G,H_i)$, and for any $I_1, I_2 \in V(G)$ we have $\partial \Phi_1(I_1) = \partial \Phi_1(I_2)$ if and only if $\partial \Phi_2(I_1) = \partial \Phi_2(I_2)$, then in fact the $H_i$ are isomorphic.
\end{lemma}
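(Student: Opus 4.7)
The plan is to explicitly build an isomorphism $\Psi: H_1 \to H_2$, using the right-resolving property to define $\Psi$ on edges and the fiber hypothesis to define it on states.

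First, since $\Phi_1$ and $\Phi_2$ are surjective with identical fibers on $V(G)$, there is a unique bijection $f: V(H_1) \to V(H_2)$ with $f \circ \partial \Phi_1 = \partial \Phi_2$; this will be the state component of $\Psi$. For each state $A \in V(H_1)$, choose once and for all some $I_A \in V(G)$ with $\partial \Phi_1(I_A) = A$; by hypothesis $\partial \Phi_2(I_A) = f(A)$. Because $\Phi_1, \Phi_2$ are right-resolvers, the restrictions
\[
\Phi_1|_{E_{I_A}(G)} : E_{I_A}(G) \to E_A(H_1), \qquad \Phi_2|_{E_{I_A}(G)} : E_{I_A}(G) \to E_{f(A)}(H_2)
\]
are bijections, so their composite
\[
\Psi|_{E_A(H_1)} := \Phi_2|_{E_{I_A}(G)} \circ \bigl(\Phi_1|_{E_{I_A}(G)}\bigr)^{-1} : E_A(H_1) \to E_{f(A)}(H_2)
\]
is a bijection. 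As $A$ ranges over $V(H_1)$, the sets $E_A(H_1)$ partition $E(H_1)$ and the sets $E_{f(A)}(H_2)$ partition $E(H_2)$, so these pieces paste together to a bijection $\Psi: E(H_1) \to E(H_2)$.

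Next, I would verify that $(\Psi, f)$ is a graph homomorphism. The source condition $s \circ \Psi = f \circ s$ holds by construction: for $e \in E_A(H_1)$, $\Psi(e) \in E_{f(A)}(H_2)$, so both sides equal $f(A)$. For the target condition, take $e \in E_A(H_1)$, let $\tilde e \in E_{I_A}(G)$ be the preimage under $\Phi_1|_{E_{I_A}(G)}$, so $\Psi(e) = \Phi_2(\tilde e)$. Then
\[
t(\Psi(e)) = t(\Phi_2(\tilde e)) = \partial \Phi_2(t(\tilde e)) = f(\partial \Phi_1(t(\tilde e))) = f(t(\Phi_1(\tilde e))) = f(t(e)),
\]
using $f \circ \partial \Phi_1 = \partial \Phi_2$ in the middle equality. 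Since $f$ is a bijection on vertices and $\Psi$ is a bijection on edges, $(\Psi, f)$ is an isomorphism.

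The only mildly subtle point, which is not really an obstacle but is worth flagging, is that the bijection $\Psi|_{E_A(H_1)}$ depends on the choice of representative $I_A$ in the fiber over $A$; different choices will generally yield different isomorphisms $H_1 \to H_2$. The lemma only claims isomorphism, not a canonical one, so this is harmless, and any fixed system of choices produces a valid $\Psi$.
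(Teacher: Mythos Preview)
Your proof is correct. The paper does not actually prove this lemma: it introduces it with the sentence ``The following lemma is evident but we state it explicitly for future reference'' and gives no argument. Your explicit construction of the isomorphism via a choice of fiber representatives and the right-resolving bijections is the natural way to spell out the details, and your verification of the source and target conditions is clean. (For comparison, the paper later proves a closely related statement, \cref{lemma-same-fiber-aut}, by instead counting $|E_{IJ}|$ between pairs of states and choosing arbitrary bijections $E_{IJ}(H_1) \to E_{f(I)f(J)}(H_2)$; your approach via $\Phi_2 \circ (\Phi_1)^{-1}$ on outgoing edge sets is equally valid and arguably more direct.)
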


Note that the converse is not true: for a given $G,H$ with $H \leq_R G$, there may exist $\Phi, \Phi' \in \hom_R(G,H)$ with distinct partitions $\{ (\partial \Phi)^{-1}(I) \, | \, I \in V(H)  \}$, $\{ (\partial \Phi')^{-1}(I) \, | \, I \in V(H)  \}$ of $V(G)$. However, this cannot occur when $H$ is $\leq_R$-minimal:

\begin{thm}[\cite{amt-95-etds}, Theorem 3.2 and Corollary 3.3(a)]\label{thm-mg-sigma}
For any graph $G$, there exist a unique $\leq_R$-minimal graph $M(G) \leq_R G$ and a unique map $\Sigma_G : V(G) \to V(M(G))$ such that $\partial \Phi = \Sigma_G$ for any $\Phi \in \hom_R(G,M(G))$.
\end{thm}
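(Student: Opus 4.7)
The key is to reformulate everything in terms of partitions: call a partition $\pi$ of $V(G)$ \emph{right-resolvable} if for each pair of parts $P, Q \in \pi$ and each $v \in P$, the number of edges $e \in E_v(G)$ with $t(e) \in Q$ depends only on $(P, Q)$. A straightforward check shows that $\pi$ is the fiber partition $\{(\partial \Phi)^{-1}(I) : I \in V(H)\}$ of some $\Phi \in \hom_R(G,H)$ exactly when $\pi$ is right-resolvable, and then \cref{lemma-fiber-determ-img} determines $H$ up to isomorphism. So the theorem will follow if I can exhibit a coarsest right-resolvable partition $\pi^*$, together with the further property that the labeling of its parts by $V(M(G))$ is forced.

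The coarsest partition $\pi^*$ exists because right-resolvable partitions are closed under joins, which I would prove by a zigzag argument: for $v, v'$ in the same part of $\pi_1 \vee \pi_2$, chain them as $v = v_0, \ldots, v_n = v'$ with consecutive pairs in the same $\pi_1$- or $\pi_2$-part; right-resolvability of $\pi_i$ gives equal edge counts from consecutive chain elements into each $\pi_i$-part, and since each part of $\pi_1 \vee \pi_2$ is a union of $\pi_i$-parts, summing and transitivity across the chain finish the job. Define $M(G)$ as the quotient determined by $\pi^*$. To see that this is the $\leq_R$-minimum (not merely a minimal element), I would note that any $H \leq_R G$ has a right-resolvable fiber partition $\pi_H$ refining $\pi^*$, and that the induced partition on $V(H)$ is easily checked to be right-resolvable on $H$, producing a right-resolver $H \to M(G)$; uniqueness of $M(G)$ up to isomorphism is then immediate.

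For the uniqueness of $\Sigma_G$, any $\Phi_1, \Phi_2 \in \hom_R(G, M(G))$ share the fiber partition $\pi^*$, so $\partial \Phi_2 = \sigma \circ \partial \Phi_1$ for some permutation $\sigma$ of $V(M(G))$. Because both are right-resolvers with this same fiber partition, $\sigma$ preserves all edge counts $|E_{IJ}(M(G))|$ and so lifts to some $\alpha \in \aut(M(G))$ with $\partial \alpha = \sigma$ (extend over edges by picking arbitrary bijections $E_{IJ}(M(G)) \to E_{\sigma(I), \sigma(J)}(M(G))$). It therefore suffices to show that $\aut(M(G))$ acts trivially on $V(M(G))$. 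This I would prove by contradiction: given $\alpha$ with $\partial \alpha \neq \id$, the orbit partition of $\partial \alpha$ on $V(M(G))$ is right-resolvable on $M(G)$ (since edges from one orbit representative into any orbit $\mathcal{O}$ are put into bijection, by powers of $\alpha$, with the analogous edges from any other representative of the same orbit) and strictly coarser than the discrete partition; composing with $\Phi_1$ yields $K \leq_R G$ with $K <_R M(G)$, contradicting minimality. I expect this last argument---recognizing the orbit partition as the right invariant for exploiting the automorphism---to be the most delicate step; the zigzag and partition bookkeeping elsewhere are routine.
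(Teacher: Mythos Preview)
Your proposal is correct and follows essentially the same approach as the paper. Both arguments (i) identify right-resolving factors with partitions satisfying the constant-edge-count condition and obtain $M(G)$ via the finest common coarsening (your zigzag is the paper's \cref{prop-amt-right-meet}), and (ii) prove uniqueness of $\Sigma_G$ by lifting a fiber-preserving bijection on $V(M(G))$ to an automorphism (the paper's \cref{lemma-same-fiber-aut}) and then showing $\aut(M(G))$ acts trivially on states via an orbit-quotient contradiction. The only cosmetic difference is that you quotient by the cyclic group generated by a single nontrivial $\partial\alpha$, while the paper quotients by the full $\aut(M(G))$; either suffices.
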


The construction of $M(G)$ was first given in \cite{cg-70-acm}, though not in this notation. We discuss the proof of \cref{thm-mg-sigma} in \S \ref{subsec-right-air-mg}. The notion of $M(G)$, for a graph $G$, provides context for road colourings:

\begin{defn}[$M_D$ and road colourings]
For $D \geq 1$, let $M_D$ be the graph with a single state and $D$ self-loops. For a graph $G$ of constant out-degree $D$, a \textit{road colouring} of $G$ is a right-resolver $G \to M_D$.
\end{defn}

Note that each $M_D$ is $\leq_R$-minimal, and that $\hom_R(G,M_D)$ is nonempty if and only if $G$ has constant out-degree $D$, in which case $M(G) = M_D$.


\subsection{Subgraphs and connectedness}

A \textit{sink} in a graph $G$ is a state $I \in V(G)$ is a state with no outgoing edges, i.e. $F(I) = \emptyset$. We assume throughout that all graphs are sink-free; this is purely for convenience, as all of the results that do not require strong connectedness can be proved for graphs with sinks, with routine but tedious modifications to the proofs.  We say that a graph $G$ is \textit{strongly connected}, or irreducible, if for any ordered pair $I, J \in V(G)$, there is a (directed) edge path in $G$ from $I$ to $J$, i.e. $L_{IJ}(G) \neq \emptyset$. Note that strongly connected graphs are sink-free. The \textit{period} $\per(G)$ of a strongly connected graph $G$ is the gcd of its cycle lengths. 

A graph $H$ is a \textit{subgraph} of a graph $G$ if $E(H) \se E(G)$, $V(H) \se V(G)$, and the maps $s,t$ with respect to $H$ agree with their counterparts on $G$, restricted to $H$. An \textit{induced subgraph} of a graph $G$ is a subgraph $H$ such that $E_{IJ}(H) = E_{IJ}(G)$ for every $I,J \in V(H)$. A \textit{strong component} of a graph is a maximal strongly connected subgraph, i.e. a strongly connected subgraph that is not a proper subgraph of another strongly connected graph.

A \textit{principal subgraph} of a graph $G$ is a subgraph $H$ such that $E_I(H) = E_I(G)$ for every $I \in V(H)$. Note that every principal subgraph is induced. Note also that if $H$ is a principal subgraph of $G$, and $K$ is a principal subgraph of $H$, then $K$ is a principal subgraph of $G$. A \textit{principal component} is a strongly connected principal subgraph; note that the principal components are precisely the minimal principal subgraphs. In particular, any two principal components of a given graph have disjoint sets of states. The principal components of $G$ correspond to the sink states in the \textit{condensation} of $G$, which is the directed acyclic graph in which the states are the strong components, or maximal strongly connected subgraphs, of $G$, and with an edge $C_1 \to C_2$ in the condensation if there is an edge $I_1 \to I_2$ for $I_i \in V(C_i)$ in $G$.

Let $G, H$ be graphs with $H \leq_R G$, and let $\Phi \in \hom_R(G,H)$. Let $K$ be a subgraph of $G$. Note, by the right-resolving property, that in order for $\Phi|_K: K \to H$ to be surjective, it is necessary and sufficient that $\partial \Phi|_{V(K)}: V(K) \to V(H)$ be surjective and that $K$ be a principal subgraph of $G$.

\begin{rmk}
The road problem and the $O(G)$ problem were both originally raised for strongly connected graphs, which is a natural restriction given the origins of both problems in the ergodic theory of stationary Markov chains. Moreover, strong connectedness is used in an important way in a lemma used to prove both the road colouring theorem and the almost bunchy case of the bunchy factor conjecture. This is why the $O(G)$ conjecture and the bunchy factor conjecture are stated only for strongly connected graphs. 

However, it is quite natural from an automata-theoretic perspective, especially concerning computational complexity, to consider graphs that are not strongly connected. For instance, Eppstein \cite{de-90-siam} shows that it is NP-complete to determine whether the minimum length of a synchronizing word for a given synchronizing DFA is at most some given value. Eppstein's examples are not strongly connected. There are also several graph problems from symbolic dynamics \cite{fc-21-arxiv} that are NP-complete in general but have polynomial-time algorithms in the strongly connected case.
\end{rmk}


\section{Stability and synchronization}\label{sec-stab-sync}

\subsection{Transitions, stability, and synchronization}\label{subsec-trans-stab-sync}

A right-resolver on a graph $G$ induces transition maps on $V(G)$ in the standard way:

\begin{defn}[transition map]
Let $G, H$ be graphs with $H \leq_R G$. Let $\Phi \in \hom_R(G,H)$. For $I \in V(G)$ and $u \in L_{\partial \Phi(I)}(H)$, we write $I \cdot u$ for the terminal state $t(\gamma)$ of the unique $\gamma \in L_I(G)$ with $\Phi(\gamma) = u$. That is, $I \cdot u = t( (\Phi|_{L_I(G)} )^{-1}(u))$. We denote by $S_{\Phi}$ the set of maps of the form $I \mapsto I \cdot u$ with respect to $\Phi$.
\end{defn}

We now introduce the notion of a \textit{congruence} (see \cite{bpr-09-ca}, Chapter 1, or \cite{jk-03-tcs}, \S 3), of which we will see two important examples. The main example will be the stability relation, but we will also use a congruence in \cref{prop-construct-bg} to construct the maximal bunchy factor $B(G)$ of a given graph $G$.

\begin{defn}[congruences and quotients]
Let $G,H$ be graphs with $H \leq_R G$, let $\Phi \in \hom_R(G,H)$, and let $\sim$ be an equivalence relation on $V(G)$. We say that $\sim$ is a \textit{congruence} with respect to $\Phi$ if it is invariant under transitions, i.e. for all $I \in V(H)$, all $u \in L_I(H)$, and all $I_1, I_2 \in (\partial \Phi)^{-1}(I)$ with $I_1 \sim I_2$, we have $I_1 \cdot u \sim I_2 \cdot u$. We ``overload'' a congruence $\sim$ by defining it also on paths (in particular, edges), by saying that $\gamma_1 \sim \gamma_2$, for $\gamma_1, \gamma_2 \in L(G)$, if $\Phi(\gamma_1) = \Phi(\gamma_2)$ and $s(\gamma_1) \sim s(\gamma_2)$. Define the \textit{quotient graph} $G/\sim$ by $V(G/\sim) = V(G)/\sim$, $E(G/\sim) = E(G)/\sim$, $s([e]_{\sim}) = [s(e)]_{\sim}$, and $t([e]_{\sim}) = [t(e)]_{\sim}$. 
\end{defn}

\begin{rmk}
Let $G,H$ be graphs with $H \leq_R G$, let $\Phi \in \hom_R(G,H)$, and let $\sim$ be a congruence on $G$ with respect to $\Phi$. Observe that there are right-resolvers $G \to G/\sim$, $G/\sim \, \to H$ which compose to $\Phi$, where the right-resolver $G \to G/\sim$ is the quotient map, and the right-resolver $G/\sim \, \to H$ takes a $\sim$ class to the image in $H$ of any of its representatives.
\end{rmk}

\begin{rmk}
The coarsest congruence, with respect to a right-resolver $\Phi \in \hom_R(G,H)$, is the total relation on the fibers, i.e. the relation $\bigsqcup_{I \in V(H)} ((\partial \Phi)^{-1}(I))^2$. The quotient of $G$ by this relation is simply $H$, with $\Phi$ as the quotient map.
\end{rmk}

\begin{defn}[stability relation for a right-resolver]
Let $G,H$ be graphs with $H \leq_R G$ and let $\Phi \in \hom_R(G,H)$. The \textit{stability relation} for $\Phi$, written $\sim_{\Phi}$, is the equivalence relation on $V(G)$ defined as follows: for $I \in V(H)$ and $I_1, I_2 \in (\partial \Phi)^{-1}(I)$, $I_1 \sim_{\Phi} I_2$ if and only if for all $u \in L_{I}(H)$, there exists $v \in L_{t(u)}(H)$ such that $I_1 \cdot uv = I_2 \cdot uv$.
\end{defn}

\begin{lemma}
Let $G,H$ be graphs with $H \leq_R G$ and let $\Phi \in \hom_R(G,H)$. The stability relation $\sim_{\Phi}$ is a congruence with respect to $\Phi$.
\end{lemma}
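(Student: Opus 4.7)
The substance of the lemma is the congruence property: given $I \in V(H)$, given $I_1, I_2 \in (\partial\Phi)^{-1}(I)$ with $I_1 \sim_\Phi I_2$, and given $u \in L_I(H)$, we must show $I_1 \cdot u \sim_\Phi I_2 \cdot u$. The underlying idea is simply associativity of the transition action: extending the suffix in the stability definition by $u$ on the left reduces everything to a single application of the hypothesis.

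First I would record two basic facts about transitions induced by a right-resolver $\Phi$, both of which follow immediately from the bijections $\Phi|_{E_I(G)}: E_I(G) \to E_{\partial\Phi(I)}(H)$ by induction on word length. (i) For any $I' \in V(G)$ and any $x \in L_{\partial\Phi(I')}(H)$, one has $\partial\Phi(I' \cdot x) = t(x)$, so transitions move between the expected fibers. (ii) For $x \in L_{\partial\Phi(I')}(H)$ and $y \in L_{t(x)}(H)$, one has $I' \cdot (xy) = (I' \cdot x) \cdot y$: both sides are the terminal state of the unique lift of $xy$ starting at $I'$, which is the concatenation of the lifts of $x$ and of $y$. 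In particular, by (i), both $I_1 \cdot u$ and $I_2 \cdot u$ lie in $(\partial\Phi)^{-1}(t(u))$, so the relation $\sim_\Phi$ at least has a chance to hold between them.

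Now fix an arbitrary $w \in L_{t(u)}(H)$; I need to produce $v \in L_{t(w)}(H)$ with $(I_1 \cdot u) \cdot wv = (I_2 \cdot u) \cdot wv$. The word $uw$ belongs to $L_I(H)$, so the stability hypothesis $I_1 \sim_\Phi I_2$ applied to $uw$ yields a $v \in L_{t(uw)}(H) = L_{t(w)}(H)$ such that $I_1 \cdot (uw)v = I_2 \cdot (uw)v$. Using associativity (ii) twice, this rewrites as $(I_1 \cdot u) \cdot wv = (I_2 \cdot u) \cdot wv$, which is exactly what was needed. Since $w$ was arbitrary, $I_1 \cdot u \sim_\Phi I_2 \cdot u$, and congruence is established.

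If desired I would close by checking that $\sim_\Phi$ really is an equivalence relation, although the definition advertises it as such: reflexivity is immediate from any choice of $v$, symmetry is built into the symmetric role of $I_1, I_2$ in the definition, and transitivity uses the same associativity trick as above (if $v$ witnesses $I_1 \sim_\Phi I_2$ at $u$ and $v'$ witnesses $I_2 \sim_\Phi I_3$ at $uv$, then $vv'$ witnesses $I_1 \sim_\Phi I_3$ at $u$). I do not anticipate any real obstacle; the entire argument is powered by associativity of the transition action and the fact that $\Phi$ induces unique lifts of words.
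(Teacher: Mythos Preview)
Your proof is correct and takes essentially the same approach as the paper: pick an arbitrary continuation word from $t(u)$, prepend $u$, and apply the stability hypothesis at $I$ to produce the required synchronizing suffix. The paper's version is terser (it omits the explicit associativity and fiber-preservation remarks and the equivalence-relation check), and uses the variable names $v,w$ in the opposite roles, but the argument is identical.
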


\begin{proof}
Let $I \in V(H)$ and $I_1, I_2 \in (\partial \Phi)^{-1}(I)$ with $I_1 \sim_{\Phi} I_2$. Let $u \in L_I(H)$ and let $v \in L_{t(u)}(H)$. Since $I_1 \sim_{\Phi} I_2$, there exists $w \in L_{t(v)}(H)$ such that $I_1 \cdot uvw = I_2 \cdot uvw$. Therefore $I_1 \cdot u \sim_{\Phi} I_2 \cdot u$, so $\sim_{\Phi}$ is indeed a congruence.
\end{proof}

We now define \textit{synchronizing} right-resolvers in terms of the stability relation, then show in \cref{prop-stab-sync-equiv} that, at least in the strongly connected case, this definition is equivalent to a more obvious notion of synchronization for a right-resolver.

\begin{defn}[synchronizer]
Let $G,H$ be graphs with $H \leq_R G$ and let $\Phi \in \hom_R(G,H)$. We say that $\Phi$ is \textit{synchronizing}, or is a \textit{synchronizer}, if each fiber $(\partial \Phi)^{-1}(I)$, $I \in V(H)$, is a $\sim_{\Phi}$ class. We denote the set of synchronizers $G \to H$ by $\hom_S(G,H)$, and we write $H \leq_S G$ if $\hom_S(G,H) \neq \emptyset$.
\end{defn}

Note that $\sim_{\Phi}$ depends on $\Phi$ only through $S_{\Phi}$. That is, if $\Phi, \Phi'$ are such that $S_{\Phi} = S_{\Phi'}$, then $\sim_{\Phi} = \sim_{\Phi'}$.

\begin{prop}\label{prop-stab-sync-equiv}
Let $G,H$ be graphs with $H \leq_R G$ and let $\Phi \in \hom_R(G,H)$. Then $\Phi$ is synchronizing if and only if for every $I \in V(H)$, there exists $u \in L_I(H)$ with $|(\partial \Phi)^{-1}(I) \cdot u| = 1$.
\end{prop}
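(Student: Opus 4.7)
The plan is to prove each direction separately; only the forward implication requires substantive work.

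For the reverse direction $(\Leftarrow)$, I would assume that for each $J \in V(H)$ there is $u_J \in L_J(H)$ with $|(\partial \Phi)^{-1}(J) \cdot u_J| = 1$, and verify the stability of every pair in a common fiber. Given $I_1, I_2 \in (\partial \Phi)^{-1}(I)$ and an arbitrary $u \in L_I(H)$, set $J = t(u)$; then $I_1 \cdot u$ and $I_2 \cdot u$ both lie in $(\partial \Phi)^{-1}(J)$, and taking $v = u_J$ forces $I_1 \cdot uv = I_2 \cdot uv$. Hence $I_1 \sim_\Phi I_2$, every fiber is a single stability class, and $\Phi$ is synchronizing.

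For the forward direction $(\Rightarrow)$, I would fix $I \in V(H)$, set $F_0 = (\partial \Phi)^{-1}(I)$ and $J_0 = I$, and build a collapsing word for $F_0$ by iterated pairwise merging. At stage $k$, assume $F_k \subseteq (\partial \Phi)^{-1}(J_k)$ has been obtained as $F_0 \cdot w_1 \cdots w_k$ with $t(w_1 \cdots w_k) = J_k$. While $|F_k| \geq 2$, pick distinct $K_1, K_2 \in F_k$; the synchronizing hypothesis gives $K_1 \sim_\Phi K_2$, so by the definition of stability there is a word $w_{k+1} \in L_{J_k}(H)$ with $K_1 \cdot w_{k+1} = K_2 \cdot w_{k+1}$. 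Set $F_{k+1} = F_k \cdot w_{k+1}$ and $J_{k+1} = t(w_{k+1})$. Since transitions are functions, $|F_{k+1}| \leq |F_k| - 1$, so the process terminates with $|F_k| = 1$ within $|F_0| - 1$ steps, and $u = w_1 \cdots w_k$ is the desired collapsing word.

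The one subtlety is that at each stage the chosen pair $K_1, K_2$ must actually be $\sim_\Phi$-equivalent, so that the merging word $w_{k+1}$ is guaranteed to exist. This is where the lemma that $\sim_\Phi$ is a congruence with respect to $\Phi$ is essential: every element of $F_k$ has the form $J \cdot w_1 \cdots w_k$ for some $J \in F_0$, and the synchronizing hypothesis makes all such $J$ pairwise $\sim_\Phi$-equivalent, so the congruence property transfers that equivalence to $F_k$. The real conceptual content of the proposition is thus the passage from the pointwise guarantee built into stability (each pair can eventually be sent to a common state) to a single word collapsing an entire fiber at once, and this is delivered cleanly by the pairwise-merge induction.
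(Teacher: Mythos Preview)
Your proof is correct and follows essentially the same approach as the paper's: iterated pairwise merging for the forward direction, and the direct collapsing argument for the reverse direction. One minor simplification: your invocation of the congruence lemma in the forward direction is unnecessary---since you already note that $F_k \subseteq (\partial\Phi)^{-1}(J_k)$, and the synchronizing hypothesis makes that entire fiber a single $\sim_\Phi$ class, any two elements of $F_k$ are $\sim_\Phi$-equivalent directly, without tracing equivalence back to $F_0$ via the congruence.
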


\begin{proof}
First, suppose that $\Phi$ is synchronizing and let $I \in V(H)$. If $| (\partial \Phi)^{-1}(I) | = 1$, then we are done. Otherwise, there exist at least two distinct states $I_1, I_2 \in (\partial \Phi)^{-1}(I)$, and $I_1 \sim_{\Phi} I_2$ since $(\partial \Phi)^{-1}(I)$ is a $\sim_{\Phi}$ class by assumption. Therefore, there exists $u_1 \in L_I(H)$ with $I_1 \cdot u_1 = I_2 \cdot u_1$. In particular, $|(\partial \Phi)^{-1}(I) \cdot u_1| < | (\partial \Phi)^{-1}(I)|$. Continuing inductively, we can produce a sequence of words $u_1, \dots, u_n$ such that $t(u_i) = s(u_{i+1})$ and $| (\partial \Phi)^{-1}(I) \cdot u_1 \cdots u_n | = 1$. This proves the first claim.

For the converse, let $I \in V(H)$ and $I_1, I_2 \in (\partial \Phi)^{-1}(I)$. We will show that $I_1 \sim_{\Phi} I_2$. Let $v \in L_I(H)$ be arbitrary. By the assumption that each fiber can be collapsed to a single state, let $w \in L_{t(v)}(H)$ be such that $| (\partial \Phi)^{-1}(t(v)) \cdot w  | = 1$. Then $| (\partial \Phi)^{-1}(I) \cdot vw| = 1$; in particular, $I_1 \cdot vw \sim_{\Phi} I_2 \cdot vw$. Therefore $(\partial \Phi)^{-1}(I)$ is a $\sim_{\Phi}$ class, and $\Phi$ is synchronizing.
\end{proof}

We now summarize the structure of stability, in the sense of its behaviour with respect to composition of right-resolvers. For the proof of \cref{thm-struct-sync-comp}, see \S \ref{subsec-stab-proofs}.

\begin{thm}\label{thm-struct-sync-comp}
Let $G,K,H$ be graphs with $H \leq_R K \leq_R G$. Let $\Psi \in \hom_R(G,K)$, $\Delta \in \hom_R(K,H)$, and let $\Phi = \Delta \circ \Psi$.

\begin{enumerate}
    \item The $\sim_{\Psi}$ classes in $V(G)$ are the intersections of $\sim_{\Phi}$ classes with $\partial \Psi$ fibers. In particular, $\Psi$ is synchronizing if and only if every $\partial \Psi$ fiber is contained in a $\sim_{\Phi}$ class.
    
    \item If $K = G/\sim_{\Phi}$ and $\Psi$ is the quotient map for $\sim_{\Phi}$, then $\Psi$ is synchronizing and $\sim_{\Delta}$ is trivial.
    
    \item If $\sim_{\Delta}$ is trivial, then $\sim_{\Phi} = \sim_{\Psi}$.
    
    \item $\Phi$ is synchronizing if and only if both $\Psi$ and $\Delta$ are synchronizing.
\end{enumerate}
\end{thm}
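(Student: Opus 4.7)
The plan is to unify all four parts through one key observation: for $I \in V(G)$ and $u \in L_{\partial\Psi(I)}(K)$, the state $I \cdot u$ computed via $\Psi$ equals the state $I \cdot \Delta(u)$ computed via $\Phi$, because the unique $\Psi$-lift of $u$ starting at $I$ is the same path in $G$ as the unique $\Phi$-lift of $\Delta(u)$ starting at $I$. Combined with the fact that, by the right-resolving property of $\Delta$, the map $\Delta \colon L_J(K) \to L_{\partial\Delta(J)}(H)$ is a bijection for every $J \in V(K)$, this yields a translation dictionary between the defining conditions of $\sim_\Psi$ and of $\sim_\Phi$.

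Part (1) then follows by direct translation. For $I_1, I_2$ in a common $\partial\Psi$-fiber over some $J \in V(K)$, one verifies $I_1 \sim_\Psi I_2 \Leftrightarrow I_1 \sim_\Phi I_2$ by interchanging $u \in L_J(K)$ with $u' = \Delta(u) \in L_{\partial\Delta(J)}(H)$ and doing the same for the witnesses. The ``in particular'' clause is then immediate: $\Psi$ is synchronizing iff every $\partial\Psi$-fiber is a single $\sim_\Psi$-class iff every $\partial\Psi$-fiber is contained in a single $\sim_\Phi$-class.

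For part (2), when $\Psi$ is the quotient map for $\sim_\Phi$, the $\partial\Psi$-fibers are by construction the $\sim_\Phi$-classes, so synchronization of $\Psi$ follows from (1). For triviality of $\sim_\Delta$, suppose $[I_1] \sim_\Delta [I_2]$ in $V(K)$. Given any $u' \in L_{\partial\Delta[I_1]}(H)$, the hypothesis supplies $v'$ with $[I_1 \cdot u'v'] = [I_2 \cdot u'v']$ in $V(K)$, meaning $I_1 \cdot u'v' \sim_\Phi I_2 \cdot u'v'$; applying the defining condition of $\sim_\Phi$ once more yields a further extension $w'$ with $I_1 \cdot u'v'w' = I_2 \cdot u'v'w'$. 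Hence $I_1 \sim_\Phi I_2$, i.e.\ $[I_1] = [I_2]$.

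Part (3) reduces, via (1), to showing that $I_1 \sim_\Phi I_2$ implies $\partial\Psi(I_1) = \partial\Psi(I_2)$, and this follows because pushing the defining $\sim_\Phi$-witnesses down by $\partial\Psi$ exhibits $\partial\Psi(I_1) \sim_\Delta \partial\Psi(I_2)$, whence equality by triviality of $\sim_\Delta$. Part (4) is handled through \cref{prop-stab-sync-equiv}: in one direction, a $\Delta$-collapsing word for $(\partial\Delta)^{-1}(I) \subseteq V(K)$ drives $(\partial\Phi)^{-1}(I) \subseteq V(G)$ into a single $\partial\Psi$-fiber, which is then crushed by a lifted $\Psi$-collapsing word; in the other, a $\Phi$-collapsing word projects to a $\Delta$-collapsing word and also $\Psi$-collapses any $\partial\Psi$-fiber it reaches. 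The only real obstacle throughout is bookkeeping — keeping straight which of $\Psi, \Delta, \Phi$ acts at each step, and translating words through $\Delta$ in the correct direction — but once the dictionary of the first paragraph is in place, the four assertions fall out almost mechanically.
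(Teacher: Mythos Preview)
Your proposal is correct and takes essentially the same route as the paper: your translation dictionary is exactly the content of \cref{lemma-stab-comp}, and parts (2)--(4) are derived from it just as the paper does (part (3) via what the paper isolates as \cref{lemma-pullback-stab}). The only cosmetic differences are that in part (2) you extract the final collapsing word directly from the definition of $\sim_\Phi$ rather than from the just-established synchronization of $\Psi$, and in part (4) you argue with fiber-collapsing words via \cref{prop-stab-sync-equiv} whereas the paper works pairwise from the stability definition; both variants are equally short.
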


The following observation follows immediately from \cref{thm-struct-sync-comp}(4).

\begin{cor}
The relation $\leq_S$ is transitive, and is thus a partial order on the class of graphs (again, really isomorphism classes of graphs), refining the partial order $\leq_R$.
\end{cor}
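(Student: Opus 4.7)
The plan is to read the corollary off directly from part (4) of \cref{thm-struct-sync-comp}, since that part is exactly the statement that synchronization is preserved under composition of right-resolvers in both directions. Concretely, for transitivity, suppose $H \leq_S K \leq_S G$, and pick witnesses $\Delta \in \hom_S(K,H)$ and $\Psi \in \hom_S(G,K)$. Because right-resolvers compose to right-resolvers, $\Phi := \Delta \circ \Psi$ lies in $\hom_R(G,H)$, and applying \cref{thm-struct-sync-comp}(4) to this triple immediately yields that $\Phi$ is synchronizing, so $H \leq_S G$. This is the only substantive step, and its entire content is outsourced to part (4).

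To upgrade transitivity to the full partial order assertion, I would note the following easy facts. Reflexivity: the identity homomorphism $\id_G$ is a right-resolver, its stability relation is trivial (each fiber $(\partial \id_G)^{-1}(I) = \{I\}$ is already a singleton), and hence each fiber is vacuously a $\sim_{\id_G}$ class, so $\id_G \in \hom_S(G,G)$. Antisymmetry and the refinement claim both follow from the inclusion $\hom_S(G,H) \subseteq \hom_R(G,H)$, which is immediate from the definition of a synchronizer: any relation of $\leq_S$ implies the corresponding relation of $\leq_R$, so $\leq_S$ refines $\leq_R$, and in particular inherits antisymmetry from the already established partial order $\leq_R$.

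There is no real obstacle in the argument; all of the difficulty sits inside \cref{thm-struct-sync-comp}(4), whose proof is deferred to \S \ref{subsec-stab-proofs}. The corollary is essentially a packaging of that part into an order-theoretic statement.
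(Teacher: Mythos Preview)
Your argument is correct and is exactly the approach the paper takes: the corollary is stated as following immediately from \cref{thm-struct-sync-comp}(4), and you have simply spelled out the routine details (reflexivity via the identity, antisymmetry and refinement via $\hom_S \subseteq \hom_R$) that the paper leaves implicit.
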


\begin{conj}[$O(G)$ conjecture, Question 4.6 in \cite{amt-95-etds}]\label{main-conj}
Let $G$ be a strongly connected graph. Then the set of graphs $H$ with $H \leq_S G$ has a unique $\leq_S$-minimal element $O(G)$. 
\end{conj}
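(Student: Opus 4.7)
The conjecture splits into existence and uniqueness. Existence is routine: $\{H : H \leq_S G\}$ is nonempty (it contains $G$) and finite up to isomorphism (the states of any such $H$ biject with the blocks of some congruence on $V(G)$), so, since $\leq_S$ is a partial order by the corollary to \cref{thm-struct-sync-comp}, $\leq_S$-minimal elements exist. All the real content is uniqueness: if $H_1, H_2 \leq_S G$ are both $\leq_S$-minimal, then $H_1 \cong H_2$.

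I would split uniqueness into two independent sub-problems by introducing the auxiliary class $\cC$ of ``bunchy'' graphs flagged in the introduction, designed so that (A) any two members of $\cC$ that admit synchronizers from the same strongly connected $G$ are isomorphic, and (B) every strongly connected graph admits a synchronizer to some member of $\cC$. Given (A) and (B), uniqueness follows quickly: any $\leq_S$-minimal $H$ is strongly connected (being a factor of strongly connected $G$), so (B) yields a synchronizer $H \to B$ with $B \in \cC$; transitivity of $\leq_S$ and $\leq_S$-minimality of $H$ then force $H \cong B$, so $H \in \cC$; applying (A) to the two resulting minima closes the argument.

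For (A) I would exploit \cref{lemma-fiber-determ-img}, reducing the claim to showing that two synchronizers $\Phi_i : G \to B_i$ into bunchy targets induce the same partition of $V(G)$ into fibers. The natural device is a right-resolving fiber product of $\Phi_1$ and $\Phi_2$ formed over their common $\leq_R$-minimum $M(G)$ supplied by \cref{thm-mg-sigma}: this fiber product should carry a canonical right-resolver from $G$ through which both $\Phi_i$ factor, and the rigidity built into the bunchy property should force the two projections out of it to induce isomorphic fiber partitions, whence the $\Phi_i$ do too. Synchronization bookkeeping along both legs is handled by \cref{thm-struct-sync-comp}(1) and (4).

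For (B) --- the main obstacle --- I would imitate the inductive stability strategy of Kari and Trahtman for the road colouring theorem, lifted to the right-resolver setting. Given $G \notin \cC$, produce a right-resolver $\Psi : G \to L$ whose stability relation $\sim_{\Psi}$ is nontrivial; pass to the strict quotient $G/{\sim_{\Psi}}$, whose quotient map is synchronizing by \cref{thm-struct-sync-comp}(2); recurse on the smaller (still strongly connected) quotient, terminating when the current graph lies in $\cC$; and compose the resulting chain of synchronizers using \cref{thm-struct-sync-comp}(4). The crux, and the point at which the combinatorial difficulty of the original road problem must be absorbed, is manufacturing such a $\Psi$ with a nontrivial stable pair whenever $G$ is not bunchy --- a right-resolver analogue of Trahtman's ``tallest tree''/function-graph lemma. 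I expect this to be the hard step, as it is where every known proof of the road colouring theorem concentrates its combinatorial work, and this is precisely the ingredient that (per the introduction) the paper leaves open in full generality as the bunchy factor conjecture.
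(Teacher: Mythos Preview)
The statement you are asked to prove is labelled \emph{Conjecture} for a reason: the paper does not prove it, and indeed presents it as open. What the paper does prove is exactly the reduction you outline --- that the $O(G)$ conjecture follows from the bunchy factor conjecture (\cref{prop-bunchy-implies-o}, via \cref{prop-bunchy-same-o} and \cref{prop-bfc-equiv-new}). Your plan is thus not a proof but a faithful reconstruction of the paper's conditional argument, and you correctly identify step~(B) --- finding, for every non-bunchy strongly connected $G$, some $\Phi \in \hom_R(G,M(G))$ with nontrivial $\sim_\Phi$ --- as the missing ingredient. That step is precisely statement~(3) of \cref{prop-bfc-equiv-new}, i.e.\ the bunchy factor conjecture itself, which the paper establishes only in the almost bunchy case (\cref{prop-bfc-alm}) and the cycle-of-bunches case (\cref{thm-bfc-cyc-bunch}). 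So your proposal has a genuine gap, namely the same gap the paper has; you should not present this as a proof.

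One correction to your formulation of~(A): it is not true that any two bunchy $H_1, H_2 \leq_S G$ are isomorphic --- a bunchy $G$ and its strictly smaller $O(G)$ give a counterexample. What the paper proves (\cref{prop-bunchy-same-o}) is that any two such $H_i$ satisfy $O(H_1) = O(H_2)$, using \cref{thm-alm-og-exists} to know $O(H_i)$ is well-defined for bunchy $H_i$. In your deduction this does not matter, since you only apply~(A) to $\leq_S$-minimal bunchy graphs, for which $O(H_i) = H_i$; but the statement of~(A) should be weakened accordingly. Your sketch of~(A) via the fiber product over $M(G)$ matches the paper's \cref{thm-univ-prop} and \cref{lemma-fiber-bunchy}.
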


\begin{rmk}
This remark is intended for readers interested in algebraic or categorical perspectives on automata theory. Recall that for a complete DFA, or road colouring $\Phi \in \hom_R(G,M_D)$, where $G$ is a graph of constant out-degree $D$, the set $S_{\Phi}$ of transition maps forms a transformation semigroup under composition. Indeed, a complete DFA is essentially a finite transformation semigroup together with a choice of generators; this perspective is taken explicitly in \cite{acs-17-prim,bf-11-aaecc} and mentioned in \cite{go-81-ijm}, the first paper on the road problem after \cite{agw-77-tms}. 

For a general right-resolver $\Phi \in \hom_R(G,H)$, one could see $S_{\Phi}$ as the semigroup of transitions of a partial finite automaton (PFA), where a given transition is defined only on a single fiber. However, it is more helpful to see $S_{\Phi}$ as a \textit{semigroupoid} (equivalently, if the empty word is included, a small category). One reason is that, as we show in \S \ref{sec-comput}, it can be decided in polynomial time whether $\Phi$ is synchronizing, and the length of a word synchronizing a given fiber is bounded by a polynomial in $|V(G)|$. This is in contrast to the high level of complexity typical of related problems in subset synchronization and synchronization of PFAs \cite{mb-14-dlt,vorel-16-ijfcs}.

The reader may verify as an exercise, generalizing Cayley's theorem or specializing the Yoneda lemma, that every finite semigroupoid is isomorphic to $S_{\Phi}$ for some graphs $G,H$ (although possibly with sinks) and some $\Phi \in \hom_R(G,H)$, with appropriate generalizations to the infinite case. Moreover, just as every group is a quotient of a free group, $S_{\Phi}$ is a quotient of the free semigroupoid $L(H)$.
\end{rmk}


\subsection{Sufficient conditions for stability}

We now give a pair of sufficient conditions for nontrivial stability, both of which are used in the proof of the road colouring theorem and one of which is also used in \S \ref{sec-bunchy} to obtain a right-resolver with nontrivial stability on an almost bunchy graph.

The first condition involves a special case of the operation known as in-amalgamation (\cite{lm-95-intro}, \S 2.4):

\begin{lemma}\label{lemma-amalg-stab}
Let $G,H$ be graphs with $H \leq_R G$ and let $\Phi \in \hom_R(G,H)$ be a right-resolver. Let $I \in V(H)$. Suppose that there exist $I_1, I_2 \in (\partial \Phi)^{-1}(I)$ such that $|E_{I_1 J}(G)| = |E_{I_2 J}(G)|$ for all $J \in V(G)$. Then there exists $\Phi' \in \hom_R(G,H)$ such that $I_1 \sim_{\Phi'} I_2$.
\end{lemma}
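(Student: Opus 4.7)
The plan is to leave $\Phi$ unchanged on most edges and re-route only those emerging from $I_2$. By the hypothesis $|E_{I_1 J}(G)| = |E_{I_2 J}(G)|$ for every $J \in V(G)$, for each such $J$ I can choose a bijection $E_{I_1 J}(G) \to E_{I_2 J}(G)$, and take the disjoint union over $J$ to obtain a bijection $\beta \colon E_{I_1}(G) \to E_{I_2}(G)$ satisfying $t(\beta(e)) = t(e)$ for every $e \in E_{I_1}(G)$. I then define $\Phi'$ by setting $\partial \Phi' = \partial \Phi$, $\Phi'(e) = \Phi(e)$ for every $e$ with $s(e) \neq I_2$, and $\Phi'(\beta(e)) = \Phi(e)$ for every $e \in E_{I_1}(G)$. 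In words, $\Phi'$ agrees with $\Phi$ outside $E_{I_2}(G)$, and on $E_{I_2}(G)$ it is forced by transporting the labelling of $E_{I_1}(G)$ across $\beta$.

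The verifications are routine. For $\Phi'$ to be a graph homomorphism, the only case that is not inherited from $\Phi$ is $e' = \beta(e) \in E_{I_2}(G)$: the source condition holds since $s(\Phi(e)) = \partial \Phi(I_1) = I = \partial \Phi(I_2) = \partial \Phi'(s(e'))$, and the target condition holds precisely because $\beta$ preserves targets, giving $\partial \Phi'(t(e')) = \partial \Phi(t(\beta(e))) = \partial \Phi(t(e)) = t(\Phi(e)) = t(\Phi'(e'))$. For the right-resolving property, $\Phi'|_{E_{I'}(G)}$ is unchanged from $\Phi$ for every $I' \neq I_2$, while $\Phi'|_{E_{I_2}(G)} = \Phi|_{E_{I_1}(G)} \circ \beta^{-1}$ is a composition of bijections onto $E_I(H)$. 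Surjectivity of $\Phi'$ onto $H$ is preserved because the image of $\Phi'|_{E_{I_2}(G)}$ is still all of $E_I(H)$, so no edge of $H$ is lost.

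The stability claim then falls out of the construction. For each $f \in E_I(H)$, the unique pre-image of $f$ in $E_{I_1}(G)$ under $\Phi'$ is $e_1 := (\Phi|_{E_{I_1}(G)})^{-1}(f)$, while the unique pre-image in $E_{I_2}(G)$ under $\Phi'$ is $\beta(e_1)$; since $t(\beta(e_1)) = t(e_1)$, we obtain $I_1 \cdot f = I_2 \cdot f$ with respect to $\Phi'$. For any $u \in L_I(H)$ the first edge already equalizes the two transitions, so $I_1 \cdot u = I_2 \cdot u$, and hence $I_1 \sim_{\Phi'} I_2$ is witnessed by any $v \in L_{t(u)}(H)$. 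I do not foresee a genuine obstacle here: the lemma really just records that two states lying in a common $\partial \Phi$-fiber with identical multisets of outgoing targets can be in-amalgamated at the level of edge labellings, and the modified right-resolver then identifies them after a single step.
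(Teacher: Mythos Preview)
Your proof is correct and essentially identical to the paper's: both modify $\Phi$ only on $E_{I_2}(G)$ via target-preserving bijections $E_{I_1 J}(G) \leftrightarrow E_{I_2 J}(G)$ (your $\beta$ is the inverse of the paper's $\Theta_J$'s), then observe that $I_1 \cdot a = I_2 \cdot a$ for every $a \in E_I(H)$, which immediately gives $I_1 \sim_{\Phi'} I_2$. You are slightly more explicit in checking the source/target compatibility of $\Phi'$ as a graph homomorphism, while the paper first records $F(I_1)=F(I_2)$ explicitly, but these are cosmetic differences.
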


\begin{proof}
We first claim that $F(I_1) = F(I_2)$, where we recall from \S \ref{sec-graph-homo} the notation $F(\cdot)$ for the set of follower states of a given state. Indeed, the assumption that $|E_{I_1 J}(G)| = |E_{I_2 J}(G)|$ for all $J \in V(G)$ implies in particular that, for any $J \in V(G)$, we have $|E_{I_1 J}(G)| > 0$, equivalently $J \in F(I_1)$, if and only $|E_{I_2 J}(G)| > 0$, equivalently $J \in F(I_2)$.

Let $F = F(I_1) = F(I_2)$. For each $J \in F$, choose a bijection $\Theta_J: E_{I_2 J}(G) \to E_{I_1 J}(G)$. Define $\Phi'$ as follows: $\partial \Phi' = \partial \Phi$, $\Phi'|_{E(G) \sm E_{I_2}(G)} = \Phi|_{E(G) \sm E_{I_2}(G)}$, and, for each $J \in F$, $\Phi'|_{E_{I_2 J}(G)} = \Phi|_{E_{I_1 J}(G)} \circ \Theta_J$. That is, $\Phi'$ agrees with $\Phi$ on states and on all edges with initial state other than $I_2$, but may disagree with $\Phi$ on the outgoing edges from $I_2$, specifically by permutations of parallel edges. Since $\Phi'|_{E_{I_2}(G)}: E_{I_2}(G) \to E_I(G)$ is a bijection (being a composition of bijections), $\Phi'$ is indeed right-resolving.

By the construction of $\Phi'$, for any $a \in E_I(G)$, we have $I_1 \cdot a = I_2 \cdot a$. Any $w \in L_I(H)$ is of the form $w = au$ with $a \in E_I(G)$, so $I_1 \cdot w = I_2 \cdot w$. Therefore $I_1 \sim_{\Phi'} I_2$.
\end{proof}

In \cref{lemma-amalg-stab}, the states $I_1, I_2$ are said to be \textit{in-amalgamated} by the operation $G \to G/\sim_{\Phi'}$; the inverse operation is known as \textit{in-splitting}. The lemma shows in particular that no fiber of a $\leq_S$-minimal graph $G$ over $M(G)$ has two states that can be in-amalgamated. Trahtman applies a special case of this fact to graphs of constant out-degree, and we follow his line of application; see the first paragraph of the proof of \cref{thm-bfc-cyc-bunch}, found in \S \ref{subsec-traht-app}.

The second sufficient condition, given in \cref{lemma-min-diff-stab}, concerns \textit{minimal images}:

\begin{defn}[minimal image]
Let $G,H$ be graphs with $H \leq_R G$, and let $\Phi \in \hom_R(G,H)$. A \textit{minimal image} is a set of the form $U = (\partial \Phi)^{-1}(I) \cdot u$ for some $I \in V(H)$ and $u \in L_I(H)$, such that $|U \cdot v| = |U|$ for any $v \in L_{t(u)}(H)$. 
\end{defn}

\begin{rmk}
Let $G,H$ be graphs with $H \leq_R G$, and let $\Phi \in \hom_R(G,H)$. For any $I \in V(H)$, any $u \in L_{I}(H)$, and any $v \in L_{t(u)}(H)$, if $U = (\partial \Phi)^{-1}(I) \cdot u$, we clearly have $| U \cdot v | \leq | U |$. If there exists $v \in L_{t(u)}(H)$ such that this inequality is strict, then $|U|$ is not minimal, i.e. $U$ is not a minimal image. This is the reason for the term.

For a right-resolver $\Phi$ on a strongly connected graph, all minimal images have the same size, which is called the \textit{degree} of $\Phi$, and a word with minimal image is called a \textit{magic word}. See \cite{amt-95-etds}, \S 9.1 for a treatment of degrees, using symbolic dynamics. In this paper, we only need a small fragment of the theory of degree, which we establish in a self-contained way with no connectedness assumptions, using the properties of stability.
\end{rmk}

We use minimal images to give a criterion for stability that can be seen as a pairwise version of \cref{prop-stab-sync-equiv}.

\begin{lemma}\label{lemma-stab-sync-magic}
Let $G,H$ be graphs with $H \leq_R G$, and let $\Phi \in \hom_R(G,H)$. For $I \in V(H)$ and $I_1, I_2 \in (\partial \Phi)^{-1}(I)$, we have $I_1 \sim_{\Phi} I_2$ if and only if $I_1 \cdot u = I_2 \cdot u$ for every word $u \in L_I(H)$ such that $(\partial \Phi)^{-1}(I) \cdot u$ is a minimal image.
\end{lemma}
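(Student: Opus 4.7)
The plan is to handle the two directions of the biconditional using the key observation that, on a minimal image, the action of any word is injective, and the observation that every set of the form $(\partial \Phi)^{-1}(I) \cdot u$ can be extended to a minimal image by a descent argument on set size.

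For the forward direction, suppose $I_1 \sim_{\Phi} I_2$ and let $u \in L_I(H)$ be such that $U = (\partial \Phi)^{-1}(I) \cdot u$ is a minimal image. By definition of $\sim_{\Phi}$, there exists $v \in L_{t(u)}(H)$ with $I_1 \cdot uv = I_2 \cdot uv$. Every element of $U$ lies in $(\partial \Phi)^{-1}(t(u))$, so $v$ acts on each element of $U$, and the minimal image condition says $|U \cdot v| = |U|$, so the map $J \mapsto J \cdot v$ from $U$ to $U \cdot v$ is a bijection, hence injective. Since $I_1 \cdot u, I_2 \cdot u \in U$ and their images under $v$ agree, we conclude $I_1 \cdot u = I_2 \cdot u$.

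For the converse, assume $I_1 \cdot u = I_2 \cdot u$ whenever $(\partial \Phi)^{-1}(I) \cdot u$ is a minimal image, and fix an arbitrary $u \in L_I(H)$; we must produce $v \in L_{t(u)}(H)$ with $I_1 \cdot uv = I_2 \cdot uv$. Starting from $U_0 = (\partial \Phi)^{-1}(I) \cdot u$, if $U_0$ is not already a minimal image then some $w_1 \in L_{t(u)}(H)$ achieves $|U_0 \cdot w_1| < |U_0|$; iterating on $U_{i+1} = U_i \cdot w_{i+1}$, the strictly decreasing sequence of positive integers $|U_i|$ must terminate at some $U_k$ for which $|U_k \cdot v'| \geq |U_k|$ for every admissible $v'$, which combined with the trivial reverse inequality shows that $U_k = (\partial \Phi)^{-1}(I) \cdot uw$, with $w = w_1 \cdots w_k$, is a minimal image. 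By hypothesis, $I_1 \cdot uw = I_2 \cdot uw$, so we may take $v = w$, proving $I_1 \sim_{\Phi} I_2$.

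The only substantive point, and perhaps the only place where one could slip, is verifying that the descent terminates at something matching the formal definition of a minimal image (i.e.\ that the stopping condition $|U_k \cdot v'| \geq |U_k|$ for all $v' \in L_{t(uw)}(H)$ really forces equality for all such $v'$, not merely for the $w_i$ considered along the way); this is handled by the trivial bound $|U \cdot v'| \leq |U|$, which holds because the action of $v'$ is a well-defined function on $U$. Beyond this, everything is essentially unwinding definitions, so I do not anticipate a serious obstacle.
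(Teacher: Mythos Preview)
Your proof is correct and follows essentially the same approach as the paper: the forward direction is the paper's contrapositive argument stated directly (injectivity of the action on a minimal image), and the converse is the same descent to a minimal image, with the only cosmetic difference that you invoke the hypothesis once at the end rather than at each step of the descent.
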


\begin{proof}
Let $I \in V(H)$ and let $I_1, I_2 \in (\partial \Phi)^{-1}(I)$. First suppose that $I_1 \sim_{\Phi} I_2$, and let $u \in L_I(H)$. If $I_1 \cdot u \neq I_2 \cdot u$, then let $v \in L_{t(u)}(H)$ be such that $I_1 \cdot uv = I_2 \cdot uv$. Then $| (\partial \Phi)^{-1}(I) \cdot uv | < | (\partial \Phi)^{-1}(I) \cdot u|$, so $(\partial \Phi)^{-1}(I) \cdot u$ is not a minimal image. Contrapositively, if $(\partial \Phi)^{-1}(I) \cdot u$ is a minimal image, then $I_1 \cdot u = I_2 \cdot u$.

Conversely, suppose that $I_1 \cdot u = I_2 \cdot u$ for every word $u \in L_I(H)$ such that $(\partial \Phi)^{-1}(I) \cdot u$ is a minimal image. Let $r = \min_{u \in L_I(H)} |(\partial \Phi)^{-1}(I) \cdot u|$ and let $u \in L_I(H)$. We claim that there exists $w \in L_{t(u)}(H)$ with $I_1 \cdot uw = I_2 \cdot uw$. Indeed, if $I_1 \cdot u \neq I_2 \cdot u$, then $(\partial \Phi)^{-1}(I) \cdot u$ is not a minimal image, so there exists $v \in L_{t(u)}(H)$ such that $|(\partial \Phi)^{-1}(I) \cdot uv | < |(\partial \Phi)^{-1}(I) \cdot u |$. We can thus inductively extend $v$ to obtain the desired $w$, so indeed $I_1 \sim_{\Phi} I_2$.
\end{proof}

The following easy observation about minimal images is the main reason that our results toward the bunchy factor conjecture (the generalized road colouring theorem and the related result for almost bunchy graphs) require strong connectedness. 

\begin{lemma}\label{lemma-sc-min-img}
Let $G,H$ be strongly connected graphs with $H \leq_R G$, and let $\Phi \in \hom_R(G,H)$. Every minimal image for $\Phi$ has the same cardinality, and for every $I' \in V(G)$, there exists a minimal image $U$ with $I' \in U$. 
\end{lemma}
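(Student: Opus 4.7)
The plan is to introduce the global minimum
\[
r \;=\; \min_{I \in V(H),\; u \in L_I(H)} |(\partial \Phi)^{-1}(I) \cdot u|,
\]
achieved by some pair $(I^*, u^*)$, and then prove (a) every minimal image has cardinality exactly $r$, and (b) every $I' \in V(G)$ lies in some minimal image. Both parts reduce to writing down the right concatenation of words and invoking one of the two strong connectedness hypotheses.

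For (a), I would fix a minimal image $U = (\partial \Phi)^{-1}(I) \cdot u$. The bound $|U| \ge r$ is immediate from the definition of $r$. For the reverse inequality, the idea is to route $U$ into the fiber over $I^*$ and then apply $u^*$. Using strong connectedness of $H$, pick a path $\pi \in L_{t(u),\, I^*}(H)$ and form the word $u \pi u^* \in L_I(H)$. Since $\pi$ ends at $I^*$, $U \cdot \pi \subseteq (\partial \Phi)^{-1}(I^*)$, and applying $u^*$ gives
\[
(U \cdot \pi) \cdot u^* \;\subseteq\; (\partial \Phi)^{-1}(I^*) \cdot u^*,
\]
so $|U \cdot \pi u^*| \le r$. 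On the other hand, $U$ being a minimal image and $\pi u^* \in L_{t(u)}(H)$ force $|U \cdot \pi u^*| = |U|$. Combining yields $|U| \le r$, hence $|U| = r$.

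For (b), I would first note that minimal images exist: any $U = (\partial \Phi)^{-1}(I) \cdot u$ with $|U| = r$ is a minimal image, because transitions cannot increase size while any further image has size at least $r$ by definition. Given $I' \in V(G)$, fix such a minimal image $U$ and any $J' \in U$. Using strong connectedness of $G$, choose a path $\gamma \in L_{J',\, I'}(G)$ and let $w = \Phi(\gamma) \in L(H)$. The right-resolving property makes $\gamma$ the unique lift of $w$ starting at $J'$, so $J' \cdot w = t(\gamma) = I'$ and hence $I' \in U \cdot w$. Finally, $U \cdot w$ is itself a minimal image: writing $U = (\partial \Phi)^{-1}(I) \cdot u$, for any $v \in L_{t(w)}(H)$ we have $|(U \cdot w) \cdot v| = |U \cdot wv| = |U|$ since $wv \in L_{t(u)}(H)$.

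There is no substantive obstacle; the lemma is a bookkeeping exercise built on the interplay between fiber sizes, path lifting, and strong connectedness. The only thing worth flagging is the asymmetric use of the two hypotheses: strong connectedness of $H$ is what routes an arbitrary minimal image into the minimizing pair $(I^*, u^*)$ in part (a), while strong connectedness of $G$ is what produces a lift with prescribed endpoint $I'$ in part (b).
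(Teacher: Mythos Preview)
Your proof is correct. The paper actually omits the proof entirely, calling the lemma an ``easy observation,'' so there is nothing to compare against; your argument is the natural one and fills in the details cleanly. The only cosmetic point worth tightening is in part (b): when you verify that $U \cdot w$ is a minimal image, you should also note explicitly that $|U \cdot w| = |U|$ (which follows from $w \in L_{t(u)}(H)$ and minimality of $U$), so that the chain $|(U\cdot w)\cdot v| = |U| = |U\cdot w|$ is complete.
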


In the proof of the generalized road colouring theorem, we apply \cref{lemma-sc-min-img} both directly and via \cref{lemma-min-diff-stab}. The proof of \cref{lemma-min-diff-stab} is adapted from the proof of Lemma 10.4.4 in \cite{bpr-09-ca}.

\begin{prop}\label{lemma-min-diff-stab}
Let $G, H$ be strongly connected graphs with $H \leq_R G$ and let $\Phi \in \hom_R(G,H)$. Let $I, J \in V(H)$ and let $u_1, u_2 \in L_{IJ}(H)$ be such that $U_i = (\partial \Phi)^{-1}(I) \cdot u_i$ are minimal images. If $|U_1 \Delta U_2| = 2$, say $U_1 \Delta U_2 = \{ J_1, J_2  \}$ (where $\Delta$ denotes the symmetric difference), then $J_1 \sim_{\Phi} J_2$.
\end{prop}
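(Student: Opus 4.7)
The plan is to apply Lemma~\ref{lemma-stab-sync-magic}, which reduces the desired stability to a check over minimal-image words, and then win with a cardinality argument. First, since $U_i \subseteq (\partial\Phi)^{-1}(J)$, both $J_1, J_2$ lie in the fiber over $J$, so Lemma~\ref{lemma-stab-sync-magic} is applicable and tells me it suffices to prove $J_1 \cdot w = J_2 \cdot w$ for every $w \in L_J(H)$ such that $(\partial \Phi)^{-1}(J) \cdot w$ is a minimal image. I would fix such a $w$ and show the equality.

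The crux is a cardinality count using Lemma~\ref{lemma-sc-min-img}. Let $r$ be the common size of all minimal images. The sets $U_1$, $U_2$, and $(\partial \Phi)^{-1}(J) \cdot w$ all have size $r$, while $|U_1 \cap U_2| = r - 1$. The key observation is that $U_i \cdot w = (\partial \Phi)^{-1}(I) \cdot u_i w$ is the image of the minimal image $U_i$ under a further word starting at $J$, and by the very definition of minimal image this preserves cardinality: $|U_i \cdot w| = r$. Since $U_i \cdot w \subseteq (\partial \Phi)^{-1}(J) \cdot w$ and the ambient set also has size $r$, I get $U_1 \cdot w = U_2 \cdot w = (\partial \Phi)^{-1}(J) \cdot w$.

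Writing $U_i = (U_1 \cap U_2) \cup \{J_i\}$, I then have $U_i \cdot w = (U_1 \cap U_2) \cdot w \cup \{J_i \cdot w\}$. The inequalities $|(U_1 \cap U_2) \cdot w| \leq r - 1$ and $|U_i \cdot w| = r$ force $|(U_1 \cap U_2) \cdot w| = r - 1$ and $J_i \cdot w \notin (U_1 \cap U_2) \cdot w$ for both $i$. Combined with $U_1 \cdot w = U_2 \cdot w$, the unique ``extra'' element of each $U_i \cdot w$ must coincide, yielding $J_1 \cdot w = J_2 \cdot w$.

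The proof is essentially bookkeeping once one has the idea of using Lemma~\ref{lemma-stab-sync-magic}, so I do not anticipate a serious obstacle. The only point worth flagging is that strong connectedness enters essentially through Lemma~\ref{lemma-sc-min-img}: without the uniform cardinality of minimal images, the inclusion $U_i \cdot w \subseteq (\partial \Phi)^{-1}(J) \cdot w$ would not upgrade to equality, and the cardinality argument breaks.
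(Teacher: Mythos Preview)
Your proof is correct and follows essentially the same approach as the paper's: both invoke \cref{lemma-stab-sync-magic} to reduce to minimal-image words, then use \cref{lemma-sc-min-img} and a cardinality count to force $J_1 \cdot w = J_2 \cdot w$. The paper phrases the endgame in terms of $|(U_1 \cup U_2)\cdot v| \leq r$ while you phrase it via $U_1 \cdot w = U_2 \cdot w = (\partial\Phi)^{-1}(J)\cdot w$, but the content is identical.
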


\begin{proof}
Let $r = |U_1| = |U_2|$. Suppose without loss of generality that $J_i \in U_i$, and let $U_0 = U_1 \cap U_2$, so that $U_i = U_0 \cup \{ J_i \}$. For any $v \in L_J(H)$, we have $(U_1 \cup U_2) \cdot v = (U_0 \cdot v) \cup (\{ J_1, J_2 \} \cdot v)$. We must have $|U_0 \cdot v| = |U_0| = r-1$ and $J_i \cdot v \notin U_0 \cdot v$, since otherwise we would have $|(\partial \Phi)^{-1}(I) \cdot u_i v| = |U_i \cdot v| < r$, contradicting the minimality assumption. Therefore $r-1 + | \{ J_1, J_2 \} \cdot v| = |(U_1 \cup U_2) \cdot v|$. Note that $|(U_1 \cup U_2) \cdot v| \in \{ r,r+1 \}$. 

Let $v \in L_J(H)$ be such that $(\partial \Phi)^{-1}(J) \cdot v$ is a minimal image. By \cref{lemma-stab-sync-magic}, to show that $J_1 \sim_{\Phi} J_2$, we need to show that $| \{ J_1, J_2 \} \cdot v| = 1$, or equivalently $|(U_1 \cup U_2) \cdot v| \leq r$. By strong connectedness and \cref{lemma-sc-min-img}, we have $|(\partial \Phi)^{-1}(J) \cdot v| = r$. But $(U_1 \cup U_2) \cdot v \se (\partial \Phi)^{-1}(J) \cdot v$, so indeed $|(U_1 \cup U_2) \cdot v| \leq r$.
\end{proof}


\section{The $O(G)$ conjecture and the road problem}\label{sec-og-road}

\subsection{Generalization of the road colouring theorem}

We first introduce the class of graphs involved in the theorem. A \textit{bunch} in a graph $G$ is a state $I \in V(G)$ with $|F(I)| = 1$. (This terminology, introduced in \cite{ant-09-ijm} and used also in \cite{bpr-09-ca}, is the origin of our term \textit{bunchy}, introduced in \S \ref{sec-bunchy}.) A strongly connected graph in which every state is a bunch is a \textit{cycle of bunches}. Let $M$ be a cycle of bunches with $V(M) = \{ I_0, \dots, I_{p-1} \}$, where $F(I_i) = \{ I_{i+1} \}$ (subscripts indexing states in a cycle of length $p$ should be read modulo $p$ throughout), and let $D_i = |E_{I_i}(M)|$. Note that $M$ is $\leq_R$-minimal if and only if the sequence of out-degrees $D_0, \dots, D_{p-1}$ is not a cyclic shift of a sequence obtained by concatenating a strictly shorter sequence with itself more than once. 

Let $M$ be a $\leq_R$-minimal cycle of bunches. Let $O_{M,q}$ be the cycle of bunches in which the sequence of out-degrees consists of $q$ cyclic repetitions of $D_0, \dots, D_{p-1}$. Note that $O_{M,1} = M$. Observe that, for a strongly connected graph $G$ with $M=M(G)$ a cycle of bunches, if $q = \per(G)/\per(M)$ and $H$ is a cycle of bunches with $H \leq_S G$, then $H = O_{M,q}$. Let $O_{D,p} = O_{M_D,p}$ be the cycle of bunches of period $p$ and constant out-degree $D$. Note that $O_{D,1} = M_D$. For a strongly connected, aperiodic graph $G$ of constant out-degree $D$, a synchronizer $G \to M_D$ is precisely a synchronizing road colouring of $G$. 

\begin{thm}[Trahtman, \cite{ant-09-ijm}]\label{thm-rct-aper}
Let $G$ be a strongly connected, aperiodic graph of constant out-degree $D$. Then $M_D \leq_S G$.
\end{thm}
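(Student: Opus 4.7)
The plan is to argue by strong induction on $|V(G)|$. The base case $|V(G)|=1$ is immediate: the unique road colouring of a single vertex with $D$ self-loops is trivially synchronizing. For the inductive step, the strategy has two stages. First, produce some road colouring $\Phi_0 \in \hom_R(G,M_D)$ whose stability relation $\sim_{\Phi_0}$ is \emph{nontrivial}, i.e.\ has a class of size at least two. Second, descend to the quotient $G' := G/\sim_{\Phi_0}$ and lift a synchronizing road colouring from $G'$.

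For the second stage, let $Q : G \to G'$ be the canonical quotient right-resolver. By \cref{thm-struct-sync-comp}(2), $Q$ is synchronizing. The graph $G'$ is strongly connected because factors of strongly connected graphs are strongly connected, has constant out-degree $D$ because $Q$ is right-resolving, and is aperiodic because every cycle in $G$ maps to a cycle of the same length in $G'$, so $\per(G') \mid \per(G) = 1$. Nontriviality of $\sim_{\Phi_0}$ gives $|V(G')| < |V(G)|$, so the inductive hypothesis yields a synchronizer $\bar\Psi : G' \to M_D$. Then $\Psi = \bar\Psi \circ Q$ is a road colouring of $G$, and by \cref{thm-struct-sync-comp}(4) $\Psi$ is synchronizing, completing the induction.

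The real work lies in producing $\Phi_0$ with nontrivial $\sim_{\Phi_0}$, and here I would follow Trahtman's two-case analysis. \textbf{Case A}: there exist distinct $I_1, I_2 \in V(G)$ with $|E_{I_1 J}(G)| = |E_{I_2 J}(G)|$ for every $J \in V(G)$. Since $M_D$ has a single state, the fiber hypothesis of \cref{lemma-amalg-stab} is automatic, and the lemma directly supplies a road colouring $\Phi'$ with $I_1 \sim_{\Phi'} I_2$. \textbf{Case B}: no such ``twin'' pair exists. The goal then is to modify a road colouring by local edge permutations until the function graph of some letter $a$, restricted to a minimal image $W$, has a \emph{unique} tree of maximal height. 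When this holds, iterating $a$ collapses that tree one level at a time while keeping $|W \cdot a^k|$ constant until the last step, producing two minimal images of $\Phi$ that differ in exactly two states; \cref{lemma-min-diff-stab} (together with \cref{lemma-sc-min-img}, which needs strong connectedness to guarantee that all minimal images have the same size) then certifies a nontrivial stable pair.

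The main obstacle is the combinatorial construction in Case B: showing that, absent twins, one can actually recolour edges of $G$ to achieve the uniquely-tallest-tree property. The argument proceeds by bookkeeping the multiset of tree heights across the cycles of the function graph and exhibiting, for any road colouring whose tallest tree is not unique, a local edge recolouring that strictly increases one tree height (here the no-twins assumption enters essentially, since twins are exactly the obstruction to such a recolouring) or otherwise lexicographically improves the height profile, so that iteration must terminate at a colouring with a uniquely tallest tree. Aperiodicity is indispensable throughout, because it ensures that a sufficiently long word in a single letter can be iterated so that the cycle structure on $W$ collapses to a single fixed point, without which the ``unique tallest tree'' does not translate into two minimal images differing in a single pair. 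This delicate analysis, originating in \cite{ant-09-ijm}, is where I would need to follow Trahtman's proof most closely.
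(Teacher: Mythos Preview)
Your overall architecture matches the paper's exactly: induct on $|V(G)|$, split into the in-amalgamable case (your Case~A, handled by \cref{lemma-amalg-stab}) and the unique-tallest-tree case (your Case~B, which the paper packages as \cref{lemma-traht-utt} plus \cref{lemma-utt-stab}), obtain a right-resolver with nontrivial stability, quotient via \cref{thm-struct-sync-comp}(2),(4), and recurse.

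Two points need correction. First, your mechanism for extracting a stable pair from a unique tallest tree is garbled. One does not ``collapse the tree one level at a time while keeping $|W\cdot a^k|$ constant until the last step''; a minimal image never shrinks. Instead (see \cref{lemma-utt-stab}) one takes a minimal image $U$ containing a state $I$ of maximal height, and applies two words $u_1,u_2$ differing by one full period of the cycle structure. Every \emph{other} element of $U$ has already entered its periodic orbit after $u_1$, so $U\cdot u_1$ and $U\cdot u_2$ agree except at $I\cdot u_1\neq I\cdot u_2$, and \cref{lemma-min-diff-stab} applies. Note also that the unique tallest tree lives in the function graph on all of $V(G)$, not on a minimal image.

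Second, your explanation of where aperiodicity enters is wrong. The unique-tallest-tree argument uses no aperiodicity whatsoever; indeed the identical lemmas prove the periodic \cref{thm-prct} and the further generalization \cref{thm-bfc-cyc-bunch}. Aperiodicity is used only to pin down the terminal object of the recursion: the only strongly connected aperiodic cycle of bunches of out-degree $D$ is $M_D$, so your base case $|V(G)|=1$ really is the only base case, and the recursion lands on $M_D$ rather than on some $O_{D,p}$ with $p>1$. Your sketch of how \cref{lemma-traht-utt} itself is proved (lexicographic improvement of a height profile) also does not match Trahtman's actual case analysis, but since the paper simply cites that lemma from \cite{bpr-09-ca} rather than proving it, this is not a discrepancy with the paper.
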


\begin{thm}[Béal-Perrin \cite{bp-14-dam}, Budzban-Feinsilver \cite{bf-11-aaecc}]\label{thm-prct}
Let $G$ be a strongly connected graph of constant out-degree $D$ and period $p$. Then $O_{D,p} \leq_S G$.
\end{thm}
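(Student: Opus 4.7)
The plan is to reduce \cref{thm-prct} to the aperiodic road colouring theorem \cref{thm-rct-aper} by passing to the $p$-th power, and then to produce the required synchronizer by a Trahtman-style induction on $|V(G)|$. Since $G$ is strongly connected of period $p$, its vertex set partitions uniquely into cyclic classes $V_0 \sqcup \cdots \sqcup V_{p-1}$ with every edge going from $V_i$ to $V_{(i+1) \bmod p}$; analogously $V(O_{D,p}) = \{I_0, \ldots, I_{p-1}\}$. Any $\Phi \in \hom_R(G, O_{D,p})$ respects these partitions, and after composing with an automorphism of $O_{D,p}$ one may normalize $\partial\Phi(V_i) = I_i$. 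Let $H_i$ denote the strong component of $G^p$ on the vertex set $V_i$, whose edges are the length-$p$ paths in $G$ starting and ending in $V_i$; standard arguments show each $H_i$ is strongly connected, aperiodic, and of constant out-degree $D^p$.

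A choice of $\Phi \in \hom_R(G, O_{D,p})$ is equivalent to a labeling $c: E(G) \to \{0, \ldots, D-1\}$ whose restriction to each $E_I(G)$ is a bijection, and it induces a road colouring $\Phi^{\sharp}_i: H_i \to M_{D^p}$ labelling a length-$p$ path $e_1 \cdots e_p$ by $(c(e_1), \ldots, c(e_p)) \in \{0, \ldots, D-1\}^p$. Using \cref{prop-stab-sync-equiv} and strong connectedness of $G$, one shows that $\Phi$ is synchronizing if and only if $\Phi^{\sharp}_0$ is: if $u \in L_{I_0}(O_{D,p})$ satisfies $|V_0 \cdot u| = 1$, then for each $i$ and any $v \in L_{I_i I_0}(O_{D,p})$, the word $vu$ collapses $V_i$ since $V_i \cdot v \subseteq V_0$; conversely, a collapsing word for $V_0$ under $\Phi$ can be extended to have length a multiple of $p$, corresponding to a collapsing word for $V(H_0)$ in $M_{D^p}$. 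Hence it suffices to construct a labeling $c$ whose induced $\Phi^{\sharp}_0$ is synchronizing.

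The natural attempt, applying \cref{thm-rct-aper} to $H_0$ to obtain a synchronizing road colouring $\psi$ and then lifting $\psi$ back to $G$, is blocked because not every $\psi$ arises as $\Phi^{\sharp}_0$: the product structure requires the $i$-th coordinate of the label of a length-$p$ path to depend only on the $i$-th edge. To circumvent this, I would adapt Trahtman's inductive stability-based argument to operate on $G$ directly. Starting from any $\Phi \in \hom_R(G, O_{D,p})$, use \cref{lemma-amalg-stab}, \cref{lemma-sc-min-img}, and \cref{lemma-min-diff-stab} to obtain a right-resolver with a nontrivial stability class $I_1 \sim_{\Phi} I_2$ (necessarily contained in a single $V_j$), then quotient by $\sim_{\Phi}$ to obtain a strictly smaller strongly connected graph of constant out-degree $D$ and period $p$ (period is preserved because the quotient still factors through $O_{D,p}$ while admitting every cycle of $G$ as a closed walk), and recurse. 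By \cref{thm-struct-sync-comp}(4), composing the synchronizers produced at each level yields a synchronizing right-resolver $G \to O_{D,p}$, and the base case $G = O_{D,p}$ is trivial.

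The main obstacle is producing a right-resolver with a nontrivial stable pair at each inductive step. Trahtman's tallest-tree / function-graph technique uses aperiodicity in an essential way, so in the periodic setting one must apply it inside the aperiodic components $H_i$ of $G^p$; but the recolorings used must come from permuting outgoing edges at a single state of $G$, which corresponds to modifying only the $(j+1)$-th coordinate of labels in every $H_i$ (where $j$ satisfies $I \in V_j$). Verifying that the stable-pair construction can be executed within this restricted class of recolorings is the technical heart of the argument and is handled by the Trahtman-style proof of the more general \cref{thm-bfc-cyc-bunch} in \S\ref{sec-traht}.
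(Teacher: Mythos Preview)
Your proposal ultimately lands on the paper's approach: \cref{thm-prct} is not proved separately but falls out as the special case $M(G)=M_D$ of \cref{thm-bfc-cyc-bunch}, and the inductive stability argument you sketch in your third paragraph, deferring the stable-pair construction to \S\ref{sec-traht}, is exactly how the paper proceeds. The detour through $G^p$ in your first two paragraphs is unnecessary, and your fourth paragraph slightly mislocates the difficulty: the tallest-tree technique of \cref{lemma-utt-stab} and \cref{lemma-traht-utt} does not rely on aperiodicity and is applied directly to the cyclic system $((\partial\Phi)^{-1}(I_i),a_i)_{0\le i\le p-1}$ on $G$ itself, not inside the aperiodic components of $G^p$, so no restricted-recolouring compatibility check is needed.
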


We prove the following generalization:

\begin{thm}\label{thm-bfc-cyc-bunch}
Let $G$ be a strongly connected graph such that $M(G)$ is a cycle of bunches. Let $q = \per(G)/\per(M(G))$. Then $O_{M(G),q} \leq_S G$.
\end{thm}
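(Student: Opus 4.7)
The plan is to prove the theorem by induction on $|V(G)|$, adapting Trahtman's inductive approach to the cycle-of-bunches setting. Each reductive step is implemented by quotienting by the stability relation of a suitably chosen right-resolver $\Psi \in \hom_R(G, O_{M(G),q})$.

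\textbf{Existence of right-resolvers and the inductive reduction.} First, I would verify that $O_{M(G),q} \leq_R G$. Write $M = M(G)$ with states $I_0, \dots, I_{p-1}$, and fix $\Phi \in \hom_R(G, M)$ with fibers $\cV_i = (\partial \Phi)^{-1}(I_i)$. The bunch condition on $M$ forces every edge from $\cV_i$ to land in $\cV_{i+1 \bmod p}$, so $\{\cV_i\}$ is a cyclic $p$-partition of $V(G)$. Since $\per(G) = pq$, the unique cyclic $pq$-partition $V(G) = \bigsqcup_{j=0}^{pq-1} \cC_j$ refines $\{\cV_i\}$, with $\cC_j \se \cV_{j \bmod p}$ after relabeling. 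Mapping each $\cC_j$ to $J_j \in V(O_{M,q})$ with an arbitrary bijection on outgoing edges yields a right-resolver $\Psi$ (the out-degree compatibility is automatic, since each $I \in \cC_j$ has out-degree $D_{j \bmod p}$). The base case $|V(G)|=pq$ is trivial. For the inductive step, suppose $|V(G)|>pq$ and that one has found $\Psi \in \hom_R(G, O_{M,q})$ with $\sim_\Psi$ nontrivial. Then $G \to G/{\sim_\Psi}$ is synchronizing by \cref{thm-struct-sync-comp}(2), and $G/{\sim_\Psi}$ is strongly connected with strictly fewer states. Since $M \leq_R G/{\sim_\Psi}$ and $M$ is $\leq_R$-minimal, \cref{thm-mg-sigma} gives $M(G/{\sim_\Psi}) = M$; and since periods divide along right-resolving factors, $\per(G/{\sim_\Psi}) = pq$. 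The induction hypothesis thus yields a synchronizer $G/{\sim_\Psi} \to O_{M,q}$, which composed with the quotient map yields a synchronizer $G \to O_{M,q}$ by \cref{thm-struct-sync-comp}(4).

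\textbf{Producing nontrivial stability.} Everything reduces to showing that, whenever $|V(G)| > pq$, some $\Psi \in \hom_R(G, O_{M,q})$ has $\sim_\Psi$ nontrivial. I would attack this by generalizing Trahtman's ``function graph with a unique tallest tree'' technique. Given a candidate $\Psi$, one chooses, for each $J_j \in V(O_{M,q})$, a single outgoing edge $a_j \in E_{J_j}(O_{M,q})$, defining the self-map $f: V(G) \to V(G)$ by $f(I) = I \cdot a_{\partial \Psi(I)}$. Since $f(\cC_j) \se \cC_{j+1 \bmod pq}$, the iterate $f^{pq}$ restricts to a self-map of each $\cC_j$ whose functional graph decomposes into rho-shapes. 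The aim is to adjust $\Psi$ (by permuting parallel edges, which changes $\sim_\Psi$ without affecting $M(G)$ or $\per(G)$) together with the $a_j$ so that one of these functional graphs has a unique tree of maximum height. From such a configuration one extracts either (a) a fiber pair $I_1, I_2 \in (\partial \Psi)^{-1}(J)$ with $|E_{I_1 K}(G)| = |E_{I_2 K}(G)|$ for every $K$, feeding \cref{lemma-amalg-stab} to deliver a modified $\Psi'$ that stabilizes $I_1, I_2$; or (b) two magic words whose minimal images (of common cardinality, by \cref{lemma-sc-min-img}) have symmetric difference exactly $2$, feeding \cref{lemma-min-diff-stab} to deliver a stable pair directly.

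\textbf{The main obstacle.} I expect the hard part to be the unique-tallest-tree construction itself. Trahtman's original argument is driven by the symmetry of constant out-degree; here the out-degrees vary cyclically with period $p$, but the bunch structure should give a crucial handle, since all edges from $\cV_i$ land in the single next class $\cV_{i+1 \bmod p}$. This should let the combinatorial tree manipulations be transported around the $p$-cycle at the cost of working with $f^{pq}$ in place of $f$. I anticipate a case split in the spirit of Trahtman's: either some state of $G$ has a unique predecessor, fueling a direct tree argument, or every state has multiple predecessors, fueling a more immediate in-amalgamation via \cref{lemma-amalg-stab}. Once this combinatorial lemma is in hand, the remainder of the proof is assembly of the stability-relation machinery from \S\ref{sec-stab-sync}.
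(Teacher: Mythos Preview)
Your inductive strategy is correct and matches the paper's, but the paper streamlines it in two ways worth noting.

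First, the paper works over $M = M(G)$ rather than $O_{M,q}$: one seeks $\Phi \in \hom_R(G,M)$ with $\sim_\Phi$ nontrivial, quotients, and repeats. There is no need to construct $\Psi \in \hom_R(G,O_{M,q})$ or to track the period through the induction. At the end one simply observes (via \cref{prop-bfc-equiv}) that any $\leq_S$-minimal graph with this $M$ is bunchy, hence a cycle of bunches, and the period identifies which one. Your period-preservation argument is correct but unnecessary.

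Second, and more importantly, the unique-tallest-tree step requires no new combinatorics. The paper's key observation is that Trahtman's lemma (restated as \cref{lemma-traht-utt}) already holds verbatim whenever $M(G)$ is a cycle of bunches: the original proof never uses constant out-degree per se, only the bunch structure, so one can follow it line by line in this generality. The trichotomy is: $G$ is itself a cycle of bunches; or $G$ has two distinct bunches with the same target (these in-amalgamate via \cref{lemma-amalg-stab}); or $G$ admits a total order colouring with a unique tallest tree (then \cref{lemma-utt-stab} applies with $H = M$). Your anticipated dichotomy ``unique predecessor vs.\ multiple predecessors'' is not the right split, and there is no need to pass to $f^{pq}$ --- the function graph $W$ of $0$-labelled edges is analyzed directly, exactly as in the constant out-degree case.
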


The proof (see \S \ref{sec-traht}) follows that of Theorems \ref{thm-rct-aper} and \ref{thm-prct}. The strategy is to show that if $G$ is not itself a cycle of bunches, then there exists $\Phi \in \hom_R(G,M(G))$ with $\sim_{\Phi}$ nontrivial, by constructing two minimal images that differ by a pair and applying \cref{lemma-min-diff-stab}. A very similar strategy is used to prove the bunchy factor conjecture for almost bunchy graphs (\cref{cor-alm-og-bunchy}), the (substantial) difference being the different techniques used to obtain the requisite pair of minimal images.

\subsection{The $O(G)$ conjecture implies the road colouring theorem}

We now recall the sense in which the $O(G)$ conjecture was first understood to relate to the road problem. Although the road colouring theorem clearly implies the $O(G)$ conjecture for strongly connected, aperiodic graphs of constant out-degree, the converse implication may not be apparent. Indeed, the $O(G)$ conjecture asserts that $O(G)$ is well-defined for every strongly connected graph, but does not immediately say how to compute $O(G)$, whereas the road colouring theorem explicitly specifies the form of $O(G)$ for the graphs $G$ to which it applies. However, the $O(G)$ conjecture does imply the road colouring theorem, via a key result from \cite{agw-77-tms}, for which we require a definition.

\begin{defn}[higher edge graph]\label{defn-higher-edge}
Let $G$ be a graph. For $k \geq 2$, the $k$-th \textit{higher edge graph} of $G$ is the graph $G^{[k]}$ with edge set consisting of edge paths $e_1 e_2 \cdots e_{k-1} e_k$ of length $k$ in $G$, and states given by $s(e_1 e_2 \cdots e_{k-1} e_k) = e_1 e_2 \cdots e_{k-1}$, $t(e_1 e_2 \cdots e_{k-1} e_k) = e_2 \cdots e_{k-1} e_k$. We define $G^{[1]} = G$.
\end{defn}

It is a standard result (\cite{lm-95-intro}, Chapter 2) that $G \leq_S G^{[k]}$ for any strongly connected graph $G$ and any $k \geq 1$. In our terminology, Adler-Goodwyn-Weiss showed the following:

\begin{lemma}[\cite{agw-77-tms}, Lemma 4]\label{lemma-agw-higher-edge}
Let $G$ be a strongly connected, aperiodic graph of constant out-degree $D$. Then for all sufficiently large $k$, we have $M_D \leq_S G^{[k]}$.
\end{lemma}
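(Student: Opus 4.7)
The strategy is to construct, for $k$ sufficiently large, a synchronizing road colouring of $G^{[k]}$ by exploiting the memory of length-$(k-1)$ histories available in $G^{[k]}$. By \cref{prop-stab-sync-equiv}, it suffices to produce $\Phi \in \hom_R(G^{[k]},M_D)$ and a colour word $w$ such that $X\cdot w$ is independent of $X \in V(G^{[k]})$.

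First I would record a consequence of aperiodicity: since $G$ is strongly connected and aperiodic, by the Perron--Frobenius theorem there exists $N$ such that for every $n \geq N$ and every pair $I,J \in V(G)$, there is an edge-path of length exactly $n$ from $I$ to $J$. The same property is inherited by every $G^{[k]}$, which is again strongly connected and aperiodic. The naive \emph{induced} colouring on $G^{[k]}$ from a road colouring $\phi$ of $G$, defined by $\Phi(e_1 \cdots e_k) = \phi(e_k)$, does not suffice: a short computation shows that, under this colouring, the end state after reading a word of length at least $k-1$ from $X$ depends on $X$ only through $t(X) \in V(G)$, so synchronizability of the induced colouring on $G^{[k]}$ is equivalent to synchronizability of $(G,\phi)$, which we cannot assume. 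The construction must therefore use the freedom to choose the bijection $E_{t(X)}(G) \to \{1,\dots,D\}$ independently at each state $X$ of $G^{[k]}$, so that the colour of an outgoing edge from $X$ can depend on the full history $e_1\cdots e_{k-1}$.

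Fixing a target state $X^* \in V(G^{[k]})$ and an integer $M$ large enough that every $X \in V(G^{[k]})$ admits a path of length $M$ to $X^*$ (via the Perron--Frobenius fact applied to $G^{[k]}$), the plan is to select for each $X$ such a path $\pi_X$, arranging the family $\{\pi_X\}_{X}$ to be \emph{confluent}: whenever two paths coincide at a state at the same step, they continue identically thereafter. Such a family can be built as a backwards breadth-first in-tree rooted at $X^*$, with shorter paths padded up to length $M$ using cycles at $X^*$ whose lengths are available by aperiodicity of $G^{[k]}$. The colouring $\Phi$ is then defined by assigning, at each state $Y$ on some $\pi_X$ at step $i$, the colour $w_i$ (where $w=w_1\cdots w_M$ is a fixed word) to the outgoing edge of $Y$ prescribed by $\pi_X$; confluence makes this well-defined, and the remaining outgoing edges at each state are coloured arbitrarily to complete a bijection. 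By construction, reading $w$ from any $X$ traces $\pi_X$ and lands at $X^*$, giving synchronization.

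The main obstacle is jointly realizing confluence and uniform length $M$ while keeping the colouring consistent at every state of $G^{[k]}$. In particular, the padding cycles at $X^*$ must themselves be arranged confluently with respect to the colour assignments they induce at shared intermediate states, so the BFS in-tree and the cycle structure near $X^*$ need to be designed simultaneously. Aperiodicity of $G^{[k]}$, which yields cycles at $X^*$ of every sufficiently large length, provides enough flexibility to make these choices compatibly; once the family $\{\pi_X\}$ is fixed, the verification that $\Phi$ is well-defined and that $w$ synchronizes is routine.
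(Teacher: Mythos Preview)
The paper does not give its own proof of this lemma; it is cited from \cite{agw-77-tms} and used as a black box in \cref{prop-amt-og-implies-road}. So there is no ``paper's proof'' to compare against, only the original Adler--Goodwyn--Weiss argument.

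Your plan has a genuine gap at exactly the point you flag as ``the main obstacle.'' Your confluence condition---that two paths agreeing at a state \emph{at the same step} continue identically---is not sufficient to make the colouring $\Phi$ well-defined. The colouring is a function of the state $Y$ alone, but a single $Y$ can occur at step $i$ of some $\pi_X$ and at step $j\neq i$ of some $\pi_{X'}$, with different prescribed outgoing edges; whenever $w_i=w_j$ you have a conflict. This is unavoidable in your setup: every $Y$ occurs at step $0$ of its own $\pi_Y$, while the states on your padding cycles through $X^*$ also occur at positive steps of other paths. Your closing assertion that ``aperiodicity \ldots\ provides enough flexibility to make these choices compatibly'' is precisely what needs to be proved, not a routine verification. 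A telling sign that something is missing: your construction never uses the higher-edge structure of $G^{[k]}$---you treat it as an arbitrary strongly connected aperiodic graph of out-degree $D$---so if the argument worked as written it would prove Trahtman's road colouring theorem directly for $G=G^{[1]}$.

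The Adler--Goodwyn--Weiss argument uses that structure in an essential way. A state of $G^{[k]}$ \emph{is} a path $e_1\cdots e_{k-1}$ in $G$; for $k-1$ at least the Perron--Frobenius threshold, one fixes for each $I\in V(G)$ a path $\gamma_I$ of length $k-1$ from $I$ to a common target $I_0$, and then lets the colouring at $X$ depend on how the recorded history $e_1\cdots e_{k-1}$ sits relative to the family $\{\gamma_I\}$. This ``memory'' is what makes the colour assignments at different visits to the same underlying vertex of $G$ consistent, and it is exactly the ingredient absent from your construction.
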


Together with an easy observation about partially ordered sets, the Adler-Goodwyn-Weiss result shows that the $O(G)$ conjecture implies the road colouring theorem.

\begin{lemma}\label{lemma-poset}
Let $(\cP, \preceq)$ be a partially ordered set such that, for any $y \in \cP$, there exists a unique $\preceq$-minimal element $O(y) \preceq y$. If $x \preceq y$, then $O(x) = O(y)$.
\end{lemma}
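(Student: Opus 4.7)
The plan is to apply the uniqueness hypothesis at $y$ directly to the element $O(x)$. The key observation is that $O(x)$ is $\preceq$-minimal in all of $\cP$: if some $z \prec O(x)$ existed, then by transitivity $z \preceq x$ would place $z$ in the downset of $x$ strictly below $O(x)$, contradicting the choice of $O(x)$ as the unique $\preceq$-minimal element below $x$. Combining $O(x) \preceq x$ with the assumption $x \preceq y$ yields $O(x) \preceq y$ by transitivity. Thus $O(x)$ is a $\preceq$-minimal element of $\cP$ satisfying $O(x) \preceq y$, and by the uniqueness of such an element at $y$, we conclude $O(x) = O(y)$.

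The only subtle point is the parsing of ``$\preceq$-minimal element $O(y) \preceq y$'': the intended meaning is that $O(y)$ is minimal in $\cP$, which is equivalent to being minimal within the downset of $y$, since any strict predecessor of an element of the downset would itself lie in the downset. Once this is clarified, the proof is essentially two lines of poset-theoretic bookkeeping, and I anticipate no real obstacle. The purpose of the lemma is the downstream application: combined with the trivial fact that $G \leq_S G^{[k]}$ and with \cref{lemma-agw-higher-edge}, it gives $O(G) = O(G^{[k]}) = M_D$ (using that $M_D$, having a single state, is automatically $\leq_S$-minimal), and hence $M_D \leq_S G$, thereby deducing the road colouring theorem from the $O(G)$ conjecture.
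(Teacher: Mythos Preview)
Your proof is correct. The paper does not supply a proof of this lemma at all, describing it only as ``an easy observation about partially ordered sets''; your two-line argument via transitivity and the uniqueness hypothesis is exactly the intended content, and your clarification that minimality in $\cP$ coincides with minimality in the downset of $y$ removes the only possible ambiguity in the statement.
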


\begin{prop}[\cite{amt-95-etds}]\label{prop-amt-og-implies-road}
Suppose that the $O(G)$ conjecture is true. Let $G$ be a strongly connected, aperiodic graph of constant out-degree $D$. Then $M_D \leq_S G$.
\end{prop}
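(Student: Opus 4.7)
The plan is to combine the Adler-Goodwyn-Weiss result (\cref{lemma-agw-higher-edge}) with the general poset observation (\cref{lemma-poset}), using as the poset the class of (isomorphism classes of) graphs with the partial order $\leq_S$. Under the hypothesis of \cref{main-conj}, every strongly connected graph $y$ has a unique $\leq_S$-minimal element $O(y)$ below it, so \cref{lemma-poset} applies. The key input is that each $G$ sits below its higher edge graphs in the $\leq_S$ order.

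First, let $k \geq 1$. I would invoke the standard fact, cited just before \cref{lemma-agw-higher-edge}, that $G \leq_S G^{[k]}$ for any strongly connected graph $G$. Applying \cref{lemma-poset} with $x = G$ and $y = G^{[k]}$ then yields $O(G) = O(G^{[k]})$.

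Next, I would use \cref{lemma-agw-higher-edge} to choose $k$ large enough that $M_D \leq_S G^{[k]}$. Note that $M_D$ is itself $\leq_S$-minimal: since it has a single state, any graph $H$ with $H \leq_R M_D$ must also have a single state, and right-resolving forces $H \cong M_D$. Therefore the unique $\leq_S$-minimal element below $G^{[k]}$ is $M_D$, i.e. $O(G^{[k]}) = M_D$. Combining, $O(G) = M_D$, and in particular $M_D \leq_S G$, as required.

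There is no real obstacle here beyond assembling the cited pieces; the only point requiring a moment of care is verifying that $M_D$ is indeed $\leq_S$-minimal so that \cref{lemma-poset} delivers $O(G^{[k]}) = M_D$ rather than merely $O(G^{[k]}) \leq_S M_D$. This follows immediately from the observation that right-resolvers preserve out-degree on each fiber, so the one-state graph $M_D$ admits no strictly smaller right-resolving image.
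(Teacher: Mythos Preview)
Your proof is correct and follows essentially the same approach as the paper: invoke \cref{lemma-agw-higher-edge} to get $M_D \leq_S G^{[k]}$ for large $k$, observe that $M_D$ is $\leq_S$-minimal so $O(G^{[k]}) = M_D$, and combine with $G \leq_S G^{[k]}$ via \cref{lemma-poset}. Your added justification that $M_D$ is $\leq_S$-minimal makes explicit a step the paper leaves to the reader.
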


\begin{proof}
Let $k$ be large enough that $M_D \leq_S G^{[k]}$, by \cref{lemma-agw-higher-edge}. Then in fact $M_D = O(G^{[k]})$. Since $G \leq_S G^{[k]}$ as well, the result follows by \cref{lemma-poset}.
\end{proof}

\section{Bunchiness}\label{sec-bunchy}

In this section, we define and characterize the classes of bunchy and almost bunchy graphs, and demonstrate the importance of bunchy graphs to the structural properties of right-resolvers. 

\subsection{Bunchy and almost bunchy graphs}

We recall from Theorem \ref{thm-mg-sigma}, for a graph $G$, the notation $\Sigma_G: V(G) \to V(M(G))$ for the unique state map among right-resolvers $G \to M(G)$. 

\begin{defn}[bunchy states and graphs]\label{defn-bunchy}
Let $G$ be a graph. We say that a state $I \in V(G)$ is \textit{bunchy} if $\Sigma_G|_{F(I)}: F(I) \to F(\Sigma_G(I)) \se V(M(G))$ is a bijection. We say that $G$ is bunchy if every $I \in V(G)$ is bunchy. We say that $G$ is \textit{almost bunchy} if for each $I,J \in V(M)$, there exists at most one $I' \in \Sigma_G^{-1}(I)$ such that $|F(I') \cap \Sigma_G^{-1}(J)| \geq 2$.
\end{defn}

\begin{rmk}
The definition of almost bunchiness means that for every ordered pair of $\Sigma_G$ fibers in $G$, say the fibers of states $I,J \in V(M)$, there is at most one state in the fiber of $I$ that does not ``look bunchy'', in the sense that it has non-parallel outgoing edges into the fiber of $J$. In other words, an almost bunchy graph almost satisfies the conditions for bunchiness, but an exception is allowed for each ordered pair of fibers.
\end{rmk}

The following is evident but we state it explicitly for reference:

\begin{lemma}\label{lemma-bunchy-factor-princ}
The classes of bunchy and almost bunchy graphs are closed under right-resolvers. Moreover, if $G$ is a bunchy graph and $C$ is a principal subgraph of $G$ with $M(C) = M(G)$, then $C$ is also bunchy.
\end{lemma}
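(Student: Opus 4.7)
The plan is to treat the two claims separately, using Theorem \ref{thm-mg-sigma} (the uniqueness of the state map among right-resolvers into $M(\cdot)$) together with the elementary fact that a right-resolver induces a bijection on outgoing edges at every state.

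For closure under right-resolvers, let $\Phi \in \hom_R(G, H)$. The first step is to observe that $M(H) = M(G)$ and $\Sigma_G = \Sigma_H \circ \partial \Phi$: since the finite poset of graphs $\leq_R G$ has unique $\leq_R$-minimal element $M(G)$, we have $M(G) \leq_R H$ and hence $M(H) = M(G)$; and composing any right-resolver $H \to M(G)$ with $\Phi$ produces a right-resolver $G \to M(G)$ whose state map equals $\Sigma_G$ by uniqueness. Given $I \in V(H)$, choose a preimage $I' \in \partial \Phi^{-1}(I)$; the right-resolving bijection $\Phi|_{E_{I'}(G)}$ yields $F(I) = \partial \Phi(F(I'))$. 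In the bunchy case, the bijectivity of $\Sigma_G|_{F(I')}$ forces that of $\Sigma_H|_{F(I)}$ through the factorization $\Sigma_G = \Sigma_H \circ \partial \Phi$. In the almost bunchy case I argue contrapositively: two distinct $I_1', I_2' \in \Sigma_H^{-1}(I)$ each with at least two followers in $\Sigma_H^{-1}(J)$ lift to two distinct $I_1, I_2 \in \Sigma_G^{-1}(I)$ each with at least two followers in $\Sigma_G^{-1}(J)$, because the right-resolving bijection sends a pair of distinct $H$-followers of $I_k'$ to a pair of targets in $G$ whose $\partial \Phi$-images are the original two (hence distinct) states. This contradicts almost bunchiness of $G$.

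For the principal subgraph claim, fix $\Phi \in \hom_R(G, M(G))$. Since $C$ is principal in $G$, the restriction $\Phi|_C$ is a right-resolver onto its image $\Phi(C)$, which one checks to be a principal subgraph of $M(G)$. The crux is to show $\Phi(C) = M(G)$. Since $M(G) = M(C)$ is $\leq_R$-minimal below $C$, there is a right-resolver $\Delta: \Phi(C) \to M(G)$, and the composition $\Delta \circ \Phi|_C$ is a right-resolver $C \to M(G)$ whose state map is the surjection $\Sigma_C$ by uniqueness in Theorem \ref{thm-mg-sigma}. This forces $\partial \Delta$ to be surjective, so $V(\Phi(C)) = V(M(G))$, which combined with the principal subgraph relation in $M(G)$ and the sink-free assumption yields $\Phi(C) = M(G)$. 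Uniqueness in Theorem \ref{thm-mg-sigma} then gives $\Sigma_C = \Sigma_G|_{V(C)}$, and since $F_C(I) = F_G(I)$ for $I \in V(C)$, the bunchiness condition at $I$ in $C$ is literally the bunchiness condition at $I$ in $G$ and therefore inherits.

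The main obstacle is the identification $\Sigma_C = \Sigma_G|_{V(C)}$: it is not obvious a priori that $\Phi|_C$ must hit all of $M(G)$ rather than a proper principal subgraph, and ruling this out requires the assumption $M(C) = M(G)$ together with uniqueness from Theorem \ref{thm-mg-sigma}, applied to the composition through $\Phi(C)$. Everything else is routine bookkeeping on fiber partitions.
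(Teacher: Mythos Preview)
Your argument is correct. The paper itself gives no proof at all---it introduces the lemma with ``The following is evident but we state it explicitly for reference''---so there is nothing to compare against at the level of approach; you have simply filled in the details that the paper leaves to the reader. Your use of \cref{thm-mg-sigma} to pin down $\Sigma_G = \Sigma_H \circ \partial\Phi$ and then $\Sigma_C = \Sigma_G|_{V(C)}$ is exactly the right organizing principle, and the contrapositive lift for the almost bunchy case is clean. One minor remark: in the principal subgraph part, once $V(\Phi(C)) = V(M(G))$ and $\Phi(C)$ is principal in $M(G)$, equality $\Phi(C) = M(G)$ is immediate from the definition of principal subgraph; the sink-free hypothesis is not actually needed there.
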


\begin{rmk}
We briefly discuss examples of bunchy and almost bunchy graphs. The only strongly connected bunchy graphs of constant out-degree are the cycles of bunches. For a given $\leq_R$-minimal cycle of bunches $M$ with sequence of out-degrees $D_0, \dots, D_{p-1}$, the only strongly connected bunchy graphs $G$ with $M(G) = M$ are the graphs $O_{M,q}$ introduced in the previous section. 

A strongly connected almost bunchy (but not bunchy) graph of constant out-degree is a graph with a unique non-bunchy state, i.e. a state $I$ with $|F(I)| \geq 2$, together with a path from each element of $F(I)$ back to $I$. One example that has been considered in the literature is the graph $W_n$ studied in \cite{agv-10-lncs}, first discussed in \cite{wiel-50-mz}, and of interest due to its slow synchronization.

An almost bunchy graph $G$ can have at most $|V(M(G))|^2$ non-bunchy states, one for each ordered pair of $\Sigma_G$ fibers. One way to obtain an almost bunchy graph is to start with a bunchy graph and perform a sequence of in-splittings (recall \cref{lemma-amalg-stab}, and see also \cite{lm-95-intro}, \S 2.4), but not all in-splittings will preserve almost bunchiness, and not all almost bunchy graphs arise this way.
\end{rmk}

Bunchy and almost bunchy graphs are characterized in terms of automorphisms, with an important uniqueness consequence for the sets of transition maps induced on them by right-resolvers:

\begin{prop}\label{prop-alm-bunchy-equiv}
A graph $G$ is almost bunchy if and only if there is a unique right-resolver $G \to M(G)$ up to permutations of parallel edges: that is, if and only if, for any $\Phi_1, \Phi_2 \in \hom_R(G,M(G))$, there exist $\sigma \in P(G), \tau \in P(M(G))$ such that $\Phi_1 = \tau \circ \Phi_2 \circ \sigma$. Moreover, $G$ is bunchy if and only if we can take $\tau = \id$ regardless of $\Phi_1, \Phi_2$.
\end{prop}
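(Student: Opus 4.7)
The plan is to reduce to an edge-level question via \cref{thm-mg-sigma}, then handle the bunchy and almost bunchy biconditionals in turn. By \cref{thm-mg-sigma}, every $\Phi \in \hom_R(G, M(G))$ satisfies $\partial \Phi = \Sigma_G$, so any two right-resolvers $\Phi_1, \Phi_2 \in \hom_R(G, M(G))$ agree on states; and elements of $P(G), P(M(G))$ act trivially on states, so $\Phi_1 = \tau \circ \Phi_2 \circ \sigma$ is really a statement about edge maps. The organizing observation is that, by the right-resolving property, for each $I \in V(G)$ and each $J_0 \in F(\Sigma_G(I))$, both $\Phi_1$ and $\Phi_2$ restrict to bijections $\bigsqcup_{J \in F(I) \cap \Sigma_G^{-1}(J_0)} E_{IJ}(G) \to E_{\Sigma_G(I), J_0}(M(G))$, inducing on the target a partition with parts $\Phi_i(E_{IJ}(G))$ of sizes $|E_{IJ}(G)|$ indexed by $J$.

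For the forward direction when $G$ is bunchy, every $F(I) \cap \Sigma_G^{-1}(J_0)$ has at most one element, so $\Phi_1, \Phi_2$ give bijections $E_{IJ}(G) \to E_{\Sigma_G(I), \Sigma_G(J)}(M(G))$ between the same pair of sets for each $(I, J)$; setting $\sigma|_{E_{IJ}(G)} := \Phi_2^{-1} \circ \Phi_1|_{E_{IJ}(G)}$ and gluing yields a $\sigma \in P(G)$ with $\Phi_1 = \Phi_2 \circ \sigma$. For the almost bunchy case, for each $(I_0, J_0)$ at most one state $I^* \in \Sigma_G^{-1}(I_0)$ induces a nontrivial partition of $E_{I_0 J_0}(M(G))$; the $\Phi_1$- and $\Phi_2$-partitions from $I^*$ share the same multi-set of part sizes (both indexed by $F(I^*) \cap \Sigma_G^{-1}(J_0)$), so I will define $\tau|_{E_{I_0 J_0}(M(G))}$ as any bijection sending $\Phi_2(E_{I^* J}(G))$ to $\Phi_1(E_{I^* J}(G))$ for each $J$, and then set $\sigma|_{E_{IJ}(G)} := \Phi_2^{-1} \circ \tau^{-1} \circ \Phi_1|_{E_{IJ}(G)}$ for every relevant $(I, J)$.

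For the converse directions, both proved by contrapositive, I will use a common swap construction. Given $I, J_0$ with $|F(I) \cap \Sigma_G^{-1}(J_0)| \geq 2$, pick distinct $J_1, J_2$ in that set, any $\Phi_1 \in \hom_R(G, M(G))$, and edges $e \in E_{IJ_1}(G), e' \in E_{IJ_2}(G)$; let $\Phi_2$ agree with $\Phi_1$ except that it swaps the images of $e$ and $e'$ (this $\Phi_2$ remains a right-resolver). For the bunchy converse, any $\sigma \in P(G)$ preserves the set $E_{IJ_1}(G)$, so $\Phi_2(\sigma(E_{IJ_1}(G))) = \Phi_2(E_{IJ_1}(G)) \neq \Phi_1(E_{IJ_1}(G))$, ruling out $\Phi_1 = \Phi_2 \circ \sigma$ with $\tau = \id$. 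For the almost bunchy converse, non-almost-bunchiness furnishes a second state $I' \in \Sigma_G^{-1}(I_0) \setminus \{I\}$ with $|F(I') \cap \Sigma_G^{-1}(J_0)| \geq 2$; by possibly interchanging $I, I'$ I may assume $k = |F(I) \cap \Sigma_G^{-1}(J_0)| \leq k' = |F(I') \cap \Sigma_G^{-1}(J_0)|$. Averaging the equality $\sum_{J} |E_{IJ}(G)| = |E_{I_0 J_0}(M(G))| = \sum_{J'} |E_{I'J'}(G)|$ then yields $J_2$ and $J_2'$ with $|E_{I'J_2'}(G)| \leq |E_{IJ_2}(G)|$, and I will choose $\Phi_1$ (independently on $E_I$ and on $E_{I'}$) so that $\Phi_1(E_{I'J_2'}(G)) \subseteq \Phi_1(E_{IJ_2}(G))$ and $\Phi_1(e') \in \Phi_1(E_{I'J_2'}(G))$. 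Any candidate $\tau \in P(M(G))$ would then have to send $\Phi_2(E_{IJ_1}(G)) \ni \Phi_1(e')$ into $\Phi_1(E_{IJ_1}(G))$ while preserving $\Phi_1(E_{I'J_2'}(G))$ (since $\Phi_1 = \Phi_2$ on $E_{I'}$), so $\tau(\Phi_1(e'))$ would lie in $\Phi_1(E_{IJ_1}(G)) \cap \Phi_1(E_{I'J_2'}(G)) \subseteq \Phi_1(E_{IJ_1}(G)) \cap \Phi_1(E_{IJ_2}(G)) = \emptyset$, a contradiction.

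The hardest step will be this almost bunchy converse, where the right-resolver $\Phi_1$ must be chosen so that its two partitions of $E_{I_0 J_0}(M(G))$ (coming from $I$ and from $I'$) nest in an incompatible way; the averaging bound $\min_{J'} |E_{I'J'}(G)| \leq n/k' \leq n/k \leq \max_J |E_{IJ}(G)|$, where $n = |E_{I_0 J_0}(M(G))|$, is the combinatorial ingredient that lets one part of the $I'$-partition fit inside a single part of the $I$-partition.
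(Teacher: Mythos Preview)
Your forward directions (bunchy and almost bunchy) and the bunchy converse are essentially the paper's argument: reduce to edge maps via $\partial\Phi=\Sigma_G$, build $\tau$ from the unique non-bunchy state in each $(I_0,J_0)$ pair (or take $\tau=\id$ when none exists), and then recover $\sigma$ as $\Phi_2^{-1}\circ\tau^{-1}\circ\Phi_1$ block by block. Your swap construction for the bunchy converse is likewise the paper's.

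Your almost bunchy converse, however, takes a genuinely different and more careful route. The paper simply chooses $e_{i,1},e_{i,2}\in E_{I_i}(G)$ with distinct targets for $i=1,2$, fixes two labels $a_1,a_2\in E_{IJ}(M)$, declares $\Phi_1(e_{1,j})=a_j$ and $\Phi_2(e_{2,1})=a_2$, $\Phi_2(e_{2,2})=a_1$, and asserts without further argument that no $\sigma,\tau$ can exist, claiming the behaviour on other edges is irrelevant. That last claim is delicate: when the blocks $E_{I_iJ'}(G)$ have size greater than one, the unspecified values of $\Phi_1,\Phi_2$ on the remaining edges can in fact be chosen so that a suitable $\tau$ \emph{does} exist, so the paper's sentence is at best elliptical. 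You instead engineer an explicit obstruction: using the averaging bound $\min_{J'}|E_{I'J'}(G)|\le n/k'\le n/k\le\max_J|E_{IJ}(G)|$, you nest one block of the $I'$-partition inside one block of the $I$-partition under $\Phi_1$, then swap a single edge at $I$ so that any candidate $\tau$ must both preserve the small nested block (from the $I'$-constraint, since $\Phi_1=\Phi_2$ there) and move the pivot $\Phi_1(e')$ out of it (from the $I$-constraint). This yields a clean contradiction via $\Phi_1(E_{IJ_1})\cap\Phi_1(E_{IJ_2})=\emptyset$. Your argument is longer but is a complete proof where the paper's is a sketch; it also isolates the combinatorial content (two incompatible partitions of $E_{I_0J_0}(M)$ that no single permutation can reconcile) more transparently.
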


\begin{prop}\label{cor-alm-stab-unique}
Let $G$ be an almost bunchy graph. Let $\Phi_1, \Phi_2 \in \hom_R(G,M(G))$. Then $S_{\Phi_1} = S_{\Phi_2}$. In particular, $\sim_{\Phi_1} \, = \, \sim_{\Phi_2}$.
\end{prop}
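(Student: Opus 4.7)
The plan is to reduce the statement to the characterization of almost bunchy graphs given in \cref{prop-alm-bunchy-equiv} and then perform a direct transition-chasing argument, followed by the observation recorded right after the definition of synchronizer, namely that $\sim_\Phi$ depends on $\Phi$ only through $S_\Phi$.

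First, I would invoke \cref{prop-alm-bunchy-equiv} to produce $\sigma \in P(G)$ and $\tau \in P(M(G))$ such that $\Phi_1 = \tau \circ \Phi_2 \circ \sigma$. The essential feature of $\sigma$ and $\tau$, which I will use throughout, is that both act trivially on states; in particular $\partial \Phi_1 = \partial \Phi_2$, so the two right-resolvers partition $V(G)$ into the same fibers, and the $\sim_{\Phi_i}$ classes live inside the same fibers.

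Next I would show that the transition by a word $u \in L(M(G))$ under $\Phi_1$ agrees, as a partial map on $V(G)$, with the transition by $\tau^{-1}(u)$ under $\Phi_2$. Given $I \in (\partial \Phi_1)^{-1}(s(u))$, let $\gamma \in L_I(G)$ be the unique lift with $\Phi_1(\gamma)=u$. Then $\Phi_2(\sigma(\gamma)) = \tau^{-1}(u)$, and since $\sigma$ fixes states, $\sigma(\gamma) \in L_I(G)$ and $t(\sigma(\gamma)) = t(\gamma)$. Thus $I \cdot_{\Phi_2} \tau^{-1}(u) = t(\sigma(\gamma)) = t(\gamma) = I \cdot_{\Phi_1} u$, and the two maps coincide on their common domain $(\partial \Phi_1)^{-1}(s(u)) = (\partial \Phi_2)^{-1}(s(\tau^{-1}(u)))$. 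This proves $S_{\Phi_1} \subseteq S_{\Phi_2}$, and the symmetric argument (using $\Phi_2 = \tau^{-1} \circ \Phi_1 \circ \sigma^{-1}$) gives the reverse inclusion, so $S_{\Phi_1} = S_{\Phi_2}$.

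Finally, for the ``in particular'' clause, I would cite the remark immediately following \cref{prop-stab-sync-equiv}: if $S_{\Phi_1} = S_{\Phi_2}$ then $\sim_{\Phi_1} \, = \, \sim_{\Phi_2}$, since the definition of the stability relation refers to $\Phi$ only via the transitions $I \mapsto I \cdot u$. There is no real obstacle here; the only care needed is bookkeeping the domains of partial transition maps and verifying that replacing $u$ by $\tau^{-1}(u)$ keeps us working over the same state in $M(G)$, which is guaranteed by $\partial \tau = \id$.
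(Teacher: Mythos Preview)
Your proof is correct, but it takes a genuinely different route from the paper's. The paper does \emph{not} invoke \cref{prop-alm-bunchy-equiv}; instead it works directly from the definition of almost bunchy. For each ordered pair $I,J \in V(M(G))$ with $J \in F(I)$, it considers the set $T_{I,J,i}$ of single-edge transition maps $f_{a,i}: \Sigma_G^{-1}(I) \to \Sigma_G^{-1}(J)$, $a \in E_{IJ}(M(G))$, and shows $T_{I,J,1} = T_{I,J,2}$ by exploiting the fact that at most one state $I' \in \Sigma_G^{-1}(I)$ has $|F(I') \cap \Sigma_G^{-1}(J)| \geq 2$: for that exceptional $I'$ one identifies the edge $b = \Phi_2 \circ (\Phi_1|_{E_{I'}(G)})^{-1}(a)$ with $f_{a,1} = f_{b,2}$, and almost bunchiness forces agreement at all other states in the fiber. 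Since the $T_{I,J,i}$ generate $S_{\Phi_i}$, equality of the generating sets gives $S_{\Phi_1} = S_{\Phi_2}$.

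Your argument is cleaner and more conceptual: having already packaged the combinatorics into \cref{prop-alm-bunchy-equiv}, you get a single global relation $\Phi_1 = \tau \circ \Phi_2 \circ \sigma$ and a uniform bijection $u \mapsto \tau^{-1}(u)$ matching transition maps of $\Phi_1$ with those of $\Phi_2$. The paper's approach, by contrast, is self-contained (independent of \cref{prop-alm-bunchy-equiv}) and makes the edge-by-edge correspondence explicit, at the cost of repeating some of the case analysis already carried out in the proof of \cref{prop-alm-bunchy-equiv}. One small bibliographic slip: the observation that $\sim_\Phi$ depends only on $S_\Phi$ appears just \emph{before} \cref{prop-stab-sync-equiv}, right after the definition of synchronizer, not after it.
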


For the proofs of Propositions \ref{prop-alm-bunchy-equiv} and \ref{cor-alm-stab-unique}, see \S \ref{sec-bunchy-details}. The following definition is now justified.

\begin{defn}
Let $G$ be an almost bunchy graph. We denote by $\sim_G$ the unique relation on $V(G)$ with $\sim_G = \sim_{\Phi}$ for any $\Phi \in \hom_R(G,M(G))$.
\end{defn}


\subsection{Proof of the $O(G)$ conjecture in the bunchy case}

We now resolve the $O(G)$ conjecture in the almost bunchy case (which includes the bunchy case). This extends Corollary 4.3 in \cite{amt-95-etds}, which resolves the conjecture for graphs $G$ such that $M(G)$ has no parallel edges (so $G$ is trivially bunchy). The proof here is quite different from the proof in the no-parallel-edges case, and yields a polynomial-time algorithm (Algorithm \ref{alg-construct-stab}) for constructing $O(G)$.

\begin{thm}\label{thm-alm-og-exists}
Let $G$ be an almost bunchy graph and let $H \leq_S G$. If $H$ is $\leq_S$-minimal, then $H = G/\sim_G$. In particular, the set $\{ K \, | \, K \leq_S G  \}$ has a unique $\leq_S$-minimal element $O = G/\sim_G$.
\end{thm}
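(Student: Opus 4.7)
The plan is to prove two subclaims that together imply the theorem. First, $G/\sim_G$ itself lies in the set $\{K \, | \, K \leq_S G\}$. Second, every synchronizer $\Phi \in \hom_S(G,H)$ induces a synchronizer $\Phi^* \in \hom_S(H, G/\sim_G)$ with $\Phi^* \circ \Phi = \Psi_0$, where $\Psi_0: G \to G/\sim_G$ is the quotient map. Granting these, if $H \leq_S G$ is $\leq_S$-minimal, then $G/\sim_G \leq_S H$ via $\Phi^*$, and the first subclaim together with the minimality of $H$ forces $G/\sim_G = H$; the ``in particular'' clause then follows because the finite poset $\{K \, | \, K \leq_S G\}$ has at least one $\leq_S$-minimal element.

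The first subclaim is immediate from \cref{thm-struct-sync-comp}(2) applied to any $\Phi_0 \in \hom_R(G, M(G))$: the quotient map $G \to G/\sim_{\Phi_0}$ is synchronizing, and $\sim_{\Phi_0} \,=\, \sim_G$ by almost bunchiness.

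For the second subclaim, the preliminary step is to show that the $\partial \Phi$ fibers refine the $\sim_G$ classes on $V(G)$. Fix any $\Psi \in \hom_R(H, M(G))$, which exists because the uniqueness of $M(\cdot)$ applied to the chain $M(H) \leq_R H \leq_R G$ forces $M(H) = M(G)$. Then $\Psi \circ \Phi \in \hom_R(G, M(G))$, so \cref{cor-alm-stab-unique} gives $\sim_{\Psi \circ \Phi} \,=\, \sim_G$. By \cref{thm-struct-sync-comp}(1), each $\sim_\Phi$ class is the intersection of a $\sim_G$ class with a $\partial \Phi$ fiber; since $\Phi$ is synchronizing, each $\partial \Phi$ fiber is a $\sim_\Phi$ class, hence is contained in a single $\sim_G$ class.

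Now realize $G/\sim_G$ using the congruence associated with $\Phi_0 := \Psi \circ \Phi$, and let $\Psi_0: G \to G/\sim_G$ be the resulting quotient map. Define $\Phi^*: H \to G/\sim_G$ by $\partial \Phi^*(J) = [I]_{\sim_G}$ for any $I \in (\partial \Phi)^{-1}(J)$, and $\Phi^*(a) = \Psi_0(e)$ for any $e \in \Phi^{-1}(a)$. The state map is well-defined by the refinement; for the edge map, any $e, e' \in \Phi^{-1}(a)$ satisfy $\Phi_0(e) = \Psi(a) = \Phi_0(e')$ and $s(e) \sim_G s(e')$ by the refinement, so $e \sim_{\Phi_0} e'$. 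By construction $\Phi^*$ is a graph homomorphism with $\Phi^* \circ \Phi = \Psi_0$, and the restriction $\Phi^*|_{E_J(H)}$ factors as $\Psi_0|_{E_I(G)} \circ (\Phi|_{E_I(G)})^{-1}$ for any $I \in (\partial \Phi)^{-1}(J)$, hence is a bijection, so $\Phi^*$ is right-resolving. Applying \cref{thm-struct-sync-comp}(4) to $\Psi_0 = \Phi^* \circ \Phi$ then yields that $\Phi^*$ is synchronizing, since both $\Psi_0$ and $\Phi$ are. The main obstacle is ensuring that $\Phi^*$ is well-defined on edges: the edge-equivalence used to realize $G/\sim_G$ depends on a choice of reference right-resolver $G \to M(G)$, and we sidestep this by tying the reference to the given synchronizer by taking $\Phi_0 = \Psi \circ \Phi$.
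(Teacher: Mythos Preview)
Your proof is correct but takes a genuinely different route from the paper's. The paper invokes the $\leq_S$-minimality of $H$ at the outset: writing $\Psi \in \hom_S(G,H)$ and $\Delta \in \hom_R(H,M(G))$, minimality forces $\sim_{\Delta}$ to be trivial, whence \cref{thm-struct-sync-comp}(3) gives $\sim_{\Psi} = \sim_{\Delta \circ \Psi} = \sim_G$, and two applications of \cref{lemma-fiber-determ-img} identify $H$ with $G/\sim_G$ directly. You instead establish the stronger intermediate fact that $G/\sim_G \leq_S H$ for \emph{every} $H \leq_S G$, by explicitly constructing the synchronizer $\Phi^*$ using \cref{thm-struct-sync-comp}(1),(2),(4), and only invoke minimality at the end. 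Your approach thus shows that $G/\sim_G$ is the $\leq_S$-\emph{minimum} of the poset (a global lower bound, not merely the unique minimal element), which is a sharper conclusion, at the cost of the explicit construction of $\Phi^*$ and the attendant care about which realization of $G/\sim_G$ to use. The paper's approach is shorter and sidesteps that construction entirely via \cref{lemma-fiber-determ-img}.
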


\begin{proof}
Let $\Psi \in \hom_S(G,H)$ and $\Delta \in  \hom_R(H,M)$. We can naturally identify $V(H) = V(G) / \sim_{\Psi}$ by \cref{lemma-fiber-determ-img}. The hypothesis that $H$ is $\leq_S$-minimal implies that $\sim_{\Delta}$ is trivial, so $\sim_{\Psi} \, = \, \sim_{\Delta \circ \Psi}$ by \cref{thm-struct-sync-comp}(3). By \cref{cor-alm-stab-unique}, we have $\sim_{\Delta \circ \Psi} \, = \, \sim_G$. We can thus identify $V(H)$ with $V(G) / \sim_G \, = V(G/ \sim_G)$. Thus $H = G/ \sim_G$ by a second application of \cref{lemma-fiber-determ-img}.
\end{proof}

In the strongly connected case, we can apply \cref{lemma-min-diff-stab}, which is also used in the proof of the road colouring theorem, to say more.

\begin{prop}\label{prop-bfc-alm}
Let $G$ be a strongly connected almost bunchy graph. If $G$ is not bunchy, then $\sim_G$ is nontrivial.
\end{prop}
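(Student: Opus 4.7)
The plan is to exhibit two minimal images differing in exactly a pair of states, and then invoke \cref{lemma-min-diff-stab}. Fix a right-resolver $\Phi \in \hom_R(G,M(G))$; by \cref{cor-alm-stab-unique} it suffices to show that $\sim_\Phi$ is nontrivial. Write $M = M(G)$ and $\bar{I} = \Sigma_G(I)$, and similarly for other states. Since $G$ is not bunchy, some $I \in V(G)$ admits distinct followers $J_1, J_2 \in F(I)$ with a common image $\bar{J} := \Sigma_G(J_1) = \Sigma_G(J_2)$. Choose edges $e_i \in E_{IJ_i}(G)$; their images $a_i := \Phi(e_i)$ are then distinct parallel edges in $M$ from $\bar{I}$ to $\bar{J}$, with $I \cdot a_i = J_i$ by construction.

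By strong connectedness and \cref{lemma-sc-min-img}, all minimal images share a common size $r$, and there exists a minimal image $U$ with $I \in U$; since every element of $U$ lies over the terminal state of the defining word, $U \subseteq \Sigma_G^{-1}(\bar{I})$. Consider the two candidate sets $U_i := U \cdot a_i$. The defining minimality of $U$ immediately gives $|U_i| = r$ and $|U_i \cdot v| = r$ for every $v \in L_{\bar{J}}(M)$ (using that $a_i v$ extends the defining word for $U$), so each $U_i$ is itself a minimal image.

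The crux is a pointwise comparison of $U_1$ and $U_2$. For every $I' \in U$ with $I' \neq I$, almost bunchiness of $G$, combined with the observation that $I$ is already the unique non-bunchy state for the ordered pair $(\bar{I}, \bar{J})$, forces $|F(I') \cap \Sigma_G^{-1}(\bar{J})| \leq 1$; hence the unique edges at $I'$ mapping to $a_1$ and $a_2$ have the same target, so $I' \cdot a_1 = I' \cdot a_2$. Writing $T = \{I' \cdot a_1 : I' \in U \setminus \{I\}\}$, we get $U_1 = T \cup \{J_1\}$ and $U_2 = T \cup \{J_2\}$, and the size count $|U_i| = r$ forces $J_1, J_2 \notin T$, whence $U_1 \Delta U_2 = \{J_1, J_2\}$. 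Applying \cref{lemma-min-diff-stab} yields $J_1 \sim_\Phi J_2$ with $J_1 \neq J_2$, so $\sim_\Phi$, and hence $\sim_G$, is nontrivial.

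I do not foresee any substantial obstacle: the only real content is recognizing that almost bunchiness provides exactly the rigidity needed so that every state of $U$ other than $I$ contributes identically under the two parallel edges $a_1, a_2$, collapsing the symmetric difference $U_1 \Delta U_2$ precisely to the intended pair. The rest is bookkeeping with the minimal image apparatus already developed in \S \ref{sec-stab-sync}.
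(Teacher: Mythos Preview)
Your proposal is correct and matches the paper's own proof essentially line for line: both pick a non-bunchy state, take a minimal image through it via \cref{lemma-sc-min-img}, push that image along the two parallel edges $a_1,a_2$, use almost bunchiness to see that every other state of $U$ moves identically under $a_1$ and $a_2$, deduce $|U_1 \Delta U_2| = 2$ from minimality, and conclude with \cref{lemma-min-diff-stab}. The only differences are cosmetic (notation and the explicit verification that the $U_i$ are again minimal images).
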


\begin{proof}
Let $I,J \in V(M(G))$ and $I' \in \Sigma_G^{-1}(I)$ with $|F(I') \cap \Sigma_G^{-1}(J)| \geq 2$. Let $J_1, J_2 \in F(I') \cap \Sigma_G^{-1}(J)$, $J_1 \neq J_2$, and let $e_i \in E_{I' J_i}(G)$. Let $\Phi \in \hom_R(G,M(G))$ and let $a_i = \Phi(e_i)$. By strong connectedness and \cref{lemma-sc-min-img}, there exists a minimal image $U \se \Sigma_G^{-1}(I)$ with $I' \in U$. Let $U_0 = U \sm \{ I' \}$. Then $U_0 \cdot a_1 = U_0 \cdot a_2$ since $G$ is almost bunchy. Moreover, $J_i = I' \cdot a_i \notin U_0 \cdot a_i$ (otherwise, minimality would be contradicted). Thus $(U \cdot a_1) \Delta (U \cdot a_2) = \{ J_1, J_2 \}$. By \cref{lemma-min-diff-stab}, $J_1 \sim_{\Phi} J_2$. 
\end{proof}

\begin{cor}\label{cor-alm-og-bunchy}
Let $G$ be a strongly connected almost bunchy graph. Then $O(G)$ is bunchy.
\end{cor}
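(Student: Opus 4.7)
The plan is to bootstrap the machinery for almost bunchy graphs by applying \cref{prop-bfc-alm} not to $G$ itself, but to $O(G)$, and then derive a contradiction from the $\leq_S$-minimality asserted by \cref{thm-alm-og-exists}.

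First, I would check that $O(G)$ inherits the hypotheses of \cref{prop-bfc-alm}. By \cref{thm-alm-og-exists}, $O(G) = G/\sim_G$ exists and is a factor of $G$ via a synchronizer (hence a right-resolver). Since the class of almost bunchy graphs is closed under right-resolvers by \cref{lemma-bunchy-factor-princ}, $O(G)$ is almost bunchy. Since every factor of a strongly connected graph is strongly connected (noted in \S \ref{sec-graph-homo}), $O(G)$ is strongly connected as well.

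Next, I would argue by contradiction. Suppose $O(G)$ is not bunchy. Then \cref{prop-bfc-alm} applied to $O(G)$ yields that $\sim_{O(G)}$ is nontrivial, so the quotient $O(G)/\sim_{O(G)}$ has strictly fewer states than $O(G)$, and in particular is not isomorphic to $O(G)$. The quotient map $O(G) \to O(G)/\sim_{O(G)}$ is a synchronizer by \cref{thm-struct-sync-comp}(2) (applied with any $\Phi \in \hom_R(O(G),M(O(G)))$ as the composed right-resolver), and composing it with the synchronizer $G \to O(G)$ yields, via \cref{thm-struct-sync-comp}(4), a synchronizer $G \to O(G)/\sim_{O(G)}$. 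Therefore $O(G)/\sim_{O(G)} \leq_S G$, and $O(G)/\sim_{O(G)} \leq_S O(G)$, but $O(G)/\sim_{O(G)} \neq O(G)$, contradicting the $\leq_S$-minimality of $O(G)$ in $\{K \,|\, K \leq_S G\}$ from \cref{thm-alm-og-exists}.

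I do not anticipate a serious obstacle: all the required closure properties (almost bunchiness under right-resolvers, strong connectedness under factors, composition of synchronizers, the quotient by $\sim_{O(G)}$ being synchronizing) are already packaged in the cited results. The only conceptual move is to recognize that the almost bunchy hypothesis is stable enough under the $O(\cdot)$ construction to permit a second application of \cref{prop-bfc-alm}, and that \cref{thm-alm-og-exists} then forbids any further synchronizing quotient.
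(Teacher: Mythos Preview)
Your proof is correct, but it takes a different route from the paper's. The paper argues by induction on $|V(G)|$: if $G$ is not bunchy, \cref{prop-bfc-alm} gives $\sim_G$ nontrivial, so $G/\sim_G$ is strictly smaller and still almost bunchy, and the inductive hypothesis (together with $O(G)=O(G/\sim_G)$, via \cref{lemma-poset}) finishes. You instead pass immediately to $O(G)$, observe that it inherits strong connectedness and almost bunchiness, and apply \cref{prop-bfc-alm} directly to $O(G)$; a nontrivial $\sim_{O(G)}$ would produce a strictly smaller synchronizing factor, contradicting the $\leq_S$-minimality furnished by \cref{thm-alm-og-exists}. Your argument is more economical---it avoids the induction and the appeal to \cref{lemma-poset}---while the paper's version makes the recursive descent $G \to G/\sim_G \to \cdots \to O(G)$ explicit, which aligns with the algorithmic viewpoint emphasized elsewhere (e.g.\ Algorithm~\ref{alg-construct-stab}).
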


\begin{proof}
If $|V(G)| = 1$ then the claim is clearly true. Suppose that the conclusion is true for all almost bunchy $G$ with $|V(G)| \leq_R N$, and let $G$ be almost bunchy with $|V(G)| = N+1$. If $G$ is bunchy, then $O(G)$ is clearly bunchy by \cref{lemma-bunchy-factor-princ}. If $G$ is not bunchy, then $\sim_{G}$ is nontrivial by \cref{prop-bfc-alm}, so $|V(G/\sim_{G})| \leq_R N$. Moreover, since $G/\sim_G \, \leq_S G$, it follows that $O(G) = O(G/\sim_G)$ is bunchy by the inductive hypothesis and \cref{lemma-bunchy-factor-princ}.
\end{proof}


\subsection{Universal property of the fiber product}\label{subsec-fiber-univ-prop}

We recall a standard construction known as the fiber product, and derive several new properties. Chief among these is the one exhibited in \cref{thm-univ-prop}, which is an analogue of the universal property often enjoyed by the fiber product, or pullback, in other categories (see e.g. \cite{er-17-dover}, Definition 3.1.15 and subsequent discussion).

\begin{defn}[fiber product]\label{defn-fiber}
Let $H_1, H_2,K$ be graphs and let $\Psi_i: H_i \to K$ be graph homomorphisms. The \textit{fiber product} of $\Psi_1, \Psi_2$ is the graph $P = H_1 \times_{\Psi_1, \Psi_2} H_2$ where
\begin{align*}
    V(P) &= \bigsqcup_{I \in V(K)} (\partial \Psi_1)^{-1}(I) \times (\partial \Psi_2)^{-1}(I) \\
    E(P) &= \{ (e_1, e_2) \, | \, e_i \in E(H_i), \, \Psi_1(e_1) = \Psi_2(e_2)   \}
\end{align*}
together with the coordinate projections $\hat{\Psi}_i: P \to H_i$. We write $\Psi_P = \Psi_i \circ \Hat{\Psi}_i : P \to K$.
\end{defn}

\begin{rmk}
To see that $\Psi_1 \circ \Hat{\Psi}_1 = \Psi_2 \circ \Hat{\Psi}_2$ and thus that $\Phi_P$ is well-defined, note that for every $(I_1, I_2) \in V(P)$ and every $(e_1, e_2) \in E_{(I_1, I_2)}(P)$, we have, by the definition of $P$,
\[
\Psi_1 \circ \Hat{\Psi}_1 (e_1, e_2) = \Psi_1(e_1) = \Psi_2(e_2) = \Psi_2 \circ \Hat{\Psi}_2 (e_1, e_2)
\]
\end{rmk}

\begin{rmk}
Observe that the $\Hat{\Psi}_i$ are surjective (respectively, right-resolving) when the $\Psi_i$ are surjective (respectively, right-resolving). Moreover, if $C$ is a principal subgraph of $P$ such that the restricted state maps $\partial \Hat{\Psi}_i|_{V(C)} : V(C) \to V(H_i)$ are surjective, then $H_i \leq_R C$, indeed $\Hat{\Psi}_i|_C \in \hom_R(C,H_i)$. In particular, this condition is satisfied if the $H_i$ are strongly connected and $C$ is a principal component of $P$. 
\end{rmk}

\begin{rmk}
Often the convention is taken that $V(P) = V(H_1) \times V(H_2)$. However, all of the elements of the full Cartesian product that are not elements of $V(P)$, as we have defined it, would be isolated states, and in particular would be sinks. Our definition has the feature that the fiber product of two sink-free graphs (or rather, of two right-resolvers defined on such graphs) is also sink-free.
\end{rmk}

We now state the universal property of the fiber product. Compare with a similar diagram in \cite{amt-95-etds} (p. 289). See \S \ref{subsec-univ-prop-proof} for the proof.

\begin{thm}\label{thm-univ-prop}
Let $H_1, H_2$ be bunchy graphs with $M(H_1) = M(H_2) = M$. Let $\Psi_i \in \hom_R(H_i, M)$ be right-resolvers, and let $P = H_1 \times_{\Psi_1, \Psi_2} H_2$. Let $G$ be a common right-resolving extension of $H_1, H_2$ via $\Phi_i \in \hom_R(G, H_i)$. Then there exist a principal subgraph $C$ of $P$ and right-resolvers $\Delta_i \in \hom_R(G,C)$ such that $\Phi_i = \hat{\Psi}_i \circ \Delta_i$ and $\partial \Delta_1 = \partial \Delta_2$. In particular, $H_i \leq_R C \leq_R G$, with $\Hat{\Psi}_i|_C \in \hom_R(C,H_i)$.
\end{thm}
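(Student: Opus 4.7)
My approach is to construct $\Delta_1, \Delta_2$ by a pairing: $\Delta_i$ sends each edge $e \in E(G)$ to an edge of $P$ whose $i$-th coordinate is $\Phi_i(e)$ and whose other coordinate is the unique compatible edge in $H_{3-i}$. The principal subgraph $C$ is then the subgraph of $P$ spanned by the resulting image on states, and bunchiness of the $H_j$'s is exactly what makes this coherent.

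To set up the common state map, note that $M$ is $\leq_R$-minimal and $M \leq_R G$ (via either composition $\Psi_i \circ \Phi_i$), so uniqueness in \cref{thm-mg-sigma} forces $M(G) = M$. Then \cref{thm-mg-sigma} also gives $\partial \Psi_1 \circ \partial \Phi_1 = \partial \Psi_2 \circ \partial \Phi_2 = \Sigma_G$, so the assignment $\partial \Delta(I) := (\partial \Phi_1(I), \partial \Phi_2(I))$ lands in $V(P)$, and I take this as $\partial \Delta_1 = \partial \Delta_2$. On edges, a naive pairing $e \mapsto (\Phi_1(e), \Phi_2(e))$ need not land in $E(P)$, because the two right-resolvers $\Psi_i \circ \Phi_i : G \to M$ can disagree as edge maps. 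Instead, I set $\Delta_1(e) := (\Phi_1(e), e_2')$, where $e_2'$ is the unique edge in $E_{\partial \Phi_2(s(e))}(H_2)$ satisfying $\Psi_2(e_2') = \Psi_1(\Phi_1(e))$ (unique since $\Psi_2$ is right-resolving), and define $\Delta_2$ symmetrically. By construction, $\hat\Psi_i \circ \Delta_i = \Phi_i$.

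The main obstacle is verifying that $\Delta_1$ (and symmetrically $\Delta_2$) is actually a graph homomorphism, i.e.\ that $t(\Delta_1(e)) = \partial \Delta(t(e))$. The first coordinate matches because $\Phi_1$ preserves targets, but for the second coordinate I need $t(e_2') = \partial \Phi_2(t(e))$. A short calculation shows both are followers of $\partial \Phi_2(s(e))$ in $H_2$ whose images under $\partial \Psi_2$ equal the common state $\Sigma_G(t(e)) \in V(M)$. This is where bunchiness of $H_2$ is indispensable: by \cref{defn-bunchy}, $\partial \Psi_2 = \Sigma_{H_2}$ restricts to a bijection $F(\partial \Phi_2(s(e))) \to F(\Sigma_G(s(e)))$, forcing the two candidates to coincide. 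Without bunchiness the two follower states could be distinct parallel-edge partners, and $\Delta_1$ would fail to respect targets.

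Once $\Delta_i$ is a well-defined homomorphism, the rest is a counting exercise. Setting $V(C) := \partial \Delta(V(G))$ and taking $C$ to be the principal subgraph of $P$ on these vertices, I observe that $|E_{\partial \Delta(I)}(P)| = |E_{\Sigma_G(I)}(M)| = |E_I(G)|$ for every $I \in V(G)$, so $\Delta_i|_{E_I(G)}$ is an injection between sets of equal cardinality, hence a bijection onto $E_{\partial \Delta(I)}(P) = E_{\partial \Delta(I)}(C)$. This simultaneously shows that targets of edges in $C$ lie in $V(C)$ (so $C$ is genuinely closed under target) and that $\Delta_i \in \hom_R(G, C)$. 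Finally, surjectivity of $\partial \Phi_i = \partial \hat\Psi_i \circ \partial \Delta$ makes $\partial \hat\Psi_i|_{V(C)}$ surjective onto $V(H_i)$, so $\hat\Psi_i|_C \in \hom_R(C, H_i)$ by the remark following \cref{defn-fiber}, completing the chain $H_i \leq_R C \leq_R G$.
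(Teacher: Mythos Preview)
Your proof is correct and follows essentially the same route as the paper's: the same state map $I' \mapsto (\partial\Phi_1(I'), \partial\Phi_2(I'))$, the same edge maps (the paper writes $\Delta_i|_{A_{I',J}} = (\hat\Psi_i|_{E_{(I_1,I_2)(J_1,J_2)}(P)})^{-1} \circ \Phi_i|_{A_{I',J}}$, which unwinds to exactly your explicit pair $(\Phi_i(e), e'_{3-i})$), and the same use of bunchiness to pin down the target of the second coordinate. Your cardinality argument for right-resolvingness and principality is a slightly cleaner packaging of the paper's explicit enumeration of $E_{IJ}(M)$, but there is no substantive difference.
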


\begin{rmk}
The bunchiness hypothesis on the $H_i$ cannot be dropped, as the following construction illustrates. Let $G$ be a graph and let $\Phi_1, \Phi_2 \in \aut(G)$. In the notation of the the theorem, we will take $H_1 = H_2 = G$. (Recall that any automorphism is right-resolving.) Let $M = M(G)$. Let $\Psi_i \in \hom_R(G, M)$ and $P = G \times_{\Psi_1, \Psi_2} G$. Let $C$ be a principal subgraph of $P$ with $\Hat{\Psi}_i|_V(C)$ surjective, and let $\Delta_i \in \hom_R(G,C)$ with $\Phi_i = \Hat{\Psi}_i|_C \circ \Delta_i$. Then $\Hat{\Psi}_i|_C$ and $\Delta_i$ are isomorphisms, since they compose to an isomorphism. In particular, since $\Psi_1 \circ \Hat{\Psi}_1|_C = \Psi_2 \circ \Hat{\Psi}_2|_C$, we have $\Psi_1 = \Psi_2 \circ \tau$ where $\tau = \Hat{\Psi}_2|_C \, \circ \left( \Hat{\Psi}_1|_C  \right)^{-1}$ is an isomorphism. In other words, any two elements of $\hom_R(G, M)$ agree up to an automorphism of $G$. That condition always holds when $G$ is bunchy (see \cref{prop-alm-bunchy-equiv}), but fails in general.
\end{rmk}

We now give two applications of the universal property. The first, \cref{prop-bunchy-same-o}, is applied in \cref{prop-bunchy-implies-o}, which is the main motivation for the bunchy factor conjecture. See \S \ref{subsec-fiber-max-bunchy} for the proof of \cref{lemma-fiber-bunchy}.

\begin{lemma}\label{lemma-fiber-bunchy}
Let $H_1, H_2$ be bunchy graphs with $M(H_1) = M(H_2) = M$. Let $\Psi_i \in \hom_R(H_i, M)$, and let $P = H_1 \times_{\Psi_1, \Psi_2} H_2$. Then $P$ is bunchy. In particular, if $C$ is a principal subgraph of $P$ such that the restrictions $\partial \Hat{\Psi}_i|_{V(C)} : V(C) \to V(H_i)$ are surjective, then $C$ is bunchy.
\end{lemma}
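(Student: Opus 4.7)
The plan is to verify bunchiness of $P$ state by state, using $\Psi_P = \Psi_i \circ \hat{\Psi}_i \in \hom_R(P, M)$ as a stand-in for $\Sigma_P$ and reading off follower sets directly from the fiber-product definition, with the bunchiness of the $H_i$ doing all the work.

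The first step is to pin down $\Sigma_P$. Since $M = M(H_1) = M(H_2)$ is $\leq_R$-minimal and $\Psi_P$ witnesses $M \leq_R P$, the uniqueness clause of \cref{thm-mg-sigma} forces $M(P) = M$ and $\Sigma_P = \partial \Psi_P$. Consequently, checking bunchiness at a state $(I_1, I_2) \in V(P)$ --- writing $I = \partial \Psi_P(I_1, I_2)$ --- reduces to showing that $\partial \Psi_P$ restricts to a bijection $F((I_1, I_2)) \to F(I)$.

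Next I would unpack $F((I_1, I_2))$. The fiber-product definition together with the right-resolving property places the outgoing edges of $(I_1, I_2)$ in canonical bijection with $E_I(M)$: to $a \in E_I(M)$ corresponds $(e_1^a, e_2^a)$, where $e_i^a$ is the unique lift of $a$ in $E_{I_i}(H_i)$ under $\Psi_i$. The heart of the argument is then the observation that, for any $a \in E_I(M)$ with $t(a) = J$, the target $t(e_i^a)$ lies in $F(I_i) \cap \Sigma_{H_i}^{-1}(J)$, which is a singleton $\{J_i\}$ by bunchiness of $H_i$ at $I_i$ (recall $\partial\Psi_i = \Sigma_{H_i}$). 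Hence $t(e_i^a)$ depends only on $J$, giving $F((I_1, I_2)) = \{(J_1, J_2) : J \in F(I)\}$, and the induced map $(J_1, J_2) \mapsto J$ is a bijection onto $F(I)$, which is exactly bunchiness at $(I_1, I_2)$.

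For the ``in particular'' clause, the surjectivity of $\partial \hat{\Psi}_i|_{V(C)}$ combined with the principal-subgraph hypothesis promotes $\hat{\Psi}_i|_C$ to a right-resolver $C \to H_i$ (as observed in the remark following \cref{defn-fiber}), so $M \leq_R H_i \leq_R C$; uniqueness in \cref{thm-mg-sigma} again gives $M(C) = M = M(P)$, and \cref{lemma-bunchy-factor-princ} applied to $C \subseteq P$ completes the argument. I do not expect any serious obstacle: the only delicate moment is the initial identification $\Sigma_P = \partial \Psi_P$, after which the fiber-product definition makes the follower-set computation essentially automatic, with bunchiness of each $H_i$ used precisely once to collapse the fiber intersection to a single point.
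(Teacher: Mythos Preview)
Your proposal is correct and follows essentially the same route as the paper: both arguments reduce bunchiness at $(I_1,I_2)$ to the observation that, because each $H_i$ is bunchy, the target of an outgoing edge from $I_i$ is determined by its image in $M$, so the follower set of $(I_1,I_2)$ is in bijection with $F(I)$ via $\partial\Psi_P$. You are in fact slightly more careful than the paper in making the identification $\Sigma_P=\partial\Psi_P$ explicit (the paper silently uses it), and for the principal-subgraph clause you invoke \cref{lemma-bunchy-factor-princ} where the paper gives a one-line direct argument---both are fine.
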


\begin{prop}\label{prop-bunchy-same-o}
Let $G$ be a graph. Let $H_1, H_2 \leq_S G$ be bunchy. Then $O(H_1) = O(H_2)$, i.e. $G$ has at most one $\leq_S$-minimal bunchy synchronizing factor.
\end{prop}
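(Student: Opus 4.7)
The plan is to use the universal property (\cref{thm-univ-prop}) to build a common bunchy synchronizing extension $C$ of $H_1$ and $H_2$ that factors through $G$, and then invoke the uniqueness clause of \cref{thm-alm-og-exists} to force $O(H_1) = O(C) = O(H_2)$.

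First I would observe that $M(H_1) = M(G) = M(H_2) =: M$. Indeed, since $\leq_R$ is transitive and $H_i \leq_R G$, we have $M(H_i) \leq_R G$; the uniqueness of $\leq_R$-minimal elements asserted by \cref{thm-mg-sigma} then forces $M(H_i) = M(G)$. Fix right-resolvers $\Psi_i \in \hom_R(H_i, M)$ and synchronizers $\Phi_i \in \hom_S(G, H_i)$ (which exist by hypothesis), form the fiber product $P = H_1 \times_{\Psi_1, \Psi_2} H_2$, and apply \cref{thm-univ-prop} to the common right-resolving extension $G$ of $H_1, H_2$. This produces a principal subgraph $C$ of $P$ and right-resolvers $\Delta_i \in \hom_R(G, C)$ satisfying $\Phi_i = \hat{\Psi}_i|_C \circ \Delta_i$.

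From here the conclusion is quick. By \cref{lemma-fiber-bunchy}, $C$ is bunchy, hence almost bunchy. Applying \cref{thm-struct-sync-comp}(4) to the factorization $\Phi_i = \hat{\Psi}_i|_C \circ \Delta_i$, the synchronization of $\Phi_i$ forces $\hat{\Psi}_i|_C$ to be synchronizing, so $H_i \leq_S C$. In particular, $O(H_i) \leq_S H_i \leq_S C$ by transitivity of $\leq_S$, while $O(H_i)$ is $\leq_S$-minimal by its definition. Thus \cref{thm-alm-og-exists} applied to the almost bunchy graph $C$ identifies $O(H_i)$ with $C/\sim_C$, independently of $i$. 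Hence $O(H_1) = O(H_2)$.

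The main conceptual step is the fiber product construction: the bunchiness hypothesis on the $H_i$ is used both to invoke the universal property (as the remark following \cref{thm-univ-prop} shows it cannot be dropped) and, via \cref{lemma-fiber-bunchy}, to conclude that $C$ itself is bunchy — which is precisely what makes the uniqueness clause of \cref{thm-alm-og-exists} applicable to $C$. Everything else is a routine chain of transitivity and synchronization-preservation observations, so I do not anticipate any serious obstacle beyond correctly assembling these pieces.
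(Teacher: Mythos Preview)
Your proposal is correct and follows essentially the same approach as the paper: apply \cref{thm-univ-prop} to the fiber product $P=H_1\times_{\Psi_1,\Psi_2}H_2$ to obtain the bunchy principal subgraph $C$, use the factorization $\Phi_i=\hat\Psi_i|_C\circ\Delta_i$ together with \cref{thm-struct-sync-comp}(4) to get $H_i\leq_S C$, and then invoke \cref{thm-alm-og-exists} on $C$. The only cosmetic difference is that the paper phrases the final step via \cref{lemma-poset} (since $O(C)$ exists, $H_i\leq_S C$ forces $O(H_i)=O(C)$), whereas you apply the uniqueness clause of \cref{thm-alm-og-exists} directly to identify each $O(H_i)$ with $C/\!\sim_C$; these are equivalent.
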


\begin{proof}
Let $M = M(G)$, let $\Phi_i \in \hom_S(G,H_i)$, and let $\Psi_i \in \hom_R(H_i, M)$. Let $P = H_1 \times_{\Psi_1, \Psi_2} H_2$. By \cref{thm-univ-prop}, there is a principal subgraph $C$ of $P$ admitting $\Delta_i \in \hom_R(G,C)$ such that $\Phi_i = \hat{\Psi}_i|_C \circ \Delta_i$. Since each $\Phi_i$ is synchronizing, each restriction $\hat{\Psi}_i|_C$ is synchronizing as well, so $H_i \leq_S C$. Since $C$ is bunchy by \cref{lemma-fiber-bunchy}, we know that $O(C)$ is well-defined, and thus, by \cref{lemma-poset}, we have $O(H_1) = O(C) = O(H_2)$ as claimed.
\end{proof}

For the second application of the universal property, recall that the only strongly connected bunchy graphs of constant out-degree are the cycles of bunches. In particular, by the periodic road colouring theorem, for any strongly connected graph $G$ of constant out-degree $D$ and period $p$, the unique maximal bunchy right-resolving factor of $G$, namely $O_{D,p}$, is a synchronizing factor of $G$ (and is indeed equal to $O(G)$). We now show that every graph $G$ has a unique maximal bunchy right-resolving factor $B(G)$. The construction is similar to that of the auxiliary graph $\Tilde{G}$ in \cite{amt-95-etds}, \S 5. See \S \ref{subsec-fiber-max-bunchy} for the proof, as well as an explicit construction of $B(G)$ yielding a polynomial-time algorithm (Algorithm \ref{alg-construct-bg}).

\begin{prop}\label{prop-bg-exists}
Let $G$ be a graph. 

\begin{enumerate}[label=(\arabic*)]
    \item The set $\{ H \leq_R G \, | \, H \text{ is bunchy}  \}$ has a unique $\leq_R$-maximal element $B=B(G)$.
    
    \item Let $H \leq_R G$ be bunchy and $\Phi \in \hom_R(G,H)$. Then $\Phi$ factors through $B$, i.e. there exist $\Delta \in \hom_R(G,B)$, $\Theta \in \hom_R(B,H)$ such that $\Phi = \Theta \circ \Delta$. 
\end{enumerate}
\end{prop}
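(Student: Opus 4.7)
The plan is to exploit the universal property of the fiber product (\cref{thm-univ-prop}) together with \cref{lemma-fiber-bunchy} to establish a ``join'' operation on the set of bunchy right-resolving factors of $G$; both parts then follow from finiteness. Observe first that this set is nonempty, since $M(G)$ is trivially bunchy ($\Sigma_{M(G)}$ is the identity) and $M(G) \leq_R G$ by definition. Moreover, any bunchy factor $H \leq_R G$ satisfies $M(H) = M(G) =: M$: transitivity of $\leq_R$ gives $M(H) \leq_R G$, and uniqueness in \cref{thm-mg-sigma} forces equality. Hence any two bunchy factors $H_1, H_2$ of $G$ come equipped with canonical right-resolvers $\Psi_i \in \hom_R(H_i, M)$ to a common base.

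The key step is the following join construction. Given bunchy factors $H_1, H_2 \leq_R G$ via $\Phi_i \in \hom_R(G, H_i)$, form the fiber product $P = H_1 \times_{\Psi_1, \Psi_2} H_2$ and apply \cref{thm-univ-prop} to $\Phi_1, \Phi_2$: this yields a principal subgraph $C$ of $P$ and right-resolvers $\Delta_i \in \hom_R(G, C)$ with $\Phi_i = \hat{\Psi}_i|_C \circ \Delta_i$, where $\hat{\Psi}_i|_C \in \hom_R(C, H_i)$. By \cref{lemma-fiber-bunchy}, $C$ is bunchy, so $C$ is a bunchy right-resolving factor of $G$ that lies above both $H_1$ and $H_2$ in the $\leq_R$ order.

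Part (1) follows by finiteness: any right-resolver $G \to H$ satisfies $|V(H)| \leq |V(G)|$ and $|E(H)| \leq |E(G)|$, so there are only finitely many isomorphism classes of right-resolving factors of $G$. Combined with the join construction, this produces a unique $\leq_R$-maximum $B = B(G)$ in $\{ H \leq_R G \mid H \text{ is bunchy} \}$. For part (2), apply the join construction to $B$ and a given bunchy factor $H \leq_R G$ with map $\Phi$: the resulting bunchy factor $C$ satisfies $B \leq_R C \leq_R G$, hence $C = B$ by maximality of $B$, and the factorization $\Phi = \hat{\Psi}_H|_C \circ \Delta_H$ supplied by \cref{thm-univ-prop} becomes the desired $\Phi = \Theta \circ \Delta$ with $\Theta = \hat{\Psi}_H|_B \in \hom_R(B, H)$ and $\Delta = \Delta_H \in \hom_R(G, B)$.

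Essentially all the real work is packaged inside \cref{thm-univ-prop} and \cref{lemma-fiber-bunchy}; the remaining argument is a formal consequence of a join and finiteness. The main obstacle from this point on is algorithmic rather than conceptual: iterating the join to compute $B(G)$ naively is wasteful, so for the polynomial-time Algorithm \ref{alg-construct-bg} one would presumably want to recast this existence proof as an explicit construction of $B(G)$ as the quotient of $G$ by a suitable congruence (one encoding ``being forced to coincide by bunchiness''), which appears to require additional work beyond the argument above.
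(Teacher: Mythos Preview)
Your proof is correct and follows essentially the same route as the paper's: both arguments use \cref{thm-univ-prop} and \cref{lemma-fiber-bunchy} to produce, from two bunchy factors $H_1,H_2 \leq_R G$, a common bunchy upper bound $C \leq_R G$, and then deduce (1) from maximality and (2) by applying this join to $B$ and the given $H$. The only cosmetic difference is that the paper phrases (1) as ``any two $\leq_R$-maximal bunchy factors coincide'' rather than ``join plus finiteness yields a maximum,'' and your closing remarks about the explicit congruence construction correctly anticipate \cref{prop-construct-bg}.
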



\section{The $O(G)$ conjecture and bunchy synchronizing factors}\label{sec-conjec}

As we have seen, the $O(G)$ conjecture holds for strongly connected graphs $G$ such that $M(G)$ is a cycle of bunches, and for almost bunchy graphs (including the bunchy graphs) by \cref{thm-alm-og-exists}. Moreover, for strongly connected almost bunchy graphs, and strongly connected graphs that factor onto cycles of bunches, we know that there is a bunchy synchronizing factor, which we show inductively by assuming non-bunchiness and obtaining a right-resolver with a nontrivial stability relation. It seems plausible that, if the $O(G)$ conjecture is true, then it can be proven by a similar approach: assume non-bunchiness, find a right-resolver with nontrivial stability relation, recursively find a bunchy synchronizing factor, and apply \cref{prop-bunchy-same-o}. The next proposition gives several equivalent formulations of the hypothesis that this approach can be made to work. See \S \ref{subsec-fiber-fact-ext} for the proof.

\begin{prop}\label{prop-bfc-equiv-new}
The following statements are equivalent.

\begin{enumerate}[label=(\arabic*)]
    \item Any strongly connected $\leq_S$-minimal graph is bunchy.

    \item For any strongly connected graph $G$, there exists some bunchy $H \leq_S G$.
    
    \item For any non-bunchy strongly connected graph $G$, there exists some $\Phi \in \hom_R(G,M(G))$ with $\sim_{\Phi}$ nontrivial.
    
    \item For any strongly connected graph $G$, $B(G) \leq_S G$.
\end{enumerate}
\end{prop}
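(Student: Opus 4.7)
My plan is to prove the chain of implications $(1) \Rightarrow (2) \Rightarrow (4) \Rightarrow (3) \Rightarrow (1)$, with the bulk of the real work done by \cref{thm-struct-sync-comp} and \cref{prop-bg-exists}.

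For $(1) \Rightarrow (2)$, given a strongly connected $G$, I would pick any $\leq_S$-minimal element $H$ of the finite set of factors $\leq_S G$. Since factors of strongly connected graphs are strongly connected, $H$ is strongly connected, and by (1) it is bunchy, proving (2). For $(2) \Rightarrow (4)$, let $H$ be any bunchy synchronizing factor of $G$ with synchronizer $\Phi \in \hom_S(G,H)$. Since $H \leq_R G$ is bunchy, \cref{prop-bg-exists}(2) factorizes $\Phi = \Theta \circ \Delta$ with $\Delta \in \hom_R(G,B(G))$ and $\Theta \in \hom_R(B(G),H)$, and \cref{thm-struct-sync-comp}(4) then forces $\Delta$ to be synchronizing, giving $B(G) \leq_S G$.

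For $(4) \Rightarrow (3)$, let $G$ be strongly connected and non-bunchy. Then $B(G)$ is a proper factor of $G$, and (4) supplies $\Phi' \in \hom_S(G, B(G))$; since $B(G) \neq G$, some fiber of $\Phi'$ has at least two elements, so $\sim_{\Phi'}$ is nontrivial. Observing that $M(B(G)) = M(G)$, since any $\leq_R$-minimal graph below $B(G)$ is automatically $\leq_R$-minimal below $G$ by the uniqueness in \cref{thm-mg-sigma}, I would pick any $\Delta \in \hom_R(B(G), M(G))$ and set $\Phi = \Delta \circ \Phi' \in \hom_R(G, M(G))$. By \cref{thm-struct-sync-comp}(1), each $\sim_{\Phi'}$ class is contained in a $\sim_\Phi$ class, so $\sim_\Phi$ is nontrivial. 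Finally, for $(3) \Rightarrow (1)$, I would argue by contrapositive: if a strongly connected $G$ is non-bunchy, then (3) produces $\Phi \in \hom_R(G, M(G))$ with $\sim_\Phi$ nontrivial, and \cref{thm-struct-sync-comp}(2) identifies the quotient map $G \to G/\sim_\Phi$ as a synchronizer onto a proper factor, so $G$ is not $\leq_S$-minimal.

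I do not expect a serious obstacle, as the arguments reduce to bookkeeping around the structural decomposition in \cref{thm-struct-sync-comp} and the factoring property of $B(G)$ in \cref{prop-bg-exists}. The most delicate implication is $(2) \Rightarrow (4)$, which carries the main conceptual content of the proposition: the mere existence of \emph{some} bunchy synchronizing factor automatically forces the \emph{maximal} bunchy right-resolving factor $B(G)$ to be synchronizing as well. This is precisely why (4) is equivalent to the weaker-looking (2), and it is what makes the bunchy factor conjecture a clean statement about the canonical object $B(G)$ rather than an existential claim about some unspecified bunchy factor.
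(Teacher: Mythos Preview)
Your proposal is correct and essentially mirrors the paper's argument. The paper (via its more general \cref{prop-bfc-equiv}) proves the three-cycle $(1)\Rightarrow(2)\Rightarrow(3)\Rightarrow(1)$ and then separately $(2)\Leftrightarrow(4)$, whereas you arrange the same arguments into a single four-cycle $(1)\Rightarrow(2)\Rightarrow(4)\Rightarrow(3)\Rightarrow(1)$; the individual steps invoke exactly the same ingredients (\cref{thm-struct-sync-comp} and \cref{prop-bg-exists}(2)) in the same way.
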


\begin{conj}[bunchy factor conjecture]
The assertions in \cref{prop-bfc-equiv-new} are true.
\end{conj}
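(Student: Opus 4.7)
The plan is to attack formulation (3) of the conjecture, namely that every non-bunchy strongly connected graph $G$ admits a right-resolver $\Phi \in \hom_R(G, M(G))$ with $\sim_{\Phi}$ nontrivial. Granted (3), the other three formulations follow by \cref{prop-bfc-equiv-new}; one then upgrades this to the full conjecture by induction on $|V(G)|$, applying \cref{thm-struct-sync-comp}(4) to pass to the strictly smaller strongly connected quotient $G/\sim_{\Phi}$ and \cref{prop-bunchy-same-o} to identify the resulting bunchy synchronizing factor unambiguously with $B(G)$. The base case, $G$ bunchy, is covered directly by \cref{thm-alm-og-exists}, which gives $O(G) = G/\sim_G = G$, and by \cref{prop-bg-exists}(1), which gives $B(G) = G$.

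The strategy for proving (3) follows the template established in the two resolved cases, namely \cref{thm-bfc-cyc-bunch} for the cycle-of-bunches case and \cref{prop-bfc-alm} for the almost bunchy case. In both one exhibits $\Phi \in \hom_R(G,M(G))$, a state $I \in V(M(G))$, and a pair of words $u_1, u_2 \in L_{IJ}(M(G))$ sharing a common target $J$, such that the minimal images $U_i = (\partial\Phi)^{-1}(I) \cdot u_i$ satisfy $|U_1 \Delta U_2| = 2$, and then invokes \cref{lemma-min-diff-stab}. In the general setting one would begin by fixing any $\Phi \in \hom_R(G, M(G))$ and noting, via \cref{lemma-sc-min-img}, that every state of $G$ lies in some minimal image and that all minimal images have a common size $r$. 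The set of minimal images carries a natural right action of $L(M(G))$, giving a finite semigroupoid encoding the global transition structure. The non-bunchiness hypothesis supplies a state $I' \in V(G)$ with distinct outgoing edges $e_1, e_2$ having distinct targets in the same $\Sigma_G$-fiber; together with a minimal image $U \ni I'$, this is the raw data from which one hopes to construct the candidate pair $U_1, U_2$, after suitable modification of $\Phi$ within its equivalence class under $\aut(G)$ and $P(M(G))$.

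The essential obstacle is precisely this modification step, which generalises Trahtman's tallest-tree technique and is, as the paper explicitly flags, the barrier to resolving the $O(G)$ conjecture. Trahtman's argument, in the constant out-degree setting, iterates a single letter to produce a functional graph on $V(G)$ and chooses a colouring so that this graph has a unique maximum-height tree; the roots of the two shortest sibling branches merging in that tree then furnish the stable pair. For general $\leq_R$-minimal $M(G)$ — with cycles, parallel edges, and variable out-degrees — the functional graph under a single letter is replaced by the richer semigroupoid action on minimal images, and the permutational freedom in $\Phi$ is compartmentalised fibrewise under $\Sigma_G$, which obstructs direct transplantation of the argument. A successful resolution will likely require either a new numerical invariant, measuring a ``non-bunchiness defect'' that provably decreases under carefully chosen modifications of $\Phi$, or a reduction via the universal property in \cref{thm-univ-prop}, using the fiber product of $G$ with a suitably chosen bunchy auxiliary (for instance, $B(G)$ together with a test graph) to isolate the obstruction in a principal subcomponent where either an extension of Trahtman's method or an analogue of \cref{prop-bfc-alm} becomes applicable. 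Either route will almost certainly require a genuinely new combinatorial idea beyond the techniques currently available.
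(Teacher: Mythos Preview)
The statement under review is a \emph{conjecture}, and the paper does not prove it. There is no proof in the paper to compare against; the bunchy factor conjecture is presented as open, with \cref{prop-bunchy-implies-o} deriving the $O(G)$ conjecture from it and \cref{prop-bfc-equiv-new} giving equivalent formulations, but no argument establishing any of those formulations in general.

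Your proposal is not a proof either, and you are explicit about this: you outline a strategy, accurately identify the template used in the two resolved special cases (\cref{thm-bfc-cyc-bunch} and \cref{prop-bfc-alm}), and then correctly flag the essential obstacle --- the lack of a general substitute for Trahtman's tallest-tree construction when $M(G)$ is not a cycle of bunches --- as the point where genuinely new ideas are required. This matches the paper's own assessment (see the discussion preceding \cref{prop-bfc-equiv-new} and the remark in \S\ref{sec-intro} that ``the barrier to proving the $O(G)$ conjecture is our lack of a sufficiently general method of producing homomorphisms with nontrivial stability relation''). Your suggested avenues (a non-bunchiness defect invariant, or a reduction via the fiber product universal property) are reasonable speculations but are not pursued in the paper and remain speculative here as well.

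In short: there is no gap to name because there is no claimed proof. Your write-up is a faithful summary of the state of the problem as the paper leaves it, but it should be labelled as a research outline or discussion, not as a proof.
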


\begin{prop}\label{prop-bunchy-implies-o}
The bunchy factor conjecture implies the $O(G)$ conjecture.
\end{prop}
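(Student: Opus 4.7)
The plan is to show, assuming the bunchy factor conjecture, that for any strongly connected graph $G$ the set $\{ H \mid H \leq_S G \}$ has a unique $\leq_S$-minimal element. Existence is immediate: a right-resolver cannot increase the number of states, so this set is finite as a collection of isomorphism classes, and any finite poset has a minimal element.

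For uniqueness, I would take two $\leq_S$-minimal elements $K_1, K_2$ of $\{ H \mid H \leq_S G \}$. Each $K_i$, being a factor of the strongly connected graph $G$, is itself strongly connected. Moreover, by transitivity of $\leq_S$ (the corollary following \cref{thm-struct-sync-comp}), any $K'$ with $K' \leq_S K_i$ satisfies $K' \leq_S G$, so the $\leq_S$-minimality of $K_i$ within $\{ H \mid H \leq_S G \}$ upgrades to global $\leq_S$-minimality: $K' \leq_S K_i$ forces $K' = K_i$. Invoking formulation (1) of the bunchy factor conjecture, both $K_1$ and $K_2$ are therefore bunchy.

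The final step is to apply \cref{prop-bunchy-same-o} to the two bunchy synchronizing factors $K_1, K_2 \leq_S G$, obtaining $O(K_1) = O(K_2)$; the well-definedness of $O(K_i)$ is guaranteed by \cref{thm-alm-og-exists}, since bunchy implies almost bunchy. Because each $K_i$ is itself $\leq_S$-minimal, $O(K_i) = K_i$, so $K_1 = K_2$ as required. I expect no significant technical obstacle: the bunchy factor conjecture contributes exactly the step that reduces the uniqueness question for arbitrary strongly connected $G$ to the bunchy case, where \cref{thm-alm-og-exists} and \cref{prop-bunchy-same-o} finish the job. The one conceptual subtlety worth pinning down carefully is the upgrade from $\leq_S$-minimality within $\{ H \mid H \leq_S G \}$ to global $\leq_S$-minimality, which is what licenses invoking formulation (1) of the conjecture.
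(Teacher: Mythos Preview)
Your proposal is correct and follows essentially the same route as the paper: show that $\leq_S$-minimal elements below $G$ exist by finiteness, use formulation (1) of the bunchy factor conjecture to conclude they are bunchy, and then invoke \cref{prop-bunchy-same-o} to force uniqueness. The paper's proof is simply a terser version of yours; your explicit discussion of why local $\leq_S$-minimality below $G$ coincides with global $\leq_S$-minimality, and why $O(K_i)=K_i$, fills in details the paper leaves implicit.
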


\begin{proof}
Let $G$ be a strongly connected graph and let $A = \{ H \, | \, H \leq_S G, \, H \text{ is $\leq_S$-minimal} \}$. Clearly $|A| \geq 1$. By hypothesis, every element of $A$ is bunchy. By \cref{prop-bunchy-same-o}, $|A| \leq 1$, so $A$ has a single element, namely $O(G)$.
\end{proof}

Observe that the bunchy factor conjecture is a straightforward generalization of the road problem. As discussed above, the $O(G)$ conjecture was already known to imply the road colouring theorem, via the higher-edge result from \cite{agw-77-tms}. By contrast, the bunchy factor conjecture implies the road colouring theorem more directly, without reference to \cite{agw-77-tms}.

\section{Computing with right-resolvers}\label{sec-comput}

We now discuss the computational problems of constructing $O(G_1), O(G_2)$, for input graphs $G_1, G_2$ such that the $O(G_i)$ are known to exist, and deciding whether the $O(G_i)$ are isomorphic. Although one could apply generic graph isomorphism algorithms, which are efficient in practice (Theorem 1 in \cite{miller-79-jcss} gives a polynomial-time reduction from directed to undirected graph isomorphism, and see \cite{babai-18-icm} for a survey of the state of the art), it is desirable to have a polynomial-time algorithm, in particular one that only uses constructions involved in the theory of right-resolvers and synchronization. We do not attempt detailed complexity analyses, noting only that all of the procedures we give can be easily seen to run in polynomial time.

\subsection{Basic routines}\label{subsec-comput-basic}

In \cite{amt-95-etds}, a polynomial-time algorithm is given for computing $M(G)$, along with $\Sigma_G$ and a total ordering of $V(M(G))$ such that any graph isomorphism $M(G) \to M(H)$ must preserve the order of the states. Deciding whether $M(G)$, $M(H)$ are isomorphic is therefore no harder than constructing them. Moreover, we can use $\Sigma_G$ to construct right-resolvers, as follows.

\begin{alg}\label{alg-construct-rr}
Construct a right-resolver from a graph to its minimal right-resolving factor.

\begin{enumerate}
    \item Input: a graph $G$.
    
    \item Construct $M(G)$ and $\Sigma_G$.
    
    \item For each $I,J \in V(M(G))$:
    
    \begin{enumerate}[label=\arabic*.]
        \item Choose a total ordering on $E_{IJ}(M)$.
        
        \item For each $I' \in \Sigma_G^{-1}(I)$:
        
        \begin{enumerate}[label=\arabic*.]
            \item Choose a total ordering on $\bigcup_{J' \in \Sigma_G^{-1}(J)} E_{I'J'}(G)$, i.e. the edges $e \in E_{I'}(G)$ with $\Sigma_G(t(e)) = J$.
            
            \item For each $e \in E_{I'}(G)$ with $\Sigma_G(t(e)) = J$, record as $\Phi(e)$ the edge in $E_{IJ}(M)$ with the same position in the total ordering of $E_{IJ}(M)$ as $e$ has in $\bigcup_{J' \in \Sigma_G^{-1}(J)} E_{I'J'}(G)$.
        \end{enumerate}
    \end{enumerate}
    
    \item Return: $\Phi$.
\end{enumerate}

\end{alg}

There are also obvious polynomial-time procedures for constructing the fiber product of two right-resolvers, and for determining whether there is a path from one given state to another (in a graph that may not be strongly connected). With these basic routines, we can construct the stability relation of a right-resolver in polynomial time, as follows.

\begin{alg}\label{alg-construct-stab}
Construct the stability relation of a right-resolver.

\begin{enumerate}
    \item Input: graphs $G, H$, and $\Phi \in \hom_R(G,H)$.
    
    \item Construct $P = G \times_{\Phi, \Phi} G$.
    
    \item Populate the set $U$ of states $(I_1, I_2) \in V(P)$ with no outgoing path to the diagonal in $V(P)$.
    
    \item Populate and output: the set $\sim_{\Phi}$ of states $(I_1, I_2) \in V(P)$ with no outgoing path to $U$.
\end{enumerate}
\end{alg}

Recall that, by definition, $\Phi \in \hom_S(G,H)$ if and only if $H = G/\sim_{\Phi}$, i.e. the $\partial \Phi$-fibers are precisely the $\sim_{\Phi}$ classes. Since this is easy to check, Algorithm \ref{alg-construct-stab} can be used to decide whether $\Phi$ is synchronizing.

A similar procedure can be used to construct the maximum bunchy factor $B(G)$, following the construction in \cref{prop-construct-bg}:

\begin{alg}\label{alg-construct-bg}
Construct the maximum bunchy right-resolving factor of a graph.

\begin{enumerate}
    \item Input: a graph $G$.
    
    \item Construct $M(G)$, along with the quotient map $\Sigma_G: V(G) \to V(M)$.
    
    \item Construct a graph $H$ with the following data:
    \begin{align*}
        V(H) &= V(G) \times V(G) \\
        |E_{(I_1, I_2)(J_1, J_2)}(H)| &=
        \begin{cases}
          1, & \Sigma_G(I_1) = \Sigma_G(I_2), \Sigma_G(J_1) = \Sigma_G(J_2), \text{ and } J_i \in F(I_i) \\
          0, & \text{otherwise}
        \end{cases}
    \end{align*}

    \item Populate the set $\approx_0$ of pairs $(I_1, I_2)$ with a path in $H$ from the diagonal to $(I_1, I_2)$.
    
    \item Construct and output: the transitive closure $\approx$ of the relation $\approx_0$.
\end{enumerate} 

\end{alg}

Regarding step 4: referring to the description of $\approx_0$ in \cref{prop-construct-bg}, a path in $H$ from the diagonal to $(I_1, I_2)$ corresponds to a pair of paths $\gamma, \delta \in L(G)$ witnessing $I_1 \approx_0 I_2$.


\subsection{Decision procedures for common synchronizing factors and extensions}\label{subsec-comput-decide}

See \S \ref{subsec-fiber-fact-ext} for the proof of the following Proposition, which collects several similar statements relating common synchronizing factors and common synchronizing extensions. These results allow us to use fiber products of two graphs to decide questions about common factors of the graphs.

\begin{prop}\label{cor-air-fact-ext-mg}
Let $G_1,G_2$ be strongly connected graphs with $M(G_1) = M(G_2) = M$.

\begin{enumerate}
    \item If $G_1, G_2$ have a common synchronizing factor, then there exist $\Phi_i \in \hom_R(G_i, M)$ and a principal component $C$ of $Q = G_1 \times_{\Phi_1, \Phi_2} G_2$ such that $\Hat{\Phi}_i|_C \in \hom_S(C,G_i)$. 
    
    \item Assume the $O(G)$ conjecture. Then the converse holds in (1). That is, suppose that there exist $\Phi_i \in \hom_R(G_i, M)$ and a principal component $C$ of $Q = G_1 \times_{\Phi_1, \Phi_2} G_2$, such that $\Hat{\Phi}_i|_C \in \hom_S(C,G_i)$. Then $G_1, G_2$ have a common synchronizing factor, specifically $O(G_1) = O(C) = O(G_2)$.
    
    \item If the $G_i$ are bunchy, then the equivalence described in (1) and (2) holds unconditionally (i.e. without assuming any unproven conjectures).
    
    \item Assume the bunchy factor conjecture. Then we have $O(G_1) = O(G_2)$ if and only if, for the essentially unique $\Phi_i \in \hom_R(B(G_i), M)$, there is a principal component $C$ of $P = B(G_1) \times_{\Phi_1, \Phi_2} B(G_2)$ such that $\Hat{\Phi}_i|_C \in \hom_S(C, B(G_i))$.
\end{enumerate}

\end{prop}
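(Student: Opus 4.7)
The plan is to prove part~(1) by a direct construction, then derive (2)--(4) as structural consequences via the theory of $O$ and $B(\cdot)$ already developed. For~(1), assume $H$ is a common synchronizing factor of $G_1, G_2$, witnessed by $\Psi_i \in \hom_S(G_i, H)$. Since $M \leq_R M(H) \leq_R H \leq_R G_i$, the minimality in \cref{thm-mg-sigma} forces $M(H) = M$. Choose any $\Theta \in \hom_R(H, M)$ and set $\Phi_i = \Theta \circ \Psi_i \in \hom_R(G_i, M)$. We will compare $Q = G_1 \times_{\Phi_1, \Phi_2} G_2$ with the fiber product $\tilde{Q} = G_1 \times_{\Psi_1, \Psi_2} G_2$ taken over $H$.

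The central observation is that $\tilde{Q}$ sits inside $Q$ as a \emph{principal} subgraph. Clearly $V(\tilde{Q}) \subseteq V(Q)$. If $(I_1, I_2) \in V(\tilde{Q})$ (so $\Psi_1(I_1) = \Psi_2(I_2)$) and $(e_1, e_2)$ is any edge of $Q$ out of $(I_1, I_2)$, then $\Theta(\Psi_1(e_1)) = \Theta(\Psi_2(e_2))$; but $\Psi_1(e_1), \Psi_2(e_2) \in E_{\Psi_1(I_1)}(H)$, and the right-resolving property of $\Theta$ forces $\Psi_1(e_1) = \Psi_2(e_2)$, so $(e_1, e_2) \in E(\tilde{Q})$. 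Hence any principal component $\tilde{C}$ of $\tilde{Q}$ (which exists since $\tilde{Q}$ is sink-free) is also a principal subgraph of $Q$, and being strongly connected is therefore a principal component of $Q$. Take $C = \tilde{C}$. The restrictions $\hat{\Phi}_i|_C$ and $\hat{\Psi}_i|_C$ coincide, and they are right-resolvers onto $G_i$ (their image is a strongly connected principal subgraph of the strongly connected $G_i$, hence all of $G_i$). Synchronization follows from that of the $\Psi_i$: given $(I_1, I_2), (I_1, I'_2) \in V(C)$, the states $I_2, I'_2$ share a common $\Psi_2$-image, so any word in $H$ synchronizing them in $G_2$ lifts uniquely through both $\Psi_i$ to a path in $C$ carrying $(I_1, I_2)$ and $(I_1, I'_2)$ to a common successor.

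With~(1) in hand,~(2) follows immediately: the existence of $C$ with $\hat{\Phi}_i|_C$ synchronizing gives $G_i \leq_S C$ for both $i$, so \cref{lemma-poset} (applied with the $O(G)$ conjecture ensuring that all the $O$'s are defined) yields $O(G_1) = O(C) = O(G_2)$. Part~(3) repeats the argument unconditionally in the bunchy case: $Q$ is bunchy by \cref{lemma-fiber-bunchy}, and $M(C) = M$ follows from the right-resolving composition $C \to G_i \to M$, so $C$ is bunchy by \cref{lemma-bunchy-factor-princ}; bunchy graphs admit $O$ unconditionally by \cref{cor-alm-og-bunchy}. For~(4), the bunchy factor conjecture gives $B(G_i) \leq_S G_i$, so $O(G_i) = O(B(G_i))$ by \cref{lemma-poset}, reducing the question to $O(B(G_1)) = O(B(G_2))$; the essential uniqueness of $\Phi_i \in \hom_R(B(G_i), M)$ from \cref{prop-alm-bunchy-equiv} makes the fiber product $P$ well-defined up to isomorphism, and part~(3) applied to $B(G_1), B(G_2)$ concludes. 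The main conceptual obstacle is the principal-subgraph observation in part~(1); once it is in place, the rest of the proof amounts to careful bookkeeping with $O$ and $B(\cdot)$.
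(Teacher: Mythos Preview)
Your proof is correct and follows essentially the same route as the paper: the paper packages your principal-subgraph observation as \cref{lemma-princ-fiber-embed} and your synchronization argument as \cref{prop-air-fact-ext}, then derives (2)--(4) via the framework of \cref{prop-og-equiv} rather than invoking \cref{lemma-poset} directly, but the substance is identical. One small slip: for the existence of $O$ in the bunchy case you should cite \cref{thm-alm-og-exists} rather than \cref{cor-alm-og-bunchy} (the latter presupposes $O(G)$ exists and asserts it is bunchy).
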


Ashley-Marcus-Tuncel give a polynomial-time algorithm for deciding whether two strongly connected graphs have a common strongly connected synchronizing extension. Their algorithm relies on the construction of a graph that they call $\Tilde{G}$, from an input graph $G$; see Theorem 5.2 and Remark 5.10 in \cite{amt-95-etds}. If the $O(G)$ conjecture is true, then by \cref{cor-air-fact-ext-mg}(2), the Ashley-Marcus-Tuncel algorithm also decides whether $O(G_1) = O(G_2)$ for $G_1, G_2$ strongly connected. Without assuming any unproven conjectures, a negative result from the algorithm shows, by \cref{cor-air-fact-ext-mg}(1), that $G_1, G_2$ have no common synchronizing factor, while a positive result is inconclusive. 

For bunchy $G_1, G_2$, however, \cref{thm-alm-og-exists} and \cref{cor-air-fact-ext-mg}(3) show that it can be decided in polynomial time whether $O(G_1), O(G_2)$ are isomorphic without the Ashley-Marcus-Tuncel algorithm:

\begin{alg}\label{alg-decide-bunchy-og}
Decide whether $O(G_1), O(G_2)$ are isomorphic for $G_1, G_2$ strongly connected and bunchy.

\begin{enumerate}
    \item Input: strongly connected bunchy graphs $G_1, G_2$ such that $M(G_1) = M(G_2) = M$.
    
    \item Choose $\Phi_i \in \hom_R(G_i,M)$, using Algorithm \ref{alg-construct-rr}.
    
    \item Construct $Q = G_1 \times_{\Phi_1, \Phi_2} G_2$.
    
    \item For each principal component $C$, decide whether $\Hat{\Phi}_i|_C \in \hom_S(C, G_i)$, using Algorithm \ref{alg-construct-stab}. If so, halt, and return the result that $O(G_1) = O(G_2)$. 
    
    \item If no affirmative result is returned in step 4, then halt and return the result that $O(G_1) \neq O(G_2)$.
\end{enumerate}
\end{alg}

Regarding steps 4--5: \cref{cor-air-fact-ext-mg}(3) shows that we have $O(G_1) = O(G_2)$ if and only if $\Hat{\Phi}_i|_C \in \hom_S(C,G_i)$ for some principal component $C$ of $Q$. The same algorithm would work even if $G_1, G_2$ are not strongly connected but only bunchy, with ``principal component $C$'' replaced by ``principal subgraph $C$ such that each $\partial \Hat{\Phi}_i|_{V(C)}$ is surjective'', but it is not clear that the number of such subgraphs is bounded by a polynomial in $|V(G)|$.

Furthermore, if the bunchy factor conjecture is true, then isomorphism of $O(G_1), O(G_2)$ is equivalent, by \cref{prop-bfc-equiv-new}, to isomorphism of $O(B(G_1)), O(B(G_2))$, which can be decided without the use of the Ashley-Marcus-Tuncel $\Tilde{G}$ algorithm, using the following procedure. Note that this works even if we do not have an efficient way to find an element of $\hom_S(G_i, B(G_i))$. 

\begin{alg}\label{alg-bfc-iso-og}
Decide isomorphism of $O(G_1), O(G_2)$ (assuming the bunchy factor conjecture).

\begin{enumerate}
    \item Input: strongly connected graphs $G_1, G_2$.
    
    \item Construct $B(G_1)$ and $B(G_2)$, using Algorithm \ref{alg-construct-bg}.
    
    \item Decide whether $O(B(G_1)) = O(B(G_2))$, using Algorithm \ref{alg-decide-bunchy-og}.
    
    \item Output: the Boolean value of ``$O(B(G_1)) = O(B(G_2))$''.
\end{enumerate}

\end{alg}

Observe that the procedure of Algorithm \ref{alg-bfc-iso-og} is essentially what is described in \cref{cor-air-fact-ext-mg}(4).


\section{Proofs of structural results and additional details}\label{sec-proofs}

\subsection{Remarks on the proof of \cref{thm-mg-sigma}}\label{subsec-right-air-mg}

In this subsection, we revisit the first proof of \cref{thm-mg-sigma} given in \cite{amt-95-etds}. The proof of the uniqueness of $M(G)$, for a given graph $G$, is without issue, but the proof of the uniqueness of $\Sigma_G$ is not quite complete. That proof seems to assume that, for graphs $G, H$ with $H \leq_R G$, two right-resolvers $G \to H$ with distinct state maps must partition $V(G)$ differently. In general, this is false: per \cref{lemma-same-fiber-aut} below, counterexamples arise precisely when $\aut(H)$ acts nontrivially on $V(H)$.

The second proof given in \cite{amt-95-etds}, constructing $V(M(G))$ by successive refinements of an initial state partition, can be made complete by observing that all of the state maps corresponding to the successive refinements are invariant under $\aut(G)$.

\begin{prop}[Lemma 3.1 in \cite{amt-95-etds}]\label{prop-amt-right-meet}
Let $G, H_1, H_2$ be graphs with $H_i \leq_R G$ via $\Psi_i \in \hom_R(G, H_i)$, with partitions $\alpha_i = V(H_i)$ of $V(G)$. Then there is a graph $K \leq_R H_1, H_2$, with $V(K)$ equal to the finest common coarsening of $\alpha_1, \alpha_2$.
\end{prop}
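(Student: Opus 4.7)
The plan is to produce $K$ as a right-resolving quotient of $H_1$ (and separately of $H_2$), where the quotient relation on $V(H_i)$ identifies states whose $\Psi_i$-preimages lie in the same $\beta$-block, with $\beta$ the finest common coarsening of $\alpha_1, \alpha_2$ on $V(G)$. The two resulting quotient graphs will then be shown to coincide via \cref{lemma-fiber-determ-img}. Concretely, I would define an equivalence $\equiv_i$ on $V(H_i)$ by $J_1 \equiv_i J_2$ iff $(\partial \Psi_i)^{-1}(J_1)$ and $(\partial \Psi_i)^{-1}(J_2)$ lie in a common $\beta$-block, and aim to realize the set-theoretic quotient $V(H_i)/\equiv_i$ as the vertex set of a right-resolving factor $\Phi_i \in \hom_R(H_i, K_i)$.

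The core step is to show that, within each $\equiv_i$-block $B \se V(H_i)$, all states have the same out-degree and admit compatible pairings of their outgoing edges. Given $J_1, J_2 \in B$, pick $I_k \in (\partial \Psi_i)^{-1}(J_k)$; then $I_1, I_2$ lie in a common $\beta$-block and, by the definition of $\beta$ as the transitive closure of ``same $\alpha_1$-block or same $\alpha_2$-block,'' are joined by a chain $I_1 = K_0, K_1, \ldots, K_n = I_2$ whose consecutive pairs share an $\alpha_{i_j}$-block. On each step, the right-resolver $\Psi_{i_j}$ induces a canonical bijection $E_{K_j}(G) \to E_{K_{j+1}}(G)$ via equality of $\Psi_{i_j}$-images; these bijections pair edges whose targets share an $\alpha_{i_j}$-block, and the property of sharing a $\beta$-block is preserved under composition. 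Translating back through the right-resolving $\Psi_i$ yields a bijection $E_{J_1}(H_i) \to E_{J_2}(H_i)$ with the desired property that paired edges have targets in a common $\equiv_i$-block.

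Fixing a representative $J_B \in B$ and a bijection $\phi_J : E_J(H_i) \to E_{J_B}(H_i)$ of the above type for each $J \in B$, I would take $V(K_i) = V(H_i)/\equiv_i$, $E_B(K_i) = E_{J_B}(H_i)$, $t_{K_i}(e) = [t_{H_i}(e)]_{\equiv_i}$, and $\Phi_i|_{E_J(H_i)} = \phi_J$; the target-preserving property of $\phi_J$ makes $\Phi_i$ a graph homomorphism, and bijectivity makes it right-resolving. Finally, $\Phi_i \circ \Psi_i : G \to K_i$ is a right-resolver whose fiber partition on $V(G)$ is exactly $\beta$, so $K_1 \cong K_2$ by \cref{lemma-fiber-determ-img}; this common graph $K$ satisfies $K \leq_R H_i$ with $V(K)$ in bijection with the finest common coarsening of $\alpha_1,\alpha_2$, as required.

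The main obstacle is that the tempting global construction, namely taking the transitive closure on $E(G)$ of the relation $e \sim^{(i)} e'$ iff $\Psi_i(e) = \Psi_i(e')$ and quotienting, can identify two distinct outgoing edges from a single state, breaking the right-resolving property. For example, if $G$ has two states $A,B$ each carrying two parallel loops and $\Psi_1, \Psi_2$ both collapse $A$ and $B$ but pair up the loops in opposite ways, the transitive closure merges all four loops, while the correct $K$ is still a single state with two loops. The remedy, adopted above, is to avoid any global edge-equivalence and instead make per-state bijective choices whose compatibility across $B$ is automatic once each is chosen in a $\equiv_i$-target-preserving way.
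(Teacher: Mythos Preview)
The paper does not actually supply its own proof of this proposition: it is quoted verbatim as Lemma~3.1 from \cite{amt-95-etds} and used without argument. So there is no in-paper proof to compare against.

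That said, your argument is correct and self-contained. The chain argument is the right idea: since $\beta$ is generated by the union of the two fiber relations, any two $G$-states in a common $\beta$-block are linked by a finite zigzag along $\alpha_1$- and $\alpha_2$-blocks, and each step supplies, via the relevant $\Psi_{i_j}$, a bijection of outgoing edge sets that respects $\beta$ on targets. Pushing this down through $\Psi_i$ gives the $\equiv_i$-compatible bijections you need to define the quotient $K_i$ and the right-resolver $\Phi_i: H_i \to K_i$. Your use of \cref{lemma-fiber-determ-img} at the end is exactly right: both composites $\Phi_i \circ \Psi_i$ have fiber partition $\beta$, so $K_1$ and $K_2$ are isomorphic and serve as the desired $K$. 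Your final paragraph correctly identifies the trap in the naive global edge-quotient and explains why per-state choices avoid it; this is a genuine subtlety and worth keeping in any write-up.

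One small point of presentation: when you translate the bijection $E_{I_1}(G) \to E_{I_2}(G)$ back to $E_{J_1}(H_i) \to E_{J_2}(H_i)$, you should make explicit that the result is independent of the choice of $I_k \in (\partial \Psi_i)^{-1}(J_k)$ only up to composition with further target-$\equiv_i$-preserving bijections, which is all you need; alternatively, simply fix the $I_k$ once and for all, as your construction already implicitly does by choosing representatives $J_B$.
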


\begin{cor}[Theorem 3.2 in \cite{amt-95-etds}]\label{cor-amt-mg-exists}
Let $G$ be a graph. Then there exists a unique $\leq_R$-minimal graph $M(G) \leq_R G$, with $V(M(G))$ given by the finest common coarsening of the partitions $\alpha = V(H)$ of $V(G)$, where $H \leq_R G$.
\end{cor}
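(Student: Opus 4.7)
The plan is to apply Proposition \ref{prop-amt-right-meet} inductively across all right-resolving factors of $G$ to construct the minimal factor $M(G)$, and then to deduce uniqueness from antisymmetry of $\leq_R$ on isomorphism classes of graphs.

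First, I would note that since $V(G)$ is finite, there are only finitely many partitions of $V(G)$, hence only finitely many partitions $\alpha$ that arise as $V(H)$ for some $H \leq_R G$. For each such partition, pick one witnessing right-resolving factor, and enumerate these as $H_1, H_2, \dots, H_n$. Set $K_1 = H_1$. Inductively, having constructed $K_i$ with $K_i \leq_R H_j$ for every $j \leq i$ (and hence $K_i \leq_R G$ by transitivity of $\leq_R$), apply Proposition \ref{prop-amt-right-meet} to the pair $(K_i, H_{i+1})$, both viewed as right-resolving factors of $G$, to obtain $K_{i+1} \leq_R K_i$ and $K_{i+1} \leq_R H_{i+1}$, whose state partition is the finest common coarsening (the join in the partition lattice) of the state partitions of $K_i$ and $H_{i+1}$. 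Set $M = K_n$.

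Next, I would verify that $V(M)$ is the finest common coarsening of all partitions $\alpha = V(H)$, $H \leq_R G$. Since the join operation on partitions is associative, the state partition of $M = K_n$ is the join of all $V(H_1), \dots, V(H_n)$, which by the exhaustiveness of the enumeration equals the join of all partitions arising from right-resolving factors of $G$. This shows $V(M)$ is a common coarsening of every such $\alpha$; and because $M$ is itself a right-resolving factor of $G$, its partition is in fact one of the $\alpha$'s, so it equals the join. The construction then guarantees $M \leq_R H$ for every $H \leq_R G$, i.e. $M$ is a $\leq_R$-minimum (not merely minimal) element of $\{ H \, | \, H \leq_R G \}$.

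Finally, uniqueness follows from antisymmetry. If $M'$ is any $\leq_R$-minimal graph with $M' \leq_R G$, then $M \leq_R M'$ by the preceding paragraph, and $\leq_R$-minimality of $M'$ forces $M' \leq_R M$ as well (since $M$ itself is a witness); antisymmetry then gives $M' = M$. The main conceptual obstacle is the passage from ``minimal'' to ``minimum'' --- i.e., to exclude the possibility of incomparable minimal elements --- but this is exactly what Proposition \ref{prop-amt-right-meet} rules out by always producing a common lower bound in $\leq_R$. A minor technical point is to be careful that the enumeration $H_1, \dots, H_n$ covers every possible state partition, so that the join taken in the iterative construction is truly the finest common coarsening over all right-resolving factors of $G$, and not just over the chosen witnesses; but since the assertion about $V(M(G))$ only depends on state partitions, this is immediate.
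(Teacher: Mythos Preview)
Your proof is correct and follows exactly the approach the paper implies: the corollary is stated without proof in the paper, as an immediate consequence of Proposition~\ref{prop-amt-right-meet}, and your argument simply spells out the iteration over the finitely many state partitions that this entails. The only cosmetic point is that your uniqueness paragraph is slightly roundabout---once you have shown $M$ is a $\leq_R$-minimum, any $\leq_R$-minimal $M' \leq_R G$ satisfies $M \leq_R M'$, and minimality of $M'$ then forces $M = M'$ directly, without needing to invoke antisymmetry separately.
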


We will show further that for any $\Phi_1, \Phi_2 \in \hom_R(G,M(G))$, we have $\partial \Phi_1 = \partial \Phi_2$. Note that $\partial \Phi_1, \partial \Phi_2$ at least have the same sets of fibers, since, if they did not, then by \cref{prop-amt-right-meet}, we could take their finest common coarsening, contradicting the minimality of $M(G)$. It is therefore enough to show that $\aut(M(G))$ acts trivially on $M(G)$. For this, we need a lemma.

\begin{lemma}\label{lemma-same-fiber-aut}
Let $G,H$ be graphs with $H \leq_R G$, and let $\Phi_1, \Phi_2 \in \hom_R(G,H)$. Suppose that the $\partial \Phi_i$ have the same fibers, i.e. for any $I_1, I_2 \in V(G)$, we have $\partial \Phi_1(I_1) = \partial \Phi_1(I_2)$ if and only if $\partial \Phi_2(I_1) = \partial \Phi_2(I_2)$. Then there exists an automorphism $\tau \in \aut(H)$ such that $\partial \Phi_2 = \partial(\tau \circ \Phi_1)$. 
\end{lemma}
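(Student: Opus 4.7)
The plan is to build $\tau$ in two stages: define its state map directly from the fiber hypothesis, then use the right-resolving property to check that parallel-edge counts between pairs of fibers match, and finally pick any bijections on edges to extend $\tau$ to a full graph automorphism. The required identity on state maps will be automatic from the definition.

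First I would define $\partial \tau : V(H) \to V(H)$ by setting $\partial \tau(\partial \Phi_1(I)) = \partial \Phi_2(I)$ for each $I \in V(G)$. Since the $\Phi_i$ are surjective right-resolvers, the state maps $\partial \Phi_i$ are surjective, so this prescription specifies $\partial \tau$ on all of $V(H)$. The hypothesis that $\partial \Phi_1(I_1) = \partial \Phi_1(I_2)$ implies $\partial \Phi_2(I_1) = \partial \Phi_2(I_2)$ makes $\partial \tau$ well-defined, and the reverse implication gives injectivity; surjectivity follows because every $J \in V(H)$ equals $\partial \Phi_2(I)$ for some $I$, whence $\partial \tau(\partial \Phi_1(I)) = J$.

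Next I would extend $\tau$ to edges. Fix $J_1, J_2 \in V(H)$ and pick any $I \in (\partial \Phi_1)^{-1}(J_1) = (\partial \Phi_2)^{-1}(\partial \tau(J_1))$. For $k \in \{1,2\}$, the right-resolving property gives a bijection $\Phi_k|_{E_I(G)} : E_I(G) \to E_{\partial \Phi_k(I)}(H)$. Restricting both to the subset $\{ e \in E_I(G) \, | \, \partial \Phi_1(t(e)) = J_2 \}$, which by the fiber hypothesis equals $\{ e \in E_I(G) \, | \, \partial \Phi_2(t(e)) = \partial \tau(J_2) \}$, yields bijections onto $E_{J_1 J_2}(H)$ and onto $E_{\partial \tau(J_1)\, \partial \tau(J_2)}(H)$ respectively. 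Therefore $|E_{J_1 J_2}(H)| = |E_{\partial \tau(J_1)\, \partial \tau(J_2)}(H)|$, and I can choose any bijection $\tau|_{E_{J_1 J_2}(H)} : E_{J_1 J_2}(H) \to E_{\partial \tau(J_1)\, \partial \tau(J_2)}(H)$. Assembling these choices over all pairs $(J_1, J_2)$ gives an edge map that by construction satisfies $s \circ \tau = \partial \tau \circ s$ and $t \circ \tau = \partial \tau \circ t$, so $\tau \in \aut(H)$.

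Finally, for any $I \in V(G)$, $\partial(\tau \circ \Phi_1)(I) = \partial \tau(\partial \Phi_1(I)) = \partial \Phi_2(I)$ by the definition of $\partial \tau$, which is exactly the required equality. I expect the main point to keep track of is the cardinality matching of $E_{J_1 J_2}(H)$ across $\partial \tau$; this is where the hypothesis is really used, but it reduces cleanly to comparing the two right-resolving bijections out of a single vertex in the common fiber, and the freedom to choose the edge bijections independently on each pair $(J_1, J_2)$ avoids any coherence issue.
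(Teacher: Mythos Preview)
Your proof is correct and follows essentially the same approach as the paper: define the state bijection $T=\partial\tau$ from the equal-fibers hypothesis, use the right-resolving bijections out of a single vertex in the common fiber to match $|E_{J_1J_2}(H)|$ with $|E_{T(J_1)T(J_2)}(H)|$, and then choose arbitrary edge bijections. Your write-up is simply more explicit about why the cardinality match holds, whereas the paper leaves that step to the reader.
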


\begin{proof}
By the assumption of equal fibers, there exists a (unique) bijection $T: V(H) \to V(H)$ with $\partial  \Phi_2 = T \circ \partial \Phi_1$. We need to find $\tau \in \aut(H)$ with $\partial \tau = T$. Let $I \in V(H)$. For $J \in F(I)$, note, by the right-resolving hypothesis on the $\Phi_i$ and the choice of $T$, that $|E_{IJ}(H)| = |E_{T(I)T(J)}(H)|$. Letting $\tau|_{E_{IJ}(H)}: E_{IJ}(H) \to E_{T(I)T(J)}(H)$ be any bijection, we have $\tau \in \aut(H)$ with $T = \partial \tau$ and $\partial \Phi_2 = \partial(\tau \circ \Phi_1)$.
\end{proof}

We now discuss the quotient of a graph by its automorphism group. Let $G$ be a graph. We will construct a graph $K = G/\aut(G) \leq_R G$ as follows. Let $V(K)$ consist of the orbits in $V(G)$ under $\aut(G)$. Let $I,J \in V(K)$ and $I_1, I_2 \in I$. We need to specify $|E_{IJ}(K)|$. We claim that $\sum_{J' \in J} |E_{I_1 J'}(G)| = \sum_{J' \in J} | E_{I_2 J'}(G)|$. Indeed, let $\tau \in \aut(G)$ such that $I_2 = \partial \tau(I_1)$. Then $\partial \tau(F(I_1)) = \partial \tau (F(I_2))$, and $\tau(J) = J$, so for any $J' \in J \cap F(I_1)$, we have $\partial \tau(J') \in J \cap F(I_2)$. Therefore $\sum_{J' \in J} |E_{I_1 J'}(G)|   \leq_R \sum_{J' \in J} | E_{I_2 J'}(G)|$. Replacing $\tau$ with $\tau^{-1}$, we obtain equality. Define $K$ by specifying that $|E_{IJ}(K)| = \sum_{J' \in J} |E_{I_i J'}(G)|$. This edge count ensures that $K \leq_R G$. 

In particular, if $\aut(G)$ acts nontrivially on $V(G)$, then $K \neq G$, so $G$ is not $\leq_R$-minimal. It follows that, for a $\leq_R$-minimal graph $M$, $\aut(M)$ acts trivially on $V(M)$. This observation, together with \cref{lemma-same-fiber-aut}, shows that for any $\Phi_1, \Phi_2 \in \hom_R(G,M(G))$, we have $\partial \Phi_1 = \partial \Phi_2$.


\subsection{Proof of \cref{thm-struct-sync-comp}}\label{subsec-stab-proofs}

We prove \cref{thm-struct-sync-comp} in several stages. First, in Lemma \ref{lemma-stab-comp}, we determine how stability classes intersect with fibers in compositions of right-resolvers.

\begin{lemma}\label{lemma-stab-comp}
Let $G, K, H$ be graphs with $H \leq_R K \leq_R G$. Let $\Psi \in \hom_R(G,K)$, $\Delta \in \hom_R(K,H)$, and let $\Phi = \Delta \circ \Psi$. Then $\sim_{\Psi}$-classes are intersections of $\sim_{\Phi}$-classes with $\partial \Psi$-fibers. That is, for $I_1, I_2 \in V(G)$, if $I_1 \sim_{\Psi} I_2$, then $I_1 \sim_{\Phi} I_2$; conversely, if $I_1 \sim_{\Phi} I_2$ and moreover $\partial \Psi(I_1) = \partial \Psi(I_2)$, then $I_1 \sim_{\Psi} I_2$.
\end{lemma}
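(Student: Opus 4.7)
The plan is to exploit the uniqueness of right-resolving lifts to set up a clean correspondence between transitions computed via $\Psi$ (on $K$) and transitions computed via $\Phi$ (on $H$), and then to translate the stability condition back and forth across this correspondence.

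\textbf{Key step: a transition correspondence.} First I would establish the following transfer principle. Fix $I \in V(G)$ and let $u \in L_{\partial \Psi(I)}(K)$ with pushforward $u' = \Delta(u) \in L_{\partial \Phi(I)}(H)$. I claim $I \cdot u$ (computed via $\Psi$) and $I \cdot u'$ (computed via $\Phi$) coincide. Indeed, let $\gamma \in L_I(G)$ be the unique path with $\Phi(\gamma) = u'$; then $\Psi(\gamma) \in L_{\partial \Psi(I)}(K)$ satisfies $\Delta(\Psi(\gamma)) = \Phi(\gamma) = u' = \Delta(u)$, so by the right-resolving property of $\Delta$ (unique lift from $\partial \Psi(I)$) we get $\Psi(\gamma) = u$, whence $I \cdot u = t(\gamma) = I \cdot u'$. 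Conversely, any $u \in L_{\partial \Psi(I)}(K)$ arises as $u = \Psi(\gamma)$ for some $\gamma$, and then $u' = \Phi(\gamma) = \Delta(u)$ pairs it with exactly one $u'$. This transfer step is the main content of the proof; once it is in hand, both directions are bookkeeping.

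\textbf{Forward direction.} Assume $I_1 \sim_\Psi I_2$, so $\partial \Psi(I_1) = \partial \Psi(I_2) =: J$ and in particular $\partial \Phi(I_1) = \partial \Phi(I_2)$. Given $u' \in L_{\partial \Phi(I_1)}(H)$, lift it uniquely via $\Delta$ to $u \in L_J(K)$. Apply $\sim_\Psi$ to produce $v \in L_{t(u)}(K)$ with $I_1 \cdot uv = I_2 \cdot uv$ (via $\Psi$), and set $v' = \Delta(v) \in L_{t(u')}(H)$. Since $\Delta(uv) = u'v'$, the transfer principle gives $I_i \cdot u'v' = I_i \cdot uv$, so $v'$ witnesses $I_1 \sim_\Phi I_2$.

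\textbf{Converse direction.} Assume $I_1 \sim_\Phi I_2$ with $\partial \Psi(I_1) = \partial \Psi(I_2) = J$. Given $u \in L_J(K)$, push to $u' = \Delta(u) \in L_{\partial \Phi(I_1)}(H)$ and use $\sim_\Phi$ to obtain $v' \in L_{t(u')}(H)$ with $I_1 \cdot u'v' = I_2 \cdot u'v'$. Since $\partial \Delta(t(u)) = t(u')$, the right-resolving property of $\Delta$ gives a unique lift $v \in L_{t(u)}(K)$ with $\Delta(v) = v'$. Then $\Delta(uv) = u'v'$, and the transfer principle yields $I_1 \cdot uv = I_2 \cdot uv$ (via $\Psi$), proving $I_1 \sim_\Psi I_2$.

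The hard part is conceptual rather than technical: one has to recognize that because $\Delta$ is itself right-resolving, words in $L(H)$ lift uniquely to words in $L(K)$ from any chosen source state, so stability conditions over $H$ and over $K$ are interchangeable on any given $\partial \Psi$-fiber. Once this is internalized, both implications become a matter of translating witnesses through $\Delta$ in the appropriate direction.
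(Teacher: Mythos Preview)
Your proof is correct and follows essentially the same route as the paper's: lift words in $H$ uniquely to $K$ via the right-resolving property of $\Delta$, push words in $K$ down to $H$ via $\Delta$, and translate stability witnesses across. The only difference is presentational: you isolate the ``transfer principle'' ($I \cdot u = I \cdot \Delta(u)$) as an explicit preliminary step, whereas the paper leaves this implicit and simply writes the equalities $I_i \cdot \lambda\mu = I_i \cdot uv$ without comment.
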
 

\begin{proof}
First suppose that $I_1 \sim_{\Psi} I_2$. Then in particular $\partial \Psi(I_1) = \partial \Psi (I_2) = \hat{I}$. Let $I = \partial \Delta(\hat{I})$ and $u \in L_I(H)$, and consider the unique $\lambda \in L_{\Hat{I}}(K)$ such that $\Delta(\lambda) = u$. Since $I_1 \sim_{\Psi} I_2$, there exists $\mu \in L_{t(\lambda)}(K)$ such that $I_1 \cdot \lambda \mu = I_2 \cdot \lambda \mu$. Letting $v = \Delta(\mu)$, we have $I_1 \cdot uv = I_2 \cdot uv$, so $I_1 \sim_{\Phi}  I_2$.

For the converse, suppose that $I_1 \sim_{\Phi} I_2$ and $\partial \Psi(I_1) = \partial \Psi(I_2) = \Hat{I}$. Let $I = \Delta(\Hat{I})$, let $\lambda \in L_{\Hat{I}}(K)$, let $u = \Delta(\lambda)$, and let $v \in L_{t(u)}(H)$ be such that $I_1 \cdot uv = I_2 \cdot uv$. Consider the unique $\mu \in L_{t(\lambda)}(K)$ such that $\Delta(\mu) = v$. Then $I_1 \cdot \lambda \mu = I_2 \cdot \lambda \mu$, so indeed $I_1 \sim_{\Psi} I_2$.
\end{proof}

\cref{cor-comp-air-fiber} follows immediately from \cref{lemma-stab-comp}, and together they comprise \cref{thm-struct-sync-comp}(1).

\begin{cor}\label{cor-comp-air-fiber}
If $\Phi = \Delta \circ \Psi$ is a composition of right-resolvers, then $\Psi$ is synchronizing if and only if every $\partial \Psi$-fiber is contained in a $\sim_{\Phi}$-class.
\end{cor}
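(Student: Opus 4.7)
The plan is to deduce the corollary as an essentially formal consequence of Lemma \ref{lemma-stab-comp}, which fully characterizes the $\sim_{\Psi}$-classes as the intersections of $\sim_{\Phi}$-classes with $\partial \Psi$-fibers. The key preliminary observation, used implicitly throughout, is that the stability relation of any right-resolver refines the partition into fibers over the target: indeed, $I_1 \sim_{\Psi} I_2$ already forces $\partial \Psi(I_1) = \partial \Psi(I_2)$ since the defining condition involves words in $L_{\partial \Psi(I_1)}(K)$ and requires $I_1, I_2$ to lie in a common fiber. Consequently, $\Psi$ is synchronizing precisely when the $\sim_{\Psi}$-classes and $\partial \Psi$-fibers coincide, rather than merely when the former refine the latter.

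For the forward direction, I would assume $\Psi$ is synchronizing and fix a $\partial \Psi$-fiber $F$. By hypothesis $F$ is a single $\sim_{\Psi}$-class, so for any $I_1, I_2 \in F$ the forward half of Lemma \ref{lemma-stab-comp} yields $I_1 \sim_{\Phi} I_2$. Hence $F$ is contained in a single $\sim_{\Phi}$-class, as required.

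For the converse, assume every $\partial \Psi$-fiber sits inside some $\sim_{\Phi}$-class, and fix such a fiber $F$ and two states $I_1, I_2 \in F$. Then $I_1 \sim_{\Phi} I_2$ by hypothesis and $\partial \Psi(I_1) = \partial \Psi(I_2)$ by construction, so the reverse half of Lemma \ref{lemma-stab-comp} applies and gives $I_1 \sim_{\Psi} I_2$. Thus every $\partial \Psi$-fiber is entirely contained in a $\sim_{\Psi}$-class; combined with the preliminary observation that $\sim_{\Psi}$-classes lie within $\partial \Psi$-fibers, this forces equality of the two partitions, so $\Psi$ is synchronizing.

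Since each direction is a one-line invocation of the corresponding half of Lemma \ref{lemma-stab-comp}, there is no serious obstacle; the only mildly subtle point is the bookkeeping observation that $\sim_{\Psi}$ refines the $\partial \Psi$-fiber partition, so that ``contained in a $\sim_{\Psi}$-class'' upgrades to ``equal to a $\sim_{\Psi}$-class'' once the fibers are already $\sim_{\Psi}$-saturated.
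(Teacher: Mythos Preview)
Your proof is correct and follows exactly the paper's approach: the paper simply states that the corollary ``follows immediately from \cref{lemma-stab-comp}'', and you have spelled out precisely this immediate deduction. The only addition is your explicit remark that $\sim_{\Psi}$ refines the $\partial\Psi$-fiber partition, which the paper leaves implicit in the definition of the stability relation.
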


\begin{proof}[Proof of \cref{thm-struct-sync-comp}(2)]
By assumption, the $\sim_{\Phi}$ classes are precisely the $\partial \Psi$-fibers, so by \cref{lemma-stab-comp}, they are also the $\sim_{\Psi}$ classes. Thus indeed $\Psi \in \hom_S(G,K)$. 

Moreover, let $I \in V(H)$, let $I'_1, I'_2 \in (\partial \Delta)^{-1}(I)$, and suppose that $I'_1 \sim_{\Delta} I'_2$. We claim that $I'_1 = I'_2$. Toward this end, we claim that $(\partial \Psi)^{-1}(\{ I'_1, I'_2 \})$ is a subset of a $\sim_{\Phi}$ class. Indeed, let $u \in L_I(H)$, and let $I_i \in (\partial \Psi)^{-1}(I'_i)$. Let $v \in L_{t(u)}(H)$ be such that $I'_1 \cdot uv = I'_2 \cdot uv = J'$. Let $J_i = I_i \cdot uv$. Then $J_1, J_2 \in (\partial \Psi)^{-1}(J')$. Since $\Psi$ is synchronizing, there exists $\gamma \in L_{J'}(K)$ with $J_1 \cdot \gamma = J_2 \cdot \gamma$. Let $w = \Delta(\gamma)$. Then $I_1 \cdot uvw = I_2 \cdot uvw$. Since $u \in L_I(H)$ was arbitrary, it follows that $I_1 \sim_{\Phi} I_2$ as claimed. Since, by assumption, a $\sim_{\Phi}$ class is precisely a $\partial \Psi$-fiber of a single state of $K$, we must have $I'_1 = I'_2$ as claimed. Therefore $\sim_{\Delta}$ is indeed trivial.
\end{proof}

\begin{lemma}\label{lemma-pullback-stab}
Let $G,K,H$ be graphs with $H \leq_R K \leq_R G$. Let $\Psi \in \hom_R(G,K)$, $\Delta \in \hom_R(K,H)$, and let $\Phi = \Delta \circ \Psi$. Let $I \in V(H)$, let $I'_1, I'_1 \in (\partial \Delta)^{-1}(I)$, and let $I_i \in (\partial \Psi)^{-1}(I'_i)$. If $I_1 \sim_{\Phi} I_2$, then $I'_1 \sim_{\Delta} I'_2$.
\end{lemma}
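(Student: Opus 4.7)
The plan is to observe that transitions under $\Phi$ project down to transitions under $\Delta$ via $\partial \Psi$, so stability of $I_1, I_2$ under $\Phi$ pushes forward immediately to stability of $I'_1 = \partial \Psi(I_1), I'_2 = \partial \Psi(I_2)$ under $\Delta$. In other words, the argument is a direct application of functoriality of lifting along right-resolvers under composition.

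First, I would record the following projection identity. For any state $I_0 \in V(G)$ and any word $u \in L_{\partial \Phi(I_0)}(H)$, one has
\[
\partial \Psi(I_0 \cdot_{\Phi} u) = (\partial \Psi(I_0)) \cdot_{\Delta} u,
\]
where the subscripts indicate via which right-resolver the transition is taken. To see this, let $\gamma \in L_{I_0}(G)$ be the unique path with $\Phi(\gamma) = u$, and set $\lambda = \Psi(\gamma) \in L_{\partial \Psi(I_0)}(K)$. Since $\Phi = \Delta \circ \Psi$, one has $\Delta(\lambda) = u$, so by uniqueness of the lift along $\Delta$, $\lambda$ is precisely the path used to define $(\partial \Psi(I_0)) \cdot_{\Delta} u$. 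Applying $\partial \Psi$ to $t(\gamma) = I_0 \cdot_{\Phi} u$ yields $t(\lambda) = (\partial \Psi(I_0)) \cdot_{\Delta} u$, giving the identity.

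With this in hand, the proof is immediate: given an arbitrary $u \in L_I(H)$, use $I_1 \sim_\Phi I_2$ to obtain $v \in L_{t(u)}(H)$ such that $I_1 \cdot_{\Phi} uv = I_2 \cdot_{\Phi} uv$. Applying $\partial \Psi$ to both sides and using the projection identity gives $I'_1 \cdot_{\Delta} uv = I'_2 \cdot_{\Delta} uv$. Since $u$ was arbitrary, this is exactly $I'_1 \sim_\Delta I'_2$.

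There is no real obstacle here; the only care required is bookkeeping about which homomorphism is used to define the transition in each expression. This lemma is thus essentially a transport-of-structure statement complementing the forward direction of \cref{lemma-stab-comp}, whose proof uses the opposite direction of the same lifting correspondence (lifting a word from $H$ to $K$, rather than pushing a lifted path from $G$ down to $K$).
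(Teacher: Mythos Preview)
Your proof is correct and follows essentially the same route as the paper's own argument: pick an arbitrary $u \in L_I(H)$, use $I_1 \sim_{\Phi} I_2$ to find $v$ collapsing the pair, and push down via $\partial\Psi$. Your explicit verification of the projection identity $\partial\Psi(I_0 \cdot_{\Phi} u) = (\partial\Psi(I_0)) \cdot_{\Delta} u$ is in fact more careful than the paper, which simply asserts the analogous relation inline (and with $\partial\Phi$ written where $\partial\Psi$ is meant).
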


\begin{proof}
Suppose that $I_1 \sim_{\Phi} I_2$. Let $u \in L_I(H)$. Let $v \in L_{t(u)}(H)$ be such that $I_1 \cdot uv = I_2 \cdot uv$. Then, since $I'_i \cdot uv = \partial \Phi(I_i \cdot uv)$, we have $I'_1 \cdot uv = I'_2 \cdot uv$. Therefore $I'_1 \sim_{\Delta} I'_2$.
\end{proof}

\begin{proof}[Proof of \cref{thm-struct-sync-comp}(3)]
Suppose that $\sim_{\Delta}$ is trivial. Let $I \in V(H)$, let $I'_1, I'_2 \in (\partial \Delta)^{-1}(I)$ with $I'_1 \neq I'_2$, and let $I_i \in (\partial \Psi)^{-1}(I'_i)$. Since $\sim_{\Delta}$ is trivial, there exists $u \in L_I(H)$ such that, for every $v \in L_{t(u)}(H)$, we have $I'_1 \cdot uv \neq I'_2 \cdot uv$. This implies that $I_1 \cdot uv \neq I_2 \cdot uv$, so $I_1 \not\sim_{\Phi} I_2$. It follows that each $\sim_{\Phi}$ class is contained inside a $\partial \Psi$ fiber. By \cref{lemma-stab-comp}, this shows that $\sim_{\Phi} = \sim_{\Psi}$. 
\end{proof}

The final part of \cref{thm-struct-sync-comp} can be proved in the strongly connected case using symbolic dynamics, via the multiplicativity of degree under composition of right-resolvers; see \cite{lm-95-intro}, \S 9.1. Using the theory developed so far, we give a self-contained proof without the assumption of strong connectedness.

\begin{proof}[Proof of \cref{thm-struct-sync-comp}(4)]
Suppose that $\Psi, \Delta$ are synchronizing. Let $I \in V(H)$. Let $I_1, I_2 \in (\partial \Phi)^{-1}(I)$. We need to show that $I_1 \sim_{\Phi} I_2$. Let $u \in L_I(K)$. We need to find $v \in L_{t(u)}(H)$ such that $I_1 \cdot uv = I_2 \cdot uv$. Let $I'_i = \partial \Psi(I_i)$. Then $\partial \Delta(I'_1) = \partial \Delta(I'_2)$, so $I'_1 \sim_{\Delta} I'_2$ since $\Delta$ is synchronizing. Let $v_1 \in L_{t(u)}(H)$ be such that $I'_1 \cdot uv_1 = I'_2 \cdot uv_1$. Note that $I'_i \cdot uv_1 = \partial \Psi(I_i \cdot uv_1)$. Let $J' = I'_i \cdot uv_1$ and $J_i = I_i \cdot uv_1$. Then $\partial \Psi(J_1) = J' = \partial \Psi(J_2)$, so $J_1 \sim_{\Psi} J_2$ since $\Psi$ is synchronizing. Let $\gamma \in L_{J'}(K)$ be such that $J_1 \cdot \gamma = J_2 \cdot \gamma$. Let $v_2 = \Delta(\gamma)$. Then $I_i \cdot uv_1 v_2 = J_i \cdot \gamma$, so taking $v = v_1 v_2$, we have $I_1 \cdot uv = I_2 \cdot uv$. Thus $I_1 \sim_{\Phi} I_2$, so $\Phi$ is indeed synchronizing.

We prove the converse by the contrapositive. Suppose that $\Delta$ is not synchronizing. Then there exist $I \in V(H)$ and $I'_1, I'_2 \in (\partial \Delta)^{-1}(I)$ such that $I'_1 \not\sim_{\Delta} I'_2$. By \cref{lemma-pullback-stab}, there exist $I_i \in (\partial \Psi)^{-1}(I'_i)$ such that $I_1 \not\sim_{\Phi} I_2$. Thus $\Phi$ is not synchronizing. Similarly, suppose that $\Psi$ is not synchronizing. Then there exist $I' \in V(K)$ and $I_1, I_2 \in (\partial \Psi)^{-1}(I')$ such that $I_1 \not\sim_{\Psi} I_2$. By \cref{lemma-stab-comp}, $I_1 \not\sim_{\Phi} I_2$, so $\Phi$ is not synchronizing.
\end{proof}


\subsection{Proofs of Propositions \ref{prop-alm-bunchy-equiv} and \ref{cor-alm-stab-unique}}\label{sec-bunchy-details}

\begin{proof}[Proof of \cref{prop-alm-bunchy-equiv}]
Let $M = M(G)$. First, suppose that $G$ is almost bunchy. Let $\Phi_1, \Phi_2 \in \hom_R(G,M)$. For each $I,J \in V(M)$, and each $I' \in \Sigma_G^{-1}(I)$, let $A_{I',J} = \{ e \in E_{I'}(G) \, | \, \Sigma_G(t(e)) = J \}$. If there exists $I' \in \Sigma_G^{-1}(I)$ with $|F(I') \cap \Sigma_G^{-1}(J)| \geq 2$ (by almost bunchiness, there is at most one such $I'$ for any given $I,J$), then let $\tau_{IJ} \in P(M)$ be the permutation of parallel edges in $M$ given by
\begin{align*}
    \tau_{IJ}|_{E(M) \sm E_{IJ}(M)} &= \id|_{E(M) \sm E_{IJ}(M)} \\
    \tau_{IJ}|_{E_{IJ}(M)} &= \Phi_1|_{A_{I',J}} \circ (\Phi_2|_{A_{I',J}})^{-1}
\end{align*}
If there is no $I' \in \Sigma_G^{-1}(I)$ with $|F(I') \cap \Sigma_G^{-1}(J)| \geq 2$, then let $\tau_{IJ}=\id$. Distinct $\tau_{IJ}$ are permutations of disjoint sets and therefore commute. Let $\tau = \prod_{I,J \in V(M)} \tau_{IJ}$. Note that if $G$ is bunchy, then $\tau_{IJ} = \id$ for all $I,J$, so $\tau = \id$.

Now, for each $I,J \in V(M)$ and each $I' \in \Sigma_G^{-1}(I)$, let $\sigma_{IJ,I'} \in P(G)$ be given by
\begin{align*}
    \sigma_{IJ,I'}|_{E(G) \sm A_{I',J}} &= \id|_{E(G) \sm A_{I',J}} \\
    \sigma_{IJ,I'}|_{A_{I',J}} &=  \Phi_2|_{A_{I',J}}^{-1} \circ \tau^{-1} \circ \Phi_1|_{A_{I',J}} 
\end{align*}
All of the $\sigma_{IJ,I'}$ commute. Let $\sigma = \prod_{I,J \in V(M), \, I' \in \Sigma_G^{-1}(I)} \sigma_{IJ,I'}$. Then for $I \in V(M)$, $J \in F(I)$, and $I' \in \Sigma_G^{-1}(I)$,
\[
\tau \circ \Phi_2 \circ \sigma|_{A_{I',J}} = \tau \circ \Phi_2 \circ (\Phi_2|_{A_{I',J}}^{-1} \circ \tau^{-1} \circ \Phi_1|_{A_{I',J}} ) = \Phi_1|_{A_{I',J}}
\]
This concludes the proof in the ``only if'' direction.

For the ``if'' direction, which we prove in the contrapositive, suppose that $G$ is not almost bunchy. Let $I,J \in V(M)$ and $I_1, I_2 \in \Sigma_G^{-1}(I)$ such that $|F(I_i) \cap \Sigma_G^{-1}(J)| \geq 2$ for $i=1,2$. Let $e_{i,1}, e_{i,2} \in A_{I_i,J}$ be such that $t(e_{i,1}) \neq t(e_{i,2})$. Let $a_1, a_2 \in E_{IJ}(M)$, and let $\Phi_1, \Phi_2 \in \hom_R(G,M)$ be such that $\Phi_1(e_{1,j}) = a_j$ but $\Phi_2(e_{2,1}) = a_2$ and $\Phi_2(e_{2,2}) = a_1$. (The behaviour of $\Phi_i$ on $E(G) \sm \{ e_{i,j} \}_{i,j=1,2}$ is irrelevant.) Then there do not exist $\sigma \in P(G)$, $\tau \in P(M)$ such that $\Phi_1 = \tau \circ \Phi_2 \circ \sigma$.

Finally, suppose that $G$ is almost bunchy, but not bunchy. Let $I,J \in V(M)$ and $I' \in \Sigma_G^{-1}(I)$ such that $|F(I') \cap \Sigma_G^{-1}(J)| \geq 2$. Let $e_1, e_2 \in A_{I', J}$ be such that $t(e_1) \neq t(e_2)$. Let $a_1, a_2 \in E_{IJ}(M)$, and let $\Phi_1, \Phi_2 \in \hom_R(G,M)$ be such that $\Phi_1(e_i) = a_i$, $\Phi_2(e_1) = a_2$, and $\Phi_2(e_2) = a_1$. Then there does not exist $\sigma \in P(G)$ such that $\Phi_1 = \Phi_2 \circ \sigma$.
\end{proof}

\begin{proof}[Proof of \cref{cor-alm-stab-unique}]
Let $M = M(G)$. It is enough to show that $S_{\Phi_1}, S_{\Phi_2}$ share a generating set $T$, which we now construct and examine. For $I,J \in V(M)$, if $J \notin F(I)$, let $T_{I,J,1} = T_{I,J,2} = \emptyset$. If $J \in F(I)$, then  let $T_{I,J,i}$ be the set of maps $f_{a,i}: \Sigma_G^{-1}(I) \to \Sigma_G^{-1}(J)$ of the form $I' \mapsto t \circ  (\Phi_i|_{E_{I'}})^{-1}(a)$, i.e. $I' \cdot f_{a,i} = I' \cdot a$ with respect to $\Phi_i$, where $a \in E_{IJ}(M)$. Clearly $T_{I,J,i}$ generates $S_{\Phi_i}$, in the sense that $S_{\Phi_i}$ is the smallest collection of maps closed under composition and containing the $T_{I,J,i}$ as $I,J$ range over $V(M)$.

We claim that $T_{I,J,1} = T_{I,J,2}$. Indeed, let $a \in E_{IJ}(M)$ and let $I' \in \Sigma_G^{-1}(I)$ such that $|F(I') \cap \Sigma_G^{-1}(J)| \geq 2$. It is enough to show that $f_{a,1} \in T_{I,J,2}$, as this will show that $T_{I,J,1} \se T_{I,J,2}$, from which equality follows by symmetry. By almost bunchiness, there is at most one state $I' \in \Sigma_G^{-1}(I)$ with $|F(I') \cap \Sigma_G^{-1}(J)| \geq 2$. If there is no such state, then clearly $f_{a,1} = f_{a,2}$, so assume that such a state $I'$ exists. Observe that $I' \cdot f_{b, 2} = I' \cdot f_{a,1}$ where $b = \Phi_2 \circ (\Phi_1|_{E_{I'(G)}})^{-1} (a)$. Moreover, for every $I'' \in \Sigma_G^{-1}(I)$ with $I'' \neq I'$, we also have, by almost bunchiness, that $I'' \cdot f_{b,2} = I'' \cdot f_{a,1}$. Therefore $f_{a,1} = f_{b,2} \in T_{I,J,2}$, so indeed $T_{I,J,1} = T_{I,J,2}$ as claimed.  Let $T_{I,J} = T_{I,J,i}$ and let $T = \bigcup_{I,J} T_{I,J}$. Then $T$ generates both $S_{\Phi_1}, S_{\Phi_2}$, so indeed $S_{\Phi_1} = S_{\Phi_2}$.
\end{proof}

\subsection{Proof of \cref{thm-univ-prop}}\label{subsec-univ-prop-proof}

\begin{proof}[Proof of \cref{thm-univ-prop}]
Define $T: V(G) \to V(P)$ as follows: for $I' \in V(G)$, let $T(I') = (\partial \Phi_1(I'), \partial \Phi_2(I'))$. Note that $\partial \Psi_P \circ T = \partial(\Psi_i \circ \Phi_i) = \Sigma_G$ for each $i$. Let $C$ be the subgraph of $P$ induced by $T(V(G))$. Let $I' \in V(G)$ and $J' \in F(I')$; $I = \Sigma_G(I')$ and $J = \Sigma_G(J')$; and $I_i = \partial \Phi_i(I')$ and $J_i = \partial \Phi_i(J')$. Observe that $(I_1, I_2) = T(I')$ and $(J_1, J_2) = T(J')$. 

As in the proof of \cref{prop-alm-bunchy-equiv}, let $A_{I',J} = \{ e \in E_{I'}(G) \, | \, \Sigma_G(t(e)) = J  \}$. Since $H_i$ is bunchy, we have $F(I_i) \cap \Sigma_{H_i}^{-1}(J) = \{ J_i \}$, so $\Phi_i(A_{I', J}) = E_{I_i J_i}(H_i)$. It therefore makes sense to define
\[
\Delta_i|_{A_{I',J}} = (\Hat{\Psi}_i|_{E_{(I_1,I_2)(J_1, J_2)}(P)})^{-1} \circ \Phi_i|_{A_{I', J}}: A_{I',J} \to E_{(I_1, I_2)(J_1, J_2)}(P)
\]
Gluing these together, we obtain maps $\Delta_i: E(G) \to E(P)$. For $e \in A_{I',J}$, we have $s(\Delta_i(e)) = T(s(e))$ and $t(\Delta_i(e)) = T(t(e))$, so the $\Delta_i: G \to P$ are graph homomorphisms with $\partial \Delta_i = T$.

We now claim that $\Delta_i(E(G)) = E(C)$. Enumerate $E_{IJ}(M) = \{  a^{(1)}, \dots, a^{(k)} \}$, and let $e_i^{(j)} = (\Psi_i|_{E_{I_i J_i}(H_i)})^{-1}(a^{(j)})$. Then
\[
(e_1^{(j)}, e_2^{(j)}) \in E_{(I_1, I_2)(J_1, J_2)}(P) = E_{(I_1, I_2)(J_1, J_2)}(C)
\]
where equality holds because $(I_1, I_2), (J_1, J_2) \in V(C)$ and $C$ is an induced subgraph of $P$. Fixing $(I_1,I_2)$ and varying $J'$, thus varying $(J_1, J_2)$, the sets $E_{(I_1, I_2)(J_1, J_2)}(P)$ exhaust $E_{(I_1, I_2)}(P)$, so $E_{(I_1, I_2)}(C) = E_{(I_1, I_2)}(P)$. Since $\Delta_i|_{E_{I'}(G)}: E_{I'}(G) \to E_{(I_1, I_2)}(P)$ is surjective, we have $E_{(I_1, I_2)}(C) = \Delta_i(E_{I'}(G))$. Thus indeed $E(C) = \Delta_i(E(G))$.

Since we already know that $E_{(I_1, I_2)}(C) = E_{(I_1, I_2)}(P)$ for each $(I_1, I_2) \in V(C)$, it follows that $C$ is indeed a principal subgraph of $P$. Finally, we have
\[
\Hat{\Psi}_i \circ \Delta_i|_{A_{I',J}} = \Hat{\Psi}_i \circ  (\Hat{\Psi}_i|_{E_{(I_1, I_2)(J_1, J_2)}(P)})^{-1} \circ \Phi_i|_{A_{I', J}} = \Phi_i|_{A_{I', J}}
\]
as claimed. This shows that $\Hat{\Psi}_i|_C: C \to H_i$ is surjective, so $H_i \leq_R C$.
\end{proof}


\subsection{Proofs of \cref{lemma-fiber-bunchy} and \cref{prop-bg-exists}, and construction of $B(G)$}\label{subsec-fiber-max-bunchy}

\begin{proof}[Proof of \cref{lemma-fiber-bunchy}]
Let $(I_1, I_2) \in V(P)$ with $I_i \in V(H_i)$. Let $e = (e_1, e_2), e' = (e_1', e_2') \in E_{(I_1, I_2)}(P)$ such that $t( \partial \Psi_P (e)) = t( \partial \Psi_P (e'))$. We must show that $t(e) = t(e')$. Toward that goal, note that for each $i =1,2$, we have
\begin{align*}
    t(\partial \Psi_P(e)) &= \partial \Psi_P(t(e)) = \partial \Psi_i(t(e_i)) \\
    t(\partial \Psi_P(e')) &= \partial \Psi_P(t(e')) = \partial \Psi_i(t(e'_i))
\end{align*}
Therefore $\partial \Psi_i(t(e_i)) = \partial \Psi_i(t(e'_i))$ for each $i$, since $t( \partial \Psi_P (e)) = t( \partial \Psi_P (e'))$ by hypothesis. But since the $H_i$ are bunchy, we in fact have $t(e_i) = t(e_i')$ for each $i$, so indeed $t(e) = t(e')$ as required. This shows that $\partial \Psi_P|_{F((I_1, I_2))}$ is a bijection, so $P$ is bunchy.

Let $C$ be a principal subgraph of $P$ such that $\partial \Hat{\Psi}_i|_{V(C)} : V(C) \to V(H_i)$ are surjective. Then $M(C) = M$. For each $J \in V(M)$, we have $\Sigma_C^{-1}(J) = \Sigma_P^{-1}(J) \cap V(C)$. Thus $C$ is bunchy.
\end{proof}

\begin{proof}[Proof of \cref{prop-bg-exists}]
To prove (1), let $H_1, H_2 \leq_R G$ be $\leq_R$-maximal among the bunchy right-resolving factors of $G$. Let $\Phi_i \in \hom_R(G,H_i)$ and $\Psi_i \in \hom_R(H_i, M)$. Let $P = H_1 \times_{\Psi_1, \Psi_2} H_2$. By \cref{thm-univ-prop}, there exist a principal subgraph $C$ of $P$, and $\Delta_i \in \hom_R(G,C)$, such that $\Phi_i = \Hat{\Psi}_i|_C \circ \Delta_i$, so $H_i \leq_R C$. By \cref{lemma-fiber-bunchy}, $C$ is bunchy, so by the maximality of the $H_i$, we have $H_1 = C = H_2$. This proves uniqueness, so we can take $B = H_i = C$.

To prove (2), let $H \leq_R G$ be bunchy and let $\Phi \in \hom_R(G,H)$. Let $\Phi' \in \hom(G,B)$, $\Psi \in \hom_R(H,M)$, $\Psi' \in \hom(B,M)$. Let $P = H \times_{\Phi, \Phi'} B$. By the universal property (\cref{thm-univ-prop}), there exist a principal subgraph $C$ of $P$ and $\Delta, \Delta' \in \hom_R(G,C)$ with $\Phi = \Hat{\Psi} \circ \Delta$ and $\Phi' = \Hat{\Psi}' \circ \Delta'$. Again by \cref{lemma-fiber-bunchy}, $C$ is bunchy, so by (1) and the fact that $B \leq_R C$, we have $B = C$. This proves (2) with $\Theta = \Hat{\Psi}'|_C$.
\end{proof}

We now present the construction of $B(G)$ described in Algorithm \ref{alg-construct-bg}.

\begin{prop}\label{prop-construct-bg}
Let $G$ be a graph. For $J_1, J_2 \in V(G)$ with $\Sigma_G(J_1) = \Sigma_G(J_2)$, write $J_1 \approx_0 J_2$ if there exist paths $\gamma = \gamma_1 \cdots \gamma_n, \delta = \delta_1 \cdots \delta_n \in L(G)$ where $\gamma_i, \delta_i \in E(G)$, such that $s(\gamma_1) = s(\delta_1)$, $t(\gamma_n) = J_1$, $t(\delta_n) = J_2$, and $\Sigma_G(t(\gamma_i)) = \Sigma_G(t(\delta_i))$ for each $i$. Let $\approx$ denote the transitive closure of $\approx_0$ and let $\Phi \in \hom_R(G,M(G))$. Then $\approx$ is a congruence with respect to $\Phi$, and $B(G) = G/\approx$.
\end{prop}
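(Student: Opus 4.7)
The plan is to verify three claims about the relation $\approx$ on $V(G)$: (i) $\approx$ is a congruence with respect to $\Phi$, so that the quotient $G/\approx$ is a well-defined right-resolving factor of $G$; (ii) $G/\approx$ is bunchy; and (iii) every bunchy right-resolving factor of $G$ factors, as a right-resolver, through $G/\approx$. Claims (ii) and (iii) together identify $G/\approx$ as the $\leq_R$-maximal bunchy right-resolving factor of $G$, which by \cref{prop-bg-exists}(1) is exactly $B(G)$.

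For (i), observe that $\approx_0$ is symmetric, so (after adjoining the diagonal if needed for isolated states) $\approx$ can be treated as an equivalence relation. To verify the congruence property, suppose $J_1 \approx_0 J_2$ via witnessing paths $\gamma, \delta$, and let $u \in L_{\Sigma_G(J_1)}(M(G))$. Let $\gamma' \in L_{J_1}(G)$ and $\delta' \in L_{J_2}(G)$ be the unique $\Phi$-lifts of $u$. The concatenations $\gamma\gamma'$ and $\delta\delta'$ share a common source, and on the $\gamma', \delta'$ portion, each pair of corresponding edges is mapped by $\Phi$ to the same edge of $M(G)$, so their targets agree under $\Sigma_G = \partial\Phi$. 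This witnesses $J_1 \cdot u \approx_0 J_2 \cdot u$. Chaining across $\approx_0$-paths recovers the congruence property for $\approx$.

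For (ii), since $M(G) \leq_R G/\approx \leq_R G$, the $\leq_R$-minimality of $M(G)$ forces $M(G/\approx) = M(G)$, so bunchiness of $G/\approx$ can be checked with respect to $\Sigma_G$. The content of bunchiness is that for every $I \in V(G)$ and every $J_1, J_2 \in F(I)$ with $\Sigma_G(J_1) = \Sigma_G(J_2)$, one has $J_1 \approx J_2$. This is immediate by applying $\approx_0$ to single-edge paths $e_i \in E_{I J_i}(G)$, both of which have source $I$.

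For (iii), let $H \leq_R G$ be bunchy via $\Phi' \in \hom_R(G, H)$, and fix $\Psi \in \hom_R(H, M(G))$, so that $\partial(\Psi \circ \Phi') = \Sigma_G$. It suffices to prove the state-level implication $J_1 \approx_0 J_2 \Rightarrow \partial \Phi'(J_1) = \partial \Phi'(J_2)$, since then $\Phi'$ descends through the quotient to a right-resolver $G/\approx \to H$, the descent being well-defined on edges because of the right-resolving property of $\Psi$. Given witnessing paths $\gamma, \delta$ of length $n$, I would induct on $i$ to show $\partial \Phi'(t(\gamma_i)) = \partial \Phi'(t(\delta_i))$. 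The base case reduces to $\partial \Phi'(s(\gamma_1)) = \partial \Phi'(s(\delta_1))$, which holds by hypothesis. For the inductive step, $\partial \Phi'(t(\gamma_{i+1}))$ and $\partial \Phi'(t(\delta_{i+1}))$ both lie in the follower set of a common state in $H$ (by the inductive hypothesis), and they agree under $\partial \Psi$ because $\Sigma_G(t(\gamma_{i+1})) = \Sigma_G(t(\delta_{i+1}))$; bunchiness of $H$ then forces equality.

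The main subtlety lies in (iii): the induction needs the common-source hypothesis from the definition of $\approx_0$ to ensure that both candidate followers in $H$ are followers of the same state before bunchiness can be invoked. This is precisely why the definition of $\approx_0$ insists on $s(\gamma_1) = s(\delta_1)$ rather than a weaker linkage between $\gamma$ and $\delta$.
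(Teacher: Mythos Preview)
Your proposal is correct and follows essentially the same approach as the paper: verify that $\approx$ is a congruence, that the quotient is bunchy, and that any bunchy right-resolving factor coarsens $\approx_0$ (hence $\approx$). The only organizational difference is that the paper first proves a single follower lemma---if $I_1 \approx I_2$ then any $J_k \in F(I_k) \cap \Sigma_G^{-1}(J)$ satisfy $J_1 \approx J_2$---which yields both congruence and bunchiness at once, whereas you handle these two points separately; your induction in (iii) spells out what the paper merely asserts.
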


\begin{proof}
Let $M = M(G)$. Let $I \in V(M)$, $I_1, I_2 \in \Sigma_G^{-1}(I)$, and $J \in F(I)$. Suppose that $I_1 \approx I_2$. Let $J_i \in F(I_i) \cap \Sigma_G^{-1}(J)$. We need to show that $J_1 \approx J_2$. Let $I_1 = I^{(0)}, I^{(1)}, \dots, I^{(n)} = I_2 \in \Sigma_G^{-1}(I)$ with $I^{(j)} \approx_0 I^{(j+1)}$. Let $\gamma^{(j)}, \delta^{(j)} \in L(G)$ witness the relation $I^{(j)} \approx_0 I^{(j+1)}$. Choose $J^{(j)} \in F(I^{(j)}) \cap \Sigma_G^{-1}(J)$, with $J^{(0)} = J_1$ and $J^{(n)} = J_2$. Let $e^{(j)} \in E_{I^{(j)} J^{(j)}}(G)$. Then $\gamma^{(j)} e^{(j)}, \delta^{(j)} e^{(j+1)}$ witness $J^{(j)} \approx_0 J^{(j+1)}$. This shows that $J_1 \approx J_2$.

Let $\Phi \in \hom_R(G,M(G))$. As in the previous paragraph, let $I \in V(M)$, let $J \in F(I)$, and let $I_1, I_2 \in \Sigma_G^{-1}(I)$. Suppose that $I_1 \approx I_2$. Let $a \in E_{IJ}(M)$, let $J_i = I_i \cdot a$, and let $e_i = (\Phi|_{E_{I_i}(G)} )^{-1}(a)$. Then $t(e_i) = J_i$, so $J_1 \approx J_2$. Therefore $\approx$ is indeed a congruence for $\Phi$. It follows that $F([I_i]_{\approx}) \cap \Sigma_{G/\approx}^{-1}(J) = \{ [J_i]_{\approx} \}$, so indeed $G/\approx$ is bunchy. 

To see that $G/\approx$ is $\leq_R$-maximal among the bunchy factors of $G$, let $H \leq_R G$ be bunchy and $\Psi \in \hom_R(G,H)$. If $I_1 \approx_0 I_2$ and $H$ is bunchy, then we must have $\partial \Psi(I_1) = \partial \Psi(I_2)$. Therefore the partition into $\partial \Psi$-fibers corresponds to an equivalence relation that coarsens the symmetric, reflexive relation $\approx_0$, and thus also coarsens the transitive closure $\approx$. Considering $V(H)$ as a partition of $V(G)$, we must therefore have $V(H) \preceq V(G)/\approx$, so $(G/\approx) \, = B(G)$ by the maximality of $B(G)$.
\end{proof}

\subsection{Proofs of Propositions \ref{prop-bfc-equiv-new} and \ref{cor-air-fact-ext-mg}}\label{subsec-fiber-fact-ext}

\begin{lemma}\label{lemma-princ-fiber-embed}
Let $G_1, G_2, H, K$ be graphs with $K \leq_R H \leq_R G_i$. Let $\Delta \in \hom_R(H,K)$. Let $\Psi_i \in \hom_R(G_i, H)$ and $P = G_1 \times_{\Psi_1, \Psi_2} G_2$. Let $\Phi_i = \Delta \circ \Psi_i$ and $Q = G_1 \times_{\Phi_1, \Phi_2} G_2$. Then, noting that $V(P) = V(Q)$ and $E(P) \se E(Q)$, we have $\Hat{\Phi}_i|_P = \Hat{\Psi}_i$, and every principal subgraph of $P$ is a principal subgraph of $Q$.
\end{lemma}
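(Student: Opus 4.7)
The plan is to unpack the claim into three layers, proceeding from purely formal inclusions to the one step that actually uses the right-resolving hypothesis. First I would verify that $P$ is literally a subgraph of $Q$ (modulo what appears to be a typo of $=$ for $\se$ on vertices). If $(I_1,I_2) \in V(P)$ then $\partial \Psi_1(I_1) = \partial \Psi_2(I_2)$, and applying $\partial \Delta$ yields $\partial \Phi_1(I_1) = \partial \Phi_2(I_2)$, so $(I_1,I_2) \in V(Q)$. Similarly, if $(e_1, e_2) \in E(P)$ then $\Psi_1(e_1) = \Psi_2(e_2)$, and applying $\Delta$ to both sides gives $\Phi_1(e_1) = \Phi_2(e_2)$, witnessing $(e_1, e_2) \in E(Q)$. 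The source and target maps agree on $P$ by construction.

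Second, the identity $\Hat{\Phi}_i|_P = \Hat{\Psi}_i$ is automatic: both are literally the coordinate projection to the $i$-th factor, on vertices and on edges.

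The main step is the principal-subgraph claim. It suffices to prove that $E_I(P) = E_I(Q)$ for every $I = (I_1, I_2) \in V(P)$, since then for any principal subgraph $C$ of $P$ and any $I \in V(C)$ we have $E_I(C) = E_I(P) = E_I(Q)$, making $C$ principal in $Q$. One inclusion is already done, so I would take $(e_1, e_2) \in E_I(Q)$ and aim to show it lies in $E_I(P)$. By hypothesis, $s(e_i) = I_i$ and $\Delta(\Psi_1(e_1)) = \Phi_1(e_1) = \Phi_2(e_2) = \Delta(\Psi_2(e_2))$. Let $\tilde{I} = \partial \Psi_1(I_1) = \partial \Psi_2(I_2)$; then $\Psi_i(e_i) \in E_{\tilde{I}}(H)$ for both $i$. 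Since $\Delta$ is right-resolving, its restriction $\Delta|_{E_{\tilde{I}}(H)}$ is a bijection, so $\Psi_1(e_1) = \Psi_2(e_2)$, which is exactly the condition for $(e_1, e_2) \in E(P)$.

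The one nontrivial ingredient is the final appeal to the right-resolving property of $\Delta$ on a single fiber $E_{\tilde{I}}(H)$; without this injectivity the inclusion $E_I(Q) \se E_I(P)$ fails, as an outgoing edge pair in $Q$ could use distinct $\Psi_i$-images that happen to coincide only after passing through $\Delta$. Everything else is diagram-chasing from \cref{defn-fiber}.
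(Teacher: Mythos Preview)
Your proof is correct and follows essentially the same route as the paper's: both reduce the principal-subgraph claim to showing $E_I(P) = E_I(Q)$ for each $I \in V(P)$, using the right-resolving property of $\Delta$. The only cosmetic difference is that the paper obtains this equality by a cardinality count ($|E_I(P)| = |E_{\partial\Psi_P(I)}(H)| = |E_{\partial\Phi_Q(I)}(K)| = |E_I(Q)|$) rather than your direct element-chase via injectivity of $\Delta|_{E_{\tilde I}(H)}$, and you are right that the asserted equality $V(P)=V(Q)$ should really be an inclusion under the paper's own \cref{defn-fiber}.
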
 

\begin{proof}
Let $C$ be a principal subgraph of $P$. By the definition of the fiber product, we have $V(P) = V(Q) = V(G_1) \times V(G_2)$. Let $I \in V(H)$ and let $I_i \in (\partial \Psi_i)^{-1}(I)$. Suppose that $(I_1, I_2) \in V(C)$. We need to show that $E_{(I_1, I_2)}(C) = E_{(I_1, I_2)}(Q)$.

Since $E_{(I_1, I_2)}(C) = E_{(I_1, I_2)}(P)$ by the definition of a principal subgraph, it is enough to show that $E_{(I_1, I_2)}(P) = E_{(I_1, I_2)}(Q)$ for any $(I_1, I_2) \in V(P)$. Clearly $E_{(I_1, I_2)}(P) \se E_{(I_1, I_2)}(Q)$, so it is enough to show that $|E_{(I_1, I_2)}(P)| = |E_{(I_1, I_2)}(Q)|$. To see this equality, note that since $\Delta \circ \Psi_P = \Phi_Q$, we have
\[
|E_{(I_1, I_2)}(P)| = |E_{\partial \Psi_P(I_1, I_2)}(H)| = |E_{\partial \Phi_Q(I_1,I_2)}(K)| = |E_{(I_1,I_2)}(Q)|
\]
where the equalities follow from the facts that $\Psi_P$, $\Delta$, and $\Phi_Q$ respectively are right-resolving. Therefore $E_{(I_1, I_2)}(P) = E_{(I_1, I_2)}(Q)$. This shows that $C$ is indeed a principal subgraph of $Q$. Moreover, for $e = (e_1, e_2) \in E(P)$, we have $\Hat{\Phi}_i(e) = e_i = \Hat{\Psi}_i(e)$, so indeed $\Hat{\Phi}_i|_P = \Hat{\Psi}_i$. 
\end{proof}

\begin{lemma}\label{prop-air-fact-ext}
Let $G_1,G_2, K$ be graphs with $K \leq_S G_i$ via $\Phi_i \in \hom_S(G_i,K)$. Let $C$ be a principal subgraph of $P = G_1 \times_{\Phi_1, \Phi_2} G_2$ such that the $\partial \Hat{\Phi}_i|_{V(C)}: V(C) \to V(G_i)$ are surjective. Then $\Hat{\Phi}_i|_C \in \hom_S(C,G_i)$. In particular, $C$ is a common synchronizing extension of the $G_i$.
\end{lemma}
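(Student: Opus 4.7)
The plan is to verify the hypotheses of \cref{prop-stab-sync-equiv} for $\hat{\Phi}_1|_C$, with the statement for $\hat{\Phi}_2|_C$ following by a symmetric argument. First I would observe that the assumption that $\partial \hat{\Phi}_1|_{V(C)}$ is surjective, together with $C$ being a principal subgraph of $P$, places us in the setting of the criterion at the end of \S \ref{subsec-graph-homo-basic-defns}, so $\hat{\Phi}_1|_C \in \hom_R(C, G_1)$ to begin with.

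The key step would be to establish a coordinate-wise transition formula: for any $(I_1, I_2) \in V(C)$ and any $u \in L_{I_1}(G_1)$,
\[
(I_1, I_2) \cdot u \;=\; \bigl(I_1 \cdot u, \; I_2 \cdot \Phi_1(u)\bigr),
\]
where the left-hand side is the transition in $C$ with respect to $\hat{\Phi}_1|_C$, the first coordinate on the right is the transition in $G_1$ with respect to $\Phi_1$, and the second is the transition in $G_2$ with respect to $\Phi_2$. This follows edge-by-edge from the definition of $P$ together with the right-resolving property of $\Phi_2$, which makes the $G_2$-component of the unique lift of each edge in $u$ uniquely determined; the principal-subgraph hypothesis is what ensures that this lift remains in $E(C)$, since $E_{(I_1, I_2)}(C) = E_{(I_1, I_2)}(P)$ for every $(I_1, I_2) \in V(C)$.

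With this formula in hand, fix $I_1 \in V(G_1)$ and let $I = \partial \Phi_1(I_1) \in V(K)$. By the definition of $V(P)$, every $(I_1, J) \in V(C)$ satisfies $J \in (\partial \Phi_2)^{-1}(I)$. Since $\Phi_2$ is synchronizing, \cref{prop-stab-sync-equiv} yields some $w \in L_I(K)$ with $|(\partial \Phi_2)^{-1}(I) \cdot w| = 1$. Lifting $w$ through the right-resolver $\Phi_1$ gives a unique $u \in L_{I_1}(G_1)$ with $\Phi_1(u) = w$. By the coordinate-wise formula, the image $(\partial \hat{\Phi}_1|_C)^{-1}(I_1) \cdot u$ has constant first coordinate $I_1 \cdot u$ and its second coordinates all collapse to the single state guaranteed by the choice of $w$, so the entire fiber is sent to one state. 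A second application of \cref{prop-stab-sync-equiv} then gives $\hat{\Phi}_1|_C \in \hom_S(C, G_1)$.

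I do not anticipate a serious obstacle: the one delicate point is confirming that the coordinate-wise transition formula describes transitions in $C$ rather than only in the ambient $P$, and that is exactly what the principal-subgraph assumption provides. As an alternative organization, one could instead show that $\Phi_P|_C = \Phi_i \circ \hat{\Phi}_i|_C$ is synchronizing (by a two-stage collapse, first using $\Phi_1$ on the first coordinate, then $\Phi_2$ on the second) and invoke \cref{thm-struct-sync-comp}(4), but the direct fiber-collapse argument above seems more economical.
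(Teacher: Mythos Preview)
Your argument is correct and the delicate point you flag (that the lifted path stays in $C$ because $C$ is principal) is handled properly. One small notational slip: writing ``$I_1 \cdot u$'' for the first coordinate and calling it a transition with respect to $\Phi_1$ is off, since $u$ already lives in $L_{I_1}(G_1)$; what you mean is simply $t(u)$. This does not affect the substance, since in the application $u$ is fixed once $I_1$ and $w$ are chosen.

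The paper takes precisely the alternative route you sketch at the end: it shows $\Phi_P|_C$ is synchronizing by a two-stage collapse (first coordinates via $\Phi_1$, then second coordinates via $\Phi_2$), and then invokes \cref{thm-struct-sync-comp}(4) to conclude that each $\hat{\Phi}_i|_C$ is synchronizing. Your direct fiber-collapse argument is a bit more economical and, as a bonus, actually shows something slightly sharper: to get $\hat{\Phi}_1|_C \in \hom_S(C, G_1)$ you only use that $\Phi_2$ is synchronizing (and that $\Phi_1$ is right-resolving), and symmetrically for $\hat{\Phi}_2|_C$. The paper's route has the minor advantage of proving one statement ($\Phi_P|_C$ synchronizing) from which both conclusions drop out at once via the structural theorem, which keeps the roles of the two factors symmetric throughout.
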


\begin{proof}
Let $I \in V(K)$ and let $I_i, I'_i \in V(G_i)$ with $\partial \Phi_i(I_i) = \partial \Phi_i(I'_i) = I$. Since the $\Phi_i$ are synchronizing, we have $I_i \sim_{\Phi_i} I'_i$. Suppose that $(I_1, I_2), (I'_1, I'_2) \in V(C)$. In order to show that $\Hat{\Phi}_i|_C \in \hom_S(C,G_i)$, we claim that $(I_1, I_2) \sim_{\Phi_P} (I'_1, I'_2)$. This will show that $\Phi_P|_C \in \hom_S(C,K)$. Since $\Phi_P|_C = \Phi_i \circ \Hat{\Phi}_i|_C$, it will then follow that $\Hat{\Phi}_i|_C \in \hom_S(C,G_i)$ by \cref{thm-struct-sync-comp}(4).

To prove this claim, let $u \in L_I(K)$. Since $I_1 \sim_{\Phi_1} I'_1$, there exists $v_1 \in L_{t(u)}(K)$ such that $I_1 \cdot uv_1 = I'_1 \cdot uv_1$. Similarly, since $I_2 \sim_{\Phi_2} I'_2$, there exists $v_2 \in L_{t(v_1)}(K)$ such that $I_2 \cdot uv_1 v_2 = I'_2 \cdot uv_1 v_2$. Then in particular $(I_1, I_2) \cdot u v_1 v_2 = (I'_1, I'_2) \cdot u v_1 v_2$. Since $u \in L_I(K)$ was arbitrary, we have $(I_1, I_2) \sim_{\Phi_P} (I'_1, I'_2)$ as claimed.
\end{proof}

\begin{proof}[Proof of \cref{cor-air-fact-ext-mg}(1)]
Let $K \leq_S G_i$ and $\Psi_i \in \hom_S(G_i,K)$. Let $P = G_1 \times_{\Psi_1, \Psi_2} G_2$ and let $C$ be a principal subgraph of $P$ such that $\partial \Hat{\Psi}_i|_{V(C)}: V(C) \to V(G_i)$ are surjective. By \cref{prop-air-fact-ext}, we have $\Hat{\Psi}_i|_C \in \hom_S(C,G_i)$. Let $\Delta \in \hom_R(K,M)$, let $\Phi_i = \Delta \circ \Psi_i$, and let $Q = G_1 \times_{\Phi_1, \Phi_2} G_2$. Then, by \cref{lemma-princ-fiber-embed}, $C$ is a principal subgraph of $Q$, with $\Hat{\Phi}_i|_C = \Hat{\Psi}_i|_C$. In particular, $\Hat{\Phi}_i|_C \in \hom_S(C,G_i)$.
\end{proof}

We now give an equivalent form of the $O(G)$ conjecture. Fragments of this result appear in \cref{cor-air-fact-ext-mg}. The logical structure of \cref{prop-og-equiv} (one statement is equivalent to the equivalence of four other statements) is unusual.

\begin{prop}\label{prop-og-equiv}
Let $\cF$ be a family of graphs satisfying the following conditions:
\begin{enumerate}[label=(\roman*)]
    \item If $G \in \cF$ and $H \leq_R G$, then $H \in \cF$.
    
    \item Let $G_1, G_2, K \in \cF$ with $K \leq_R G_i$. Let $\Phi_i \in \hom_R(G_i, K)$, let $P = G_1 \times_{\Phi_1, \Phi_2} G_2$, and let $C$ be a principal subgraph of $P$ such that the $\partial \Hat{\Phi}_i|_{V(C)}$ are surjective. Then $C \in \cF$.
\end{enumerate}

Then the following assertions are equivalent.

\begin{enumerate}[label=(\arabic*)]
    \item For any $G \in \cF$, there exists a unique $\leq_S$-minimal graph $O(G) \leq_S G$.
    
    \item For any $G_1, G_2 \in \cF$, the following assertions are equivalent.
    
    \begin{enumerate}[label=(\alph*)]
        \item $O(G_1), O(G_2)$ exist and are equal.
        
        \item $G_1, G_2$ have a common synchronizing factor.
        
        \item $M(G_1) = M(G_2) = M$ and there exist $\Phi_i \in \hom_R(G_i,M)$ such that $\Hat{\Phi}_i \in \hom_S(C,G_i)$ for some principal subgraph $C$ of $Q = G_1 \times_{\Phi_1, \Phi_2} G_2$.
        
        \item $G_1, G_2$ have a common synchronizing extension $K \in \cF$.

    \end{enumerate}
\end{enumerate}

\end{prop}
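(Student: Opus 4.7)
The plan is to prove both implications. For (1) $\Rightarrow$ (2), assuming every $G \in \cF$ admits a unique $O(G)$, I will establish the four-way equivalence by the cycle (a) $\Rightarrow$ (b) $\Rightarrow$ (c) $\Rightarrow$ (d) $\Rightarrow$ (a). For (2) $\Rightarrow$ (1), I will specialize $G_1 = G_2 = G$ to obtain existence of $O(G)$ and apply the equivalence to pairs of $\leq_S$-minimal elements below $G$ to obtain uniqueness.

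For the forward direction, (a) $\Rightarrow$ (b) is immediate, as $O(G_1) = O(G_2)$ is itself a common synchronizing factor. For (b) $\Rightarrow$ (c), suppose $K$ is a common synchronizing factor with $\Psi_i \in \hom_S(G_i, K)$; transitivity of $\leq_R$ and the $\leq_R$-minimality of $M(G_i)$ and $M(K)$ force $M(G_1) = M(G_2) = M(K) = M$. Pick $\Delta \in \hom_R(K, M)$, set $\Phi_i = \Delta \circ \Psi_i$, and form $P = G_1 \times_{\Psi_1, \Psi_2} G_2$ and $Q = G_1 \times_{\Phi_1, \Phi_2} G_2$. Applying \cref{prop-air-fact-ext} to $P$ itself (whose projections $\Hat{\Psi}_i$ are surjective since the $\Psi_i$ are) gives $\Hat{\Psi}_i \in \hom_S(P, G_i)$; by \cref{lemma-princ-fiber-embed}, $P$ is a principal subgraph of $Q$ with $\Hat{\Phi}_i|_P = \Hat{\Psi}_i$, so taking $C = P$ yields (c). The step (c) $\Rightarrow$ (d) is then nearly immediate: $C$ is a common synchronizing extension, and $C \in \cF$ by hypothesis (ii) (using that $M \in \cF$ by hypothesis (i), since $M \leq_R G_i \in \cF$). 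Finally, (d) $\Rightarrow$ (a) follows from \cref{lemma-poset}: hypothesis (1) ensures $O(K)$ exists, and $G_i \leq_S K$ forces $O(G_i) = O(K)$ for $i = 1, 2$, so $O(G_1) = O(G_2)$.

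For the reverse direction, assume the four-way equivalence holds for all pairs in $\cF$. Given $G \in \cF$, take $G_1 = G_2 = G$; assertion (d) holds trivially with $K = G$ (the identity is a synchronizer), so (a) yields the existence of $O(G)$. For uniqueness, let $H_1, H_2$ be $\leq_S$-minimal elements of the finite, nonempty set $\{ K \, | \, K \leq_S G \}$; by (i), $H_i \in \cF$, and $G \in \cF$ is a common synchronizing extension of $H_1, H_2$, so (d) $\Rightarrow$ (a) yields $O(H_1) = O(H_2)$. Since $O(H_i) \leq_S H_i \leq_S G$ and $H_i$ is $\leq_S$-minimal in this set, we have $O(H_i) = H_i$, so $H_1 = H_2$.

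The only genuine subtlety lies in (b) $\Rightarrow$ (c): one must translate the synchronizing fiber product over $K$ into a statement about a principal subgraph of the fiber product over $M$, which is precisely the content of \cref{lemma-princ-fiber-embed} paired with \cref{prop-air-fact-ext}. Beyond that, the argument is mostly bookkeeping, but one has to invoke the closure hypotheses (i) and (ii) at the right moments so that every graph appearing in the argument (namely $K$, the fiber product, $M$, and the various $H_i$) lies in $\cF$ and may be fed back into hypothesis (1).
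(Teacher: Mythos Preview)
Your proof is correct and follows essentially the same route as the paper. Both arguments establish the cycle (a) $\Rightarrow$ (b) $\Rightarrow$ (c) $\Rightarrow$ (d) $\Rightarrow$ (a) for (1) $\Rightarrow$ (2); the paper packages your (b) $\Rightarrow$ (c) step as a citation of \cref{cor-air-fact-ext-mg}(1), whose proof is exactly the combination of \cref{prop-air-fact-ext} and \cref{lemma-princ-fiber-embed} you spell out (with $C=P$ rather than a smaller principal subgraph, which is fine since (c) in the proposition only asks for a principal subgraph, not a principal component). For (2) $\Rightarrow$ (1), the paper applies (d) $\Rightarrow$ (b) to two $\leq_S$-minimal $H_1,H_2 \leq_S G$ and uses minimality directly; your $G_1=G_2=G$ trick via (d) $\Rightarrow$ (a) is the variant flagged in the paper's own remark, and already yields both existence and uniqueness of $O(G)$, so your subsequent ``For uniqueness'' paragraph is redundant (though not wrong).
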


\begin{rmk}
The proof of \cref{prop-og-equiv} in fact shows that both (1) and (2) are equivalent to the assertion that (d) implies (b). Note that for $G_1 = G_2 = G$, (a) states that $O(G)$ is well-defined, while (b)--(d) are trivial.
\end{rmk}

\begin{rmk}
The $O(G)$ conjecture states that the equivalent statements (1) and (2) in \cref{prop-og-equiv} hold with $\cF$ equal to the class of all strongly connected graphs.
\end{rmk}

\begin{proof}[Proof of \cref{prop-og-equiv}]
First, assume (2). Let $G$ be a graph and let $H_1, H_2 \leq_S G$ be $\leq_S$-minimal. Since $H_1, H_2$ have the common synchronizing extension $G$, they satisfy condition (d), so they also satisfy condition (b), i.e. there exists a common synchronizing factor $K \leq_S H_i$. But since the $H_i$ were assumed minimal, we must have $H_1 = K = H_2 = O(G)$.

Now, assume (1) and deduce (2) as follows. Trivially, (a) implies (b) and (c) implies (d). Moreover, (b) implies (c) by \cref{cor-air-fact-ext-mg}(1). Finally, assume (d). Suppose that $G_1, G_2$ have a common synchronizing extension $K$. Then $O(G_1) = O(K) = O(G_2)$, so (d) implies (a).
\end{proof}

\begin{proof}[Proof of \cref{cor-air-fact-ext-mg}(2)]
This is immediate from \cref{prop-og-equiv}, specifically, the equivalence of 2(b) and 2(c), with $\cF$ taken to be the class of all strongly connected graphs.
\end{proof}

\begin{proof}[Proof of \cref{cor-air-fact-ext-mg}(3)]
By \cref{thm-alm-og-exists}, this is immediate from \cref{prop-og-equiv}, again via the equivalence of 2(b) and 2(c), but with $\cF$ taken to be the class of strongly connected almost bunchy graphs.
\end{proof}

\begin{prop}\label{prop-bfc-equiv}
Let $\cF$ be a family of graphs such that, if $G \in \cF$ and $H \leq_R G$, then $H \in \cF$. Then the following assertions are equivalent.

\begin{enumerate}[label=(\arabic*)]
    \item Any $\leq_S$-minimal graph $H \in \cF$ is bunchy.

    \item For any $G \in \cF$, there exists some bunchy $H \leq_S G$.
    
    \item For any non-bunchy $G \in \cF$, there exists some $\Phi \in \hom_R(G,M(G))$ with $\sim_{\Phi}$ nontrivial.
    
    \item For any $G \in \cF$, $B(G) \leq_S G$.
\end{enumerate}

\end{prop}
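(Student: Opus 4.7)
The plan is to establish the cyclic chain (1) $\Leftrightarrow$ (2) $\Leftrightarrow$ (3) and separately (2) $\Leftrightarrow$ (4). Three of these four implications are essentially immediate from the structural results of \S \ref{sec-stab-sync} and \S \ref{sec-bunchy}; the main obstacle is (3) $\Rightarrow$ (2), the only step that requires an induction.

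For (1) $\Leftrightarrow$ (2): assuming (1), any $\leq_S$-minimal $H \leq_S G$ (which exists since $\leq_S$ is a partial order on a finite set, and lies in $\cF$ by the closure hypothesis) is bunchy, yielding (2). Conversely, if $H \in \cF$ is $\leq_S$-minimal, then (2) applied with $G = H$ produces a bunchy $K \leq_S H$, and minimality forces $K = H$. For (2) $\Leftrightarrow$ (4): (4) $\Rightarrow$ (2) holds trivially via $H = B(G)$. For the converse, given bunchy $H \leq_S G$ via some $\Psi \in \hom_S(G, H)$, maximality of $B(G)$ among bunchy right-resolving factors of $G$ gives $H \leq_R B(G)$, and \cref{prop-bg-exists}(2) factors $\Psi = \Theta \circ \Delta$ with $\Delta \in \hom_R(G, B(G))$ and $\Theta \in \hom_R(B(G), H)$. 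Then \cref{thm-struct-sync-comp}(4) forces $\Delta$ to be synchronizing, so $B(G) \leq_S G$.

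For (2) $\Rightarrow$ (3), take a non-bunchy $G \in \cF$ and a bunchy $H \leq_S G$ supplied by (2), via some $\Psi \in \hom_S(G, H)$. Since $G$ is non-bunchy while $H$ is bunchy, $\Psi$ is not a graph isomorphism, so there exist distinct $I_1, I_2 \in V(G)$ with $\partial \Psi(I_1) = \partial \Psi(I_2)$; synchronization of $\Psi$ then gives $I_1 \sim_{\Psi} I_2$. Using the identification $M(H) = M(G)$ (since $M(H) \leq_R H \leq_R G$ is $\leq_R$-minimal below $G$), pick any $\Delta \in \hom_R(H, M(G))$ and set $\Phi = \Delta \circ \Psi \in \hom_R(G, M(G))$. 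By \cref{thm-struct-sync-comp}(1), every $\sim_\Psi$-class is contained in a $\sim_\Phi$-class, so $I_1 \sim_\Phi I_2$, witnessing non-triviality of $\sim_\Phi$.

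Finally, (3) $\Rightarrow$ (2) proceeds by induction on $|V(G)|$. The base case is $G$ bunchy, where $H = G$ suffices. Otherwise, (3) supplies $\Phi \in \hom_R(G, M(G))$ with $\sim_\Phi$ nontrivial; let $G' = G/\sim_\Phi$, which satisfies $G' \leq_S G$ by \cref{thm-struct-sync-comp}(2), lies in $\cF$ by the closure hypothesis, and has $|V(G')| < |V(G)|$ by non-triviality of $\sim_\Phi$. The inductive hypothesis yields a bunchy $H \leq_S G'$, and transitivity of $\leq_S$ gives $H \leq_S G$, completing the step. This is the only place where the closure of $\cF$ under $\leq_R$ is essential, and no part of the argument requires strong connectedness; the strongly connected version \cref{prop-bfc-equiv-new} then follows simply by instantiating $\cF$ as the class of strongly connected graphs.
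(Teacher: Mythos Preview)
Your proof is correct and largely parallels the paper's argument: the treatments of (1)$\Leftrightarrow$(2), (2)$\Rightarrow$(3), and (2)$\Leftrightarrow$(4) are essentially identical to the paper's. The one genuine difference is in how you close the cycle among (1)--(3). You prove (3)$\Rightarrow$(2) by induction on $|V(G)|$, passing to $G/{\sim_\Phi}$ and invoking the inductive hypothesis. The paper instead proves (3)$\Rightarrow$(1) directly by contrapositive: if $G\in\cF$ is non-bunchy, then (3) supplies $\Phi$ with $\sim_\Phi$ nontrivial, whence $G/{\sim_\Phi}\neq G$ and $G/{\sim_\Phi}\leq_S G$, so $G$ is not $\leq_S$-minimal---no induction needed. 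Both routes are valid; the paper's is shorter and makes (1) the natural pivot, while your inductive argument has the minor expository advantage of showing concretely how one descends from an arbitrary $G$ to a bunchy synchronizing factor (mirroring the recursive strategy discussed elsewhere in the paper).
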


\begin{rmk}
Note that \cref{prop-bfc-equiv} is a more detailed version of \cref{prop-bfc-equiv-new}.
\end{rmk}

\begin{proof}
To see that (1) implies (2), let $G \in \cF$ and consider the set $\{ H \, | \, H \leq_S G  \}$. Being a finite partially ordered set, this set must have at least one minimal element. If (1) holds, then this minimal element is bunchy. Thus (2) holds.

To see that (2) implies (3), let $G \in \cF$ be non-bunchy. There exists at least one $\leq_S$-minimal graph $H \leq_S G$. If (2) holds, then $G$ is not $\leq_S$-minimal, so $H \neq G$. Let $\Psi \in \hom_S(G,H)$. The $\partial \Psi$-fibers are precisely the $\sim_{\Psi}$-classes. Since $H \neq G$, the $\partial \Psi$-fibers are not merely singletons, so $\sim_{\Psi}$ is nontrivial. Let $M=M(G)$ and $\Delta \in \hom_R(H,M)$, and let $\Phi = \Delta \circ \Psi$. By \cref{lemma-stab-comp}, the $\sim_{\Phi}$-classes are unions of $\sim_{\Psi}$-classes, so in particular, $\sim_{\Phi}$ is nontrivial.

To see that (3) implies (1), let $G \in \cF$ be non-bunchy. If (3) holds, then there exists $\Phi \in \hom_R(G,M(G))$ with $\sim_{\Phi}$ nontrivial. Then $G/\sim_{\Phi} \leq_S G$ and $G/\sim_{\Phi} \neq G$; in particular, $G$ is not $\leq_S$-minimal. This proves (1) in the contrapositive.

Finally, we show that (2) and (4) are equivalent. If (4) holds, then (2) holds since $B(G)$ is bunchy. Conversely, assume (2) and let $G \in \cF$. Then there exists $H \leq_S G$ bunchy. Let $\Phi \in \hom_S(G,H)$ and let $B = B(G)$. By \cref{prop-bg-exists}(2) and \cref{thm-struct-sync-comp}(4), we have $\Phi = \Delta \circ \Theta$ for some $\Theta \in \hom_S(G,B)$, $\Delta \in \hom_S(B,H)$. Therefore (4) holds.
\end{proof}

\begin{cor}\label{cor-bunchy-implies-o}
Let $\cF$ be a family of graphs such that, if $G \in \cF$ and $H \leq_R G$, then $H \in \cF$. Suppose that any $\leq_S$-minimal element of $\cF$ is bunchy. Then $O(G)$ exists for any $G \in \cF$.
\end{cor}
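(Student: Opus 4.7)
The plan is to mirror the proof of \cref{prop-bunchy-implies-o}, replacing the class of all strongly connected graphs with the family $\cF$. Fix $G \in \cF$ and set $A = \{H : H \leq_S G, \, H \text{ is } \leq_S\text{-minimal}\}$. It suffices to show that $|A| = 1$, for then its unique element serves as $O(G)$.

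The first step is to observe that $A$ is nonempty: the set $\{H : H \leq_S G\}$ is a nonempty finite poset under $\leq_S$ (it contains $G$), hence has at least one minimal element. The second step is to show that every $H \in A$ is bunchy. Indeed, since $\leq_S$ refines $\leq_R$, we have $H \leq_R G$, so the closure assumption on $\cF$ yields $H \in \cF$. Because $\cF$ is closed under right-resolving factors, and thus in particular under synchronizing factors, a graph in $\cF$ is $\leq_S$-minimal in the absolute sense if and only if it is $\leq_S$-minimal within $\cF$; consequently the hypothesis of the corollary applies to $H$, forcing $H$ to be bunchy.

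For uniqueness, let $H_1, H_2 \in A$. Both are bunchy synchronizing factors of $G$, so \cref{prop-bunchy-same-o} gives $O(H_1) = O(H_2)$. Since each $H_i$ is $\leq_S$-minimal and $O(H_i) \leq_S H_i$, we must have $O(H_i) = H_i$, and therefore $H_1 = H_2$. This yields $|A| \leq 1$, completing the argument.

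The substantive content has already been packaged into \cref{prop-bunchy-same-o}, whose proof constructs a common bunchy extension via the fiber product and then applies \cref{thm-alm-og-exists} to recover $O$ on that extension. Consequently no step of the present corollary is a genuine obstacle; the only point deserving a line of care is the identification of $\leq_S$-minimality in $\cF$ with $\leq_S$-minimality among all graphs, which the closure hypothesis handles immediately.
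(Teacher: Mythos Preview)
Your proof is correct and follows exactly the approach the paper indicates: a trivial adaptation of the proof of \cref{prop-bunchy-implies-o}, with the closure of $\cF$ under $\leq_R$ supplying the one extra line needed to ensure that the $\leq_S$-minimal factors of $G$ lie in $\cF$ and hence are bunchy. Your explicit unpacking of $O(H_i)=H_i$ from \cref{prop-bunchy-same-o} is the same step the paper compresses into ``$|A|\leq 1$''.
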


The proof of \cref{cor-bunchy-implies-o} is a trivial adaptation of the proof of \cref{prop-bunchy-implies-o}.

\begin{proof}[Proof of \cref{cor-air-fact-ext-mg}(4)]
Assuming the bunchy factor conjecture, the $O(G_i)$ are well-defined. By the equivalence of conditions (a),(c) in \cref{prop-bfc-equiv}, the hypothesis on the $B(G_i)$ is equivalent to the equality $O(B(G_1)) = O(B(G_2))$, which in turn is equivalent to the equality $O(G_1) = O(G_2)$ by \cref{prop-bfc-equiv}.
\end{proof}


\section{Proof of \cref{thm-bfc-cyc-bunch}, following Trahtman}\label{sec-traht}

In this section, we recall Trahtman's proof of the road colouring theorem and reformulate it in a form applicable to \cref{thm-bfc-cyc-bunch}.

\subsection{Systems of maps with unique tallest trees}\label{subsec-traht-utt}

Trahtman's proof uses an idea well known in the literature on the combinatorics of transformations of finite sets---namely, that, the graph of a transformation $f$ of a set $V$ is a directed graph of constant out-degree $1$ with $V$ for its set of states where, for each state $I \in V$, the unique edge with source $I$ has target $I \cdot f$. When $V$ is finite, the graph consists of a set of state-disjoint directed cycles together with trees rooted on the cycles, directed toward their roots. Trahtman's proof is based on the construction of a transformation with a unique tallest tree (the height of a tree being the maximum distance from the root to a leaf). We need a few definitions in order to extend the notion of a unique tallest tree to a system of maps taking one set to another, cyclically, as opposed to a transformation of a single set.

Let $p \in \N$ and let $\{ V_i  \}_{0 \leq i \leq p-1}$ be disjoint finite sets. Let $a_i: V_i \to V_{i+1}$ be maps, with subscripts read modulo $p$. For $k \geq 0$, let $b_k = a_k a_{k+1} \cdots a_{p-1} a_0 \cdots a_{k-1}: V_k \to V_k$. As noted above in the general discussion of graphs of transformations, since each $b_k$ is a transformation of a finite set, it is eventually periodic---that is, for each $k$, there exist $m \geq 0$, $z \geq 1$ such that $b_k^{m+z} = b_k^m$. Moreover, the orbits of individual elements $I \in V_k$ may vary in their eventually periodic behaviour. Specifically, for a given $I \in V_k$, consider the  lexicographically minimal $(\ell,m,z)$ with $0 \leq \ell \leq p-1$, $m \geq 0$, $z \geq 1$, such that $I \cdot a_k \cdots a_{k+\ell-1} b_{k+\ell}^{m+z} = I \cdot a_k \cdots a_{k+\ell-1} b_{k+\ell}^{m}$. Define the \textit{height} $h(I) = mp+\ell$ and the \textit{root} $\rho(I) = I \cdot a_k \cdots a_{k+\ell-1} b_{k+\ell}^{m+z}$. The idea is that $h(I)$ is the number of steps required until the orbit of $I$ reaches the root $\rho(I)$ and becomes periodic, with $z(I)=z$ being the period as a multiple of $p$.

To talk about unique tallest trees, let $h_{\max,k} = \max \{ h(I) \, | \, I \in V_k  \}$, and let $h_k(J) = \max \{ h(I) \, | \, I \in V_k, \, \rho(I) = J \}$. Let $z_k = \mathrm{lcm} \{ z(I) \, | \, I \in V_k \}$. We say that the system $(V_i, a_i)_{0 \leq i \leq p-1}$ has a \textit{unique tallest tree} at $V_k$ if there is a unique $J$ with $h_k(J) = h_{\max,k}$. Note that the terms we have defined here still make sense even if the $V_i$ are not all pairwise disjoint, as long as any two are either equal or disjoint, since we can make them disjoint by replacing $V_i$ with $V_i \times \{ i \}$.

We now present our interpretation of a key step in Trahtman's proof, applying \cref{lemma-min-diff-stab} and closely following \cite{bpr-09-ca}. 

\begin{lemma}\label{lemma-utt-stab}
Let $G,H$ be strongly connected graphs with $H \leq_R G$. Let $\Phi \in \hom_R(G,H)$. Let $I_0, \dots I_{p-1} \in V(H)$, not necessarily distinct, be such that $I_{i+1} \in F(I_i) \neq \emptyset$, and let $a_i \in E_{I_i I_{i+1}}(H)$ (with subscripts read modulo $p$). Suppose that the system $((\partial \Phi)^{-1}(I_i), a_i)_{0 \leq i \leq p-1}$ has a unique tallest tree, where we write $a_i$ for the map $I \mapsto I \cdot a_i$, $I \in (\partial \Phi)^{-1}(I_i)$. Then $\sim_{\Phi}$ is nontrivial.
\end{lemma}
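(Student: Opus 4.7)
The plan is to apply \cref{lemma-min-diff-stab} by exhibiting two minimal images, arising from a common starting fiber, whose symmetric difference consists of a specific pair of distinct states identified using the unique tallest tree. After relabeling the cycle so that the unique tallest tree sits at $V_0 := (\partial \Phi)^{-1}(I_0)$, let $r = h_{\max,0}/p$; the degenerate case $r = 0$ forces $|V_0| = 1$, so I may assume $r \geq 1$. Write $w = a_0 a_1 \cdots a_{p-1} \in L_{I_0 I_0}(H)$, so that $b_0$ is the transition on $V_0$ induced by $w$. Let $J \in V_0$ be the unique root with $h_0(J) = rp$, let $L \in V_0$ be a leaf with $h(L) = rp$ and $\rho(L) = J$, set $L_1 = L \cdot b_0^{r-1}$ (so $L_1$ lies at $b_0$-distance $1$ above $J$ and $L_1 \cdot b_0 = J$), and let $J^{\leftarrow} \in V_0$ be the cyclic predecessor of $J$ on its $b_0$-cycle, so that $J^{\leftarrow} \cdot b_0 = J$ and $J^{\leftarrow} \neq L_1$ (one being a tree element and the other cyclic). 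Take $N$ to be the order of $b_0$ restricted to the set of $b_0$-cyclic elements of $V_0$; then $b_0^N$ fixes every cyclic element of $V_0$ while sending $L_1$ to $J \cdot b_0^{N-1} = J^{\leftarrow}$.

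By \cref{lemma-sc-min-img}, there is a minimal image $U_0$ containing $L$, say $U_0 = (\partial \Phi)^{-1}(I_{k'}) \cdot v$ for some $I_{k'} \in V(H)$ and some $v \in L_{I_{k'} I_0}(H)$. Set $U = U_0 \cdot b_0^{r-1} = (\partial \Phi)^{-1}(I_{k'}) \cdot vw^{r-1}$, which remains minimal of the same size as $U_0$ and contains $L_1$. The crucial structural claim is that $L_1$ is the unique tree element of $U$ (with all other elements cyclic in the $b_0$-dynamics on $V_0$), and that $J^{\leftarrow} \notin U$. Any tree element of $U$ must arise as the $b_0^{r-1}$-image of some element of $U_0$ with $b_0$-height at least $r$; the unique tallest tree hypothesis forces such an element to lie in the tree rooted at $J$, so its image under $b_0^{r-1}$ must lie at $b_0$-height $1$ above $J$. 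Since every height-$1$ predecessor of $J$ maps to $J$ under $b_0$, and since $b_0|_U$ is injective (by minimality of $U$), at most one such element can occur in $U$; the presence of $L \in U_0$ then makes this unique element $L_1$. The same injectivity argument excludes $J^{\leftarrow}$ from $U$, as $L_1$ and $J^{\leftarrow}$ would both map to $J$.

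These structural facts immediately give $U \cdot b_0^N = (U \setminus \{L_1\}) \cup \{J^{\leftarrow}\}$, so the two minimal images $U$ and $U \cdot b_0^N$ arise from the same fiber $(\partial \Phi)^{-1}(I_{k'})$ via the words $vw^{r-1}, vw^{r-1+N} \in L_{I_{k'} I_0}(H)$ and have symmetric difference exactly $\{L_1, J^{\leftarrow}\}$. Applying \cref{lemma-min-diff-stab} then yields $L_1 \sim_{\Phi} J^{\leftarrow}$, so $\sim_{\Phi}$ is nontrivial. The principal technical obstacle is the uniqueness verification for tree elements of $U$ in the second paragraph; this is precisely where the uniqueness of the tallest tree (not merely its existence at some height $r$) is essential, since without uniqueness, minimality of $U$ could force $L \notin U_0$ or introduce multiple surviving tree elements whose images under $b_0^N$ would not produce the clean swap required.
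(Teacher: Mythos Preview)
Your overall strategy---take a minimal image containing a leaf of maximal height, push it forward until only that leaf's descendant remains off-cycle, then iterate one full period to obtain two minimal images whose symmetric difference has two elements and invoke \cref{lemma-min-diff-stab}---is exactly the paper's. But by working exclusively with the composed map $b_0$ (i.e.\ full iterates of $w=a_0\cdots a_{p-1}$), you discard resolution that the hypothesis actually gives, and the uniqueness claim in your second paragraph does not go through.

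There are two specific problems. First, $r=h_{\max,0}/p$ need not be an integer: writing $h_{\max,0}=mp+\ell$ with $0\le\ell<p$, nothing in the hypothesis forces $\ell=0$, and in particular the root need not lie in $V_0$. Second, and more seriously, even when $\ell=0$ the inference ``the unique tallest tree hypothesis forces such an element to lie in the tree rooted at $J$'' is unjustified. An element $x\in V_0$ with $b_0$-height equal to $r$ has $a$-height somewhere in the interval $((r-1)p,\,rp]$; the hypothesis constrains only those $x$ with $a$-height exactly $h_{\max,0}=rp$, not those with strictly smaller $a$-height in that interval. Concretely, take $p=2$, $V_0=\{A,B,D,E\}$, $V_1=\{C,F,G\}$, with $a_0$ sending $A\mapsto C$, $B\mapsto G$, $D,E\mapsto F$ and $a_1$ sending $C,G\mapsto A$, $F\mapsto D$. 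Then $h_{\max,0}=h(B)=2$ with unique root $A\in V_0$, so the lemma's hypothesis holds and your $r=1$, $J=A$, $L=L_1=B$; yet $E$ also has $b_0$-height $1$ with $b_0$-root $D\neq J$, so your inference fails for $E$, and nothing in your argument excludes $E$ from $U_0$. The paper's proof avoids this by advancing exactly $h_{\max,0}-1$ individual $a$-steps (landing in $V_{\ell-1}$, not in $V_0$), which isolates the single element of $a$-height $h_{\max,0}$ rather than the coarser class of elements of maximal $b_0$-height.
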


\begin{proof}
For $0 \leq i \leq p-1$, let $V_i = (\partial \Phi)^{-1}(I_i)$. Suppose without loss of generality that the system has a unique tallest tree at $V_0$. Let $I \in V_0$ be a state of maximal height $h(I) = h_{\max,0}$. Let $m \geq 0$, $0 \leq p-1$ be such that $h_{\max,0} = mp + \ell$, and let $z = z(I)$ and let $R = \rho(I)$. Note that $R \in V_{\ell}$. That is, upon cyclic application of the maps $a_i$, $I$ is eventually mapped to $R$, then returns to $R$ every $z$ cycles around the graph.

By strong connectedness and \cref{lemma-sc-min-img}, let $U \se I_0$ be a minimal image such that $I \in U$. We claim that there is no other state $I' \in U$ with $\rho(I') = R$ and $I' \neq I$. Indeed, suppose that there is such a state $I'$. Then $I \cdot a_0 \cdots a_{\ell-1} b_{\ell}^m = I' \cdot a_0 \cdots a_{\ell-1} b_{\ell}^m = R$, so $| U \cdot a_0 \cdots a_{\ell-1} b_{\ell}^m | < |U|$, contradicting the minimality of $U$. This proves the claim. Let $U_0 = U \sm \{ I \}$. Then every element of $U_0$ has height strictly less than $h_{\max,0}$.

Let $u_1 = a_0 \cdots a_{\ell-1} b_{\ell}^{m-1} a_{\ell} \cdots a_{\ell-2}: V_0 \to V_{\ell-1}$, where the tail $a_{\ell} a_{\ell+1} \cdots a_{\ell-2}$ includes each $a_i$ exactly once, other than $a_{\ell-1}$, and the subscripts are read modulo $p$. The effect of applying $u_1$ to $V_0$ is to bring $I$ to one step before its first encounter its root $R$; since $I$ has maximal height, every other $I' \in V_0$ has already reached its root and is in the periodic part of its orbit after application of $u_1$.

Let $u_2 = u_1 b_{\ell-1}^{z_{\ell}} = a_0 \cdots a_{\ell-1} b_{\ell-1}^{m + z_k} a_{\ell} \cdots a_{\ell-2}: V_0 \to V_{\ell-1}$. Let $J_i = I \cdot u_i$ and $U_i = U \cdot u_i$. As observed in the previous paragraph, after application of $u_1$, $I$ is not yet in the periodic part of its orbit (i.e. the orbit under cyclic application of the maps $a_i$), but every other element of $U_1$ is in the periodic part of its orbit. 

Observe that $J_1 \neq J_2$ by the assumed value of $h(I)$. However, since $I$ is the unique element of $U$ with this maximal height, we have $U_1 \Delta U_2 = \{ J_1, J_2 \}$. Since the $U_i$ are minimal images, we have $J_1 \sim_{\Phi} J_2$ by \cref{lemma-min-diff-stab}.
\end{proof}


\subsection{Obtaining a right-resolver with a unique tallest tree}\label{subsec-traht-app}

Let $G$ be a graph. We define a \textit{total order colouring} to be a total ordering of each edge set $E_I(H)$, i.e. a labeling of the edges of $G$ such that, if $|E_{I}(G)| = k$, then the edges in $E_{I}(G)$ are labeled bijectively by $\{ 0, \dots, k-1 \}$. Suppose that $M = M(G)$ is a cycle of bunches. Then, once a total order colouring of $M$ is fixed, total order colourings of $G$ correspond bijectively with right-resolvers $\Phi \in \hom_R(G,M)$. 

Letting $V(M) = \{ I_0, \dots, I_{p-1} \}$, there is exactly one edge $a_i \in E_{I_i}(H)$ labeled $0$ for each $i$. This yields a subgraph $W$ of $G$ consisting of edges labeled $0$, which is a spanning subgraph of $G$ of constant out-degree $1$. Every graph of constant out-degree $1$ consists of a set of state-disjoint cycles, together with trees rooted on the cycles, directed toward their roots. The height of a tree is the maximum path length from a state in the tree to its root. Observe that the system $( (\partial \Phi)^{-1}(I_i), a_i )_{i=0}^{p-1}$ has a unique tallest tree, in the sense of mappings, if and only if there is a unique tallest tree in $W$.

We now present our interpretation of the main technical lemma in the proof of the road colouring theorem (Lemma 10.4.6 in \cite{bpr-09-ca}). The following is not how the lemma is stated in \cite{bpr-09-ca}, but one can follow the proof and observe that it is equivalent. 

\begin{lemma}[Trahtman]\label{lemma-traht-utt}
Let $G$ be a strongly connected graph such that $M(G)$ is a cycle of bunches. At least one of the following is true:
\begin{enumerate}
    \item $G$ is itself a cycle of bunches.

    \item $G$ has two distinct bunches whose outgoing edges have the same target.
    
    \item $G$ admits a total order colouring with a unique tallest tree.
\end{enumerate}
\end{lemma}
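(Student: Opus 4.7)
The plan is to argue by contradiction, following Trahtman's strategy. Assume both (1) and (2) fail, and construct a total order colouring of $G$ with a unique tallest tree. Since (1) fails and $G$ is strongly connected, $G$ contains at least one non-bunch state, i.e.\ some $v \in V(G)$ with $|F(v)| \geq 2$. Since (2) fails, no two distinct bunches of $G$ share a target. Write $V(M(G)) = \{I_0, \dots, I_{p-1}\}$ with $F(I_i) = \{I_{i+1}\}$, and set $V_i = \Sigma_G^{-1}(I_i)$. As observed in \S \ref{subsec-traht-app}, once a total order colouring of $M(G)$ is fixed, total order colourings of $G$ correspond bijectively with right-resolvers $G \to M(G)$, and the spanning subgraph $W$ of $G$ consisting of edges labelled $0$ has constant out-degree $1$; by the cycle-of-bunches structure of $M(G)$, every edge of $W$ goes from some $V_i$ into $V_{i+1}$, so cycles of $W$ have length a positive multiple of $p$, with trees directed toward their roots on these cycles.

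Next I would fix an extremal total order colouring: among all total order colourings of $G$, pick one whose associated $W$ minimises the number of states lying on cycles of $W$ (equivalently, maximises the total tree mass), breaking ties by lexicographically maximising the sorted sequence of tree heights. Such an optimum exists since $V(G)$ is finite. The aim is to show that under this choice, the tallest tree of $W$ is unique, which is precisely condition (3).

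The heart of the argument is a local \emph{swap} operation: at any non-bunch $v$, exchanging the labels of two outgoing edges yields a new total order colouring whose associated $W'$ differs from $W$ only in the distinguished outgoing edge of $v$. Suppose, for contradiction, that $W$ has two trees $T_1, T_2$ attaining the maximum tree height $h$, and let $u_i$ be a leaf of $T_i$ at depth $h$. I would use strong connectedness together with the failure of (2) to locate a non-bunch $v$ on a carefully chosen path in $W$ or in $G$ relating $u_1, u_2$ to their roots, at which a swap can be performed so that in $W'$ either (a) the heights of $T_1, T_2$ become unequal and no new pair of tallest trees is introduced, giving (3) directly, or (b) the number of states on cycles in $W'$ is strictly smaller than in $W$ (or the secondary monovariant strictly improves), contradicting extremality. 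Iterating if necessary, we reach (3).

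The main obstacle is the design of the swap and the verification of its global effect on $W$. Changing the target of a single edge in $W$ can merge a tree into a cycle, break a cycle, or redirect an entire subtree, so making sure the swap achieves (a) or (b) rather than creating a fresh pair of equally tall trees elsewhere requires delicate analysis of the backbone path from $v$ back to its cycle in $W$. This is where the failure of (2) is essential: it prevents pathological configurations in which every swap merely relocates a tie. In combination with the failure of (1), it guarantees that every sufficiently long directed path in $G$ passes through a non-bunch at which a useful swap is available, which is exactly the technical content of Trahtman's argument in \cite{ant-09-ijm} and its presentation in \cite{bpr-09-ca}. The generalisation from $M(G) = M_D$ to a general cycle of bunches $M(G)$ adds only the bookkeeping of tracking fiber indices modulo $p$, since all tree and cycle heights in $W$ are already commensurate with $p$.
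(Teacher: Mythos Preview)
The paper does not prove this lemma. It is stated as a reformulation of Lemma~10.4.6 in \cite{bpr-09-ca}, attributed to Trahtman, with the remark that ``one can follow the proof and observe that it is equivalent.'' So there is no proof in the paper for your proposal to be compared against; the paper, like you, defers to \cite{ant-09-ijm} and \cite{bpr-09-ca}.

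That said, your proposal is not a proof either, and you acknowledge this: the paragraph beginning ``The main obstacle\ldots'' concedes that designing the swap and controlling its global effect on $W$ is the entire content of the lemma, and then points to the same two references. What precedes that paragraph is scaffolding (setting up $W$, choosing an extremal colouring, naming the swap operation), not argument.

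On the scaffolding itself: your extremal framing --- minimise the number of cycle vertices in $W$, then lexicographically maximise the sorted height sequence --- is not how Trahtman or \cite{bpr-09-ca} organise the proof. Their argument is a direct constructive case analysis: take any colouring, locate a tallest tree $T$ with root $r$ on a cycle $C$, examine the predecessor $p$ of $r$ on $C$, and split into cases according to whether $p$ (and nearby vertices on $C$ or in $T$) are bunches. Each case yields an explicit swap producing either a unique tallest tree or a strictly taller tree. Your monovariant scheme is plausible in spirit, but it is not clear that a swap improving your invariants always exists when there are two tallest trees; verifying that would amount to carrying out the same case analysis Trahtman does. In particular, the assertion that failure of (1) and (2) ``guarantees that every sufficiently long directed path in $G$ passes through a non-bunch at which a useful swap is available'' is not established and is not literally how the cited proofs proceed.

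In short: both the paper and your proposal cite the result rather than prove it; your outline repackages the problem in an extremal language that would still require Trahtman's case analysis to complete.
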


With this result, we can prove our generalization of the road colouring theorem.

\begin{proof}[Proof of \cref{thm-bfc-cyc-bunch}]
Let $M = M(G)$. The claim is trivially true if $|V(G)| = |V(M)|$, in which case $G=M$. Suppose that it is true for all $H$ with $|V(H)| \leq_R N$ and $M(H) = M$. Suppose that $|V(G)| = N+1$. If $G$ is bunchy, then we are done. If $G$ is not bunchy, but has two states that can be in-amalgamated, then they are stable for some $\Phi \in \hom_R(G,M)$ by \cref{lemma-amalg-stab}. Let $G' = G/\sim_{\Phi}$. Then $|V(G/\sim_{\Phi})| < |V(G)|$, so by the inductive hypothesis, there is some bunchy $H \leq_S G/\sim_{\Phi} \, \leq_S G$. 

Now, suppose that $G$ does not have two states that can be in-amalgamated---in particular, $G$ does not have two distinct bunches whose outgoing edges have the same target. Since $G$ is not bunchy, it is in particular not a cycle of bunches, so by \cref{lemma-traht-utt}, it admits a total order colouring with unique tallest tree. As remarked above, this total order colouring corresponds to some right-resolver $\Phi \in \hom_R(G,M)$, which has $\sim_{\Phi}$ nontrivial by \cref{lemma-utt-stab}. Then once more $G/\sim_{\Phi}$ is strictly smaller than $G$. We can then apply \cref{prop-bfc-equiv} to conclude that any $\leq_S$-minimal $O$ with $M =M(O)$ is a cycle of bunches. If $O \leq_S G$, then $O=O_{M,p}$ for $p=\per(G)/\per(M)$.
\end{proof}

\section*{Acknowledgments}

The author thanks Brian Marcus and Tom Meyerovitch for their patient, generous advice and supervision; Theo Morrison for a number of insightful questions; the anonymous referees for very careful reading and suggestions; and Mike Boyle for suggesting (to Brian Marcus) a renewed attack on the $O(G)$ problem, shortly after the road problem was solved in 2007. The author was supported through NSERC Discovery Grant EJYR GR010163 (PI: Brian Marcus).

\end{document}